\newtheorem{thm}{Theorem}[section]
\newtheorem{cor}[thm]{Corollary}
\newtheorem{defn}[thm]{Definition}
\newtheorem{lem}[thm]{Lemma}
\newtheorem{prop}[thm]{Proposition}
\newtheorem*{conjecture*}{Conjecture}
\providecommand{\customgenericname}{}
\newcommand{\newcustomtheorem}[2]{%
	\newenvironment{#1}[1]
	{%
		\renewcommand\customgenericname{#2}%
		\renewcommand\theinnercustomgeneric{##1}%
		\innercustomgeneric
	}
	{\endinnercustomgeneric}
}
\theoremstyle{definition}
\theoremstyle{remark}
\newtheorem{rem}[thm]{Remark}
\numberwithin{equation}{section}
\newcommand{\DeclareAutoPairedDelimiter}[3]{%
	\expandafter\DeclarePairedDelimiter\csname Auto\string#1\endcsname{#2}{#3}%
	\begingroup\edef\x{\endgroup
		\noexpand\DeclareRobustCommand{\noexpand#1}{%
			\expandafter\noexpand\csname Auto\string#1\endcsname*}}%
	\x}
\DeclareAutoPairedDelimiter{\abs}{\lvert}{\rvert}
\DeclareAutoPairedDelimiter{\norm}{\lVert}{\rVert}
\DeclareAutoPairedDelimiter{\bra}{(}{ )}
\DeclareAutoPairedDelimiter{\pra}{[}{]}
\DeclareAutoPairedDelimiter{\set}{\{}{\}}
\DeclareAutoPairedDelimiter{\skp}{\langle}{\rangle}
\DeclareMathAlphabet{\mathup}{OT1}{\familydefault}{m}{n}
\newcommand{\dx}[1]{\mathop{}\!\mathup{d} #1}
\DeclareMathOperator{\loc}{loc}
\DeclareMathOperator{\tr}{tr}
\newcommand{\N}{\mathbb{N}}
\newcommand{\R}{\mathbb{R}}
\newcommand{\cA}{\ensuremath{\mathcal A}}
\newcommand{\cC}{\ensuremath{\mathcal C}}
\newcommand{\cH}{\ensuremath{\mathcal H}}
\newcommand{\cR}{\ensuremath{\mathcal R}}
\newcommand{\bV}{\ensuremath{V}}
\definecolor{darkblue}{rgb}{0,0,0.6}
\title[Complete classification of global solutions
 to the obstacle problem]{Complete classification of global solutions\\ to the obstacle problem}
\author{Simon Eberle$^1$}
\address{$^1$Basque Center of Applied Mathematics, Spain}
\email{seberle@bcamath.org}
\author{Alessio Figalli$^2$}
\address{$^2$Department of Mathematics, ETH Zürich, Switzerland}
\email{alessio.figalli@math.ethz.ch}
\author{Georg S. Weiss$^3$}
\address{$^3$Faculty of Mathematics, University of Duisburg-Essen, Germany}
\email{georg.weiss@uni-due.de}
\let\rho\varrho
\let\phi\varphi
\let\epsilon\varepsilon
\begin{document}

\begin{abstract}
The characterization of global solutions to the obstacle problems in $\R^N$, or equivalently of null quadrature domains, has been studied over more than 90 years. In this paper we give a conclusive answer to this problem by proving the following 
long-standing conjecture:
{\it The coincidence set of a global solution to the obstacle problem is either a half-space, an ellipsoid, a paraboloid, or a cylinder with an
ellipsoid or a paraboloid as base.}
\end{abstract}
		\maketitle

\tableofcontents

\section{Introduction}
\subsection{Null quadrature domains and the obstacle problem}
In 1678,  Newton stated his famous \emph{no gravity in the cavity theorem}: spherical shells do not exert gravitational force inside the cavity of the shell. This result was later extended to ellipsoidal shells (homoeoid) first by Laplace, and soon after by Ivory using a more geometric approach.

In modern terms, these results can be stated in terms of null quadrature domains.
We recall that an open set $\Omega\subset \R^N$ is called a {\it null quadrature domain} if
$$
\int_\Omega h\,dx=0
$$
for every harmonic function $h \in L^1(\Omega)\cap C^0(\overline \Omega)$.
With this terminology, the results of Newton, Laplace, and Ivory can be stated saying the complement of a ball/elliposoid is a null quadrature domain.
In greater generality, one can prove that null quadrature domains include:\\
- half-spaces;\\
- exteriors of ellipsoids;\\
- exterior of paraboloids;\\
-  cylinders over domains of the types listed above.\\
A major question, which has been investigated over the last 90 years, is to understand whether this list includes all the possible null quadrature domains.
Before discussing it, it is important to point out that null quadrature domains are rigorously related to solutions to the obstacle problem.
More precisely, as discussed for instance in \cite[Theorem II]{CaffarelliKarpShahgolian_Annals_2000} and \cite[Theorem 4.1]{KarpMargulis_free_boundaries}
{
\begin{multline*}
\Omega \text{ is a null quadrature domain }\quad \Leftrightarrow \quad \\
\text{ $\Omega=\{u>0\}$ for some non-negative solution $u \in C^{1,1}_{\loc}(\R^N)$ of $\Delta u=\chi_{\{u>0\}}$.}
\end{multline*}
}
In other words, characterizing null quadrature domains is equivalent to characterizing the coincidence set $\{u=0\}$ for global solutions to the obstacle problem
\begin{equation}\label{eq:obstacle-global}
\Delta u= \chi_{\{u>0\}}, \quad u \geq 0, \qquad \hbox{in }\R^{N}.
\end{equation}
It is well-known that global solutions to the obstacle problem are convex (see for instance \cite[Theorem 5.1]{PetrosyanShahgholianUraltseva_book}). In particular, the coincidence set $\{u=0\}$ is convex.

\subsection{Classification results}
The first partial classification of global solutions with {\it compact} coincidence sets has been achieved more than 90 years ago: in 1931, Dives \cite{Dive} showed that, for $N=3$, if $\{u=0\}$ has non-empty interior and is bounded then it is an ellipsoid. Many years later, in 1979, Lewy gave a new proof of this result \cite{Lewy79}.

In 1981, M. Sakai gave a full classification of global solutions in \emph{two dimensions} using complex analysis (cf. \cite{Sakai_Null_quadrature_domains}).

The higher dimensional analogue to Dive's result, i.e., if $\set {u=0}$ is bounded and has non-empty interior then it is an ellipsoid, was proved shortly after in two steps. First, DiBenedetto and Friedman proved the result in 1986 under the additional assumption that $\set {u=0}$ is symmetric with respect to $\set {x_j =0}$ for all $j \in \set {1, \dots, N}$ (cf. \cite{DiBenedettoFriedman}). Then, in the same year, Friedman and Sakai \cite{FriedmanSakai} removed the symmetry assumption.
Very recently, in \cite{ellipsoid}, two of the authors gave a very concise proof of the characterization of compact coincidence sets.

Hence, while global solutions with compact coincidence sets have been completely classified, the structure of solutions with \emph{unbounded} coincidence sets remained largely open and is related to the following conjecture (here, one is implicitly assuming that $\{u=0\}$ has non-empty interior, as otherwise solutions are trivially classified, see Remark~\ref{rem:global liouville} below):
\begin{center}
	{\bf Conjecture:} {\it The coincidence set of a global solution to the obstacle problem is either a half-space, an ellipsoid, a paraboloid, or a cylinder with an
ellipsoid or a paraboloid as base.}
\end{center}
This conjecture, which has been investigated over more than {30} years, has been {officially} raised in several papers: first 
by Shahgholian in \cite[conjecture on p. 10]{Shahgholian92_conjecture}, then by Karp and Margulis in \cite[Conjecture 4.5]{KarpMargulis_bounded_sources}, and recently in the monograph `Research Problems in Function Theory' \cite[§3.1 pp. 63-64, and Problem 3.28]{Reseach_problems_in_function_theory}.

In the recent work \cite{esw_arXiv} the first and third author, together with Shahgholian, have been able to solve the conjecture in dimension $N \geq 6$ (see Remark~\ref{rem:6} below). 

Here we are able to fully characterize global solutions in the remaining dimensions (actually, our proof gives a complete characterization for all dimensions $N \geq 3$), allowing us to prove the conjecture above. 

\begin{thm}[Complete characterization of global solutions to the obstacle problem]\label{thm:MainTheorem_Intro_I}
	 
	Let $N \geq 2$, and let  $u$ be a   solution of   \eqref{eq:obstacle-global} 
	such that the coincidence set $\{ u=0\}$ has non-empty interior. 
	Then the coincidence set is is either a half-space, an ellipsoid, a paraboloid, or a cylinder with an
ellipsoid or a paraboloid as base.
\end{thm}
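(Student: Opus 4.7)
The bounded case is settled by \cite{FriedmanSakai, ellipsoid}, so the plan is to treat an unbounded (convex) coincidence set $K := \{u = 0\}$. The first step is an asymptotic analysis: using the uniform $C^{1,1}$ estimates of Caffarelli, extract a blow-down $u_\infty(x) := \lim_{k\to\infty} R_k^{-2} u(R_k x)$, which is a $2$-homogeneous global solution with convex coincidence set. By the classical classification of such homogeneous solutions, either (a) $u_\infty = \tfrac12((x\cdot e_0)_+)^2$ for some unit vector $e_0$, or (b) $u_\infty(x) = \tfrac12 x^{\top} A x$ for some $A \geq 0$ with $\tr A = 1$, in which case $\{u_\infty = 0\}$ is the linear subspace $V := \ker A$. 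Note that $V = \{0\}$ would force $K$ to be bounded, so in case (b) one may assume $V \neq \{0\}$.

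Case (a) is handled by a direct comparison argument: the recession cone of $K$ equals $\{x\cdot e_0 \leq 0\}$, so $K \subseteq \{x\cdot e_0 \leq c\}$ for some $c \in \R$. The function $u(x) - \tfrac12((x\cdot e_0 - c)_+)^2$ is harmonic outside $K$, vanishes with its normal derivative on $\partial K$, and has sub-quadratic growth at infinity; a Liouville/Phragm\'en--Lindel\"of argument, exploiting convexity of $K$, should force it to vanish identically, so that $K$ is a half-space.

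Case (b) is subtler. For each $e \in V$, consider the directional derivative $w_e := \partial_e u$. It is harmonic on $\{u>0\}$, vanishes on $\partial K$ (since $\nabla u = 0$ there), and has trivial blow-down; in particular, it grows strictly sub-linearly. Let $V_0 := \{e \in V : w_e \equiv 0\}$. Then $u$, hence $K$, is invariant under translations by $V_0$, so $K = K_0 \times V_0$ with $K_0 \subseteq V_0^\perp$ a coincidence set for a global obstacle solution in dimension $N - \dim V_0$. Induction on $N$ (with the base case $N=2$ being Sakai's theorem \cite{Sakai_Null_quadrature_domains}) reduces the problem to the non-cylindrical regime $V_0 = \{0\}$, where every $e \in V \setminus \{0\}$ yields a non-trivial $w_e$. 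In this regime one expects $K$ to be a paraboloid with axis spanning $V$: the next-order expansion of $u$ around $u_\infty$ along $V$, controlled via a Weiss-type monotonicity formula together with a Phragm\'en--Lindel\"of principle for sub-linearly growing harmonic functions on convex domains, should force the free boundary $\partial K$ to be a convex quadratic graph in the $V$-direction.

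The main obstacle is this last step, and it is precisely what is missing in the low-dimensional range $3 \leq N \leq 5$ not covered by \cite{esw_arXiv}: the monotonicity-based dimension reduction used for $N \geq 6$ becomes too weak in low dimensions, where the sub-linear correction $w_{e_0}$ need not decay fast enough to be pinned down by a soft compactness argument. Overcoming this will likely require a sharp quantitative Liouville theorem for harmonic functions on paraboloid-like convex domains with vanishing Dirichlet data and mild growth, tailored to extract the exact quadratic profile of $\partial K$ from the sub-linear behaviour of $w_{e_0}$.
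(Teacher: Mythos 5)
Your outline correctly reconstructs the reduction skeleton the paper uses: dispense with the compact case via \cite{FriedmanSakai,ellipsoid}, blow down to identify either a half-space profile or a degenerate quadratic profile with $\ker Q\neq\{0\}$, pass to directional derivatives $\partial_e u$ in null directions of $Q$, factor out the invariance subspace $V_0$ to reduce to a non-cylindrical solution in lower dimension (with $N=2$ handled by Sakai), and reduce the remaining case to classifying monotone solutions. This matches the paper's Section~\ref{sect:proof thm} reasonably closely. However, at the decisive point your argument does not close: in Case (b) with $V_0=\{0\}$ you state that a Weiss-type monotonicity formula together with a Phragm\'en--Lindel\"of principle ``should force'' $\partial K$ to be a quadratic graph, and you yourself observe that precisely this step is missing for $3\le N\le 5$. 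That step \emph{is} Theorem~\ref{thm:main}, which occupies Sections~\ref{section:Newton_potential_expansion_of_u}--\ref{sect:proof 3} of the paper; without it, nothing in the range $3\le N\le5$ is proved. The mechanism the paper actually uses is quite specific and not of the Weiss/Phragm\'en--Lindel\"of type: a generalized Newtonian potential expansion $u=p+V_\cC$, a delicate argument (especially for $N=3$, Lemmas~\ref{lem:sections_are_asymptotically_bounded_by_measure}--\ref{lem:estimate_on_paraboloid_from_outside_in_measure_sense}) showing $\cC$ lies in a paraboloid, a BMO-type control of $V_\cC$ by affine functions, construction of matching paraboloid comparison solutions, and an ACF-monotonicity ordering/dichotomy argument. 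None of these ingredients appear in your sketch, so the gap is substantive rather than a routine technicality.

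A secondary issue: your Case (a) argument is not sound as written. The function $u-\tfrac12\bigl((x\cdot e_0-c)_+\bigr)^2$ is harmonic only where $\chi_{\{u>0\}}=\chi_{\{x\cdot e_0>c\}}$; on $\{u>0\}\cap\{x\cdot e_0\le c\}$ its Laplacian equals $1$, so it is not ``harmonic outside $K$'' unless $K$ already equals the half-slab $\{x\cdot e_0\le c\}$, which is what you are trying to prove. The paper's Case~1 avoids this: after normalizing $0\in\partial\cC$ and $\cC\subset\{x_N\le0\}$, convexity gives $\{g=0\}\subset\cC\subset\{x_N\le 0\}$ with $\{g=0\}$ a half-space through the origin, forcing all three sets to coincide; then $u-g$ is harmonic everywhere and vanishes on a half-space, so $u\equiv g$ by unique continuation, contradicting non-cylindricity. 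You should reorganize Case (a) along these lines (or, equivalently, observe that a half-space blow-down cannot occur for non-cylindrical $u$).
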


As we shall explain later, this result is a rather direct consequence of the classification of monotone solutions. More precisely, the core of this paper is the following:

\begin{thm}[Characterization of monotone solutions] \label{thm:main}
	Let $N\geq 3$ and let $u$ be an $x_N$-monotone solution according to Definition~\ref{def:solution} below. Then $\{u=0\}$ is a paraboloid.
\end{thm}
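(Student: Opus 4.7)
The plan is to exploit the $x_N$-monotonicity to parametrize the free boundary as the graph of a concave function, and then establish rigidity forcing this function to be a quadratic polynomial. By convexity of $\{u=0\}$ together with the monotonicity from Definition~\ref{def:solution}, the coincidence set is the subgraph of an upper semicontinuous concave function $f: \R^{N-1}\to[-\infty,\infty)$, i.e. $\{u=0\}=\{(x',x_N): x_N\le f(x')\}$. The non-emptiness of the interior combined with the $C^{1,1}_{\loc}$ regularity of $u$ reduces matters to the case where $f$ is real-valued and locally $C^{1,1}$. The goal then becomes to show that $f$ is a strictly concave quadratic polynomial.

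The first step I would carry out is an \emph{asymptotic analysis} via blow-downs. Set $u_R(x) := R^{-2} u(Rx)$; standard quadratic growth bounds for global solutions of the obstacle problem yield subsequential $C^{1,1}_{\loc}$ convergence to a 2-homogeneous convex global solution $u_\infty$. Any such $u_\infty$ is either a half-space solution $\frac{1}{2}((x\cdot e)_+)^2$ or a polynomial solution $\frac{1}{2}x^\top A x$ with $A\ge 0$ and $\tr A=1$; the $x_N$-monotonicity of $u$ passes to $u_\infty$. The half-space case would make $\{u=0\}$ itself a half-space and is excluded by the setup of Definition~\ref{def:solution}. In the polynomial case, the condition $\partial_N u_\infty=(Ax)_N\ge 0$ for all $x\in\R^N$ forces the linear function $(Ax)_N$ to vanish identically, so the $N$-th row and column of $A$ vanish; hence $u_\infty(x)=\frac{1}{2}\,x'\cdot A' x'$ for some positive semidefinite $(N-1)\times(N-1)$ matrix $A'$ with $\tr A'=1$. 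This matrix $A'$ is the expected quadratic form of the target paraboloid's cross-section.

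The core of the argument is then the \emph{rigidity step}: show that this leading-order asymptotic determines $u$ uniquely, namely as the paraboloid solution whose coincidence set is $\{x_N\le -\frac{1}{2}x'\cdot A' x'+c\}$ for an appropriate constant $c$. Natural tools include the non-negative harmonic function $w:=\partial_N u$, which is harmonic in the open set $\{u>0\}$ and vanishes on the free boundary $\{x_N=f(x')\}$; boundary Harnack and Phragm\'{e}n--Lindel\"{o}f-type analysis on the NTA-type epigraph $\{u>0\}$ (Lipschitz since $f$ is concave and $C^{1,1}$); and a Weiss/Monneau-type monotonicity formula adapted to the paraboloid blow-down. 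One expects to match $w$, or a renormalized difference between $u$ and a candidate paraboloid solution, with the explicit function occurring on the exact paraboloid, thereby producing enough constraints to force $f$ to be exactly quadratic.

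The main obstacle is precisely this rigidity step in dimensions $N\in\{3,4,5\}$. In the companion paper \cite{esw_arXiv}, the analogous statement for $N\ge 6$ was obtained via decay and integrability estimates for the Newton kernel that degrade or outright fail in low dimensions. The key new ingredient must therefore be a dimension-independent bridge from the leading-order asymptotic to the exact paraboloid, most likely via a carefully chosen auxiliary quantity (for instance, a second-order tangential derivative of $u$, or a renormalized difference between $u$ and a candidate paraboloid solution) for which a Liouville-type theorem can be established uniformly in $N\ge 3$, without appeal to tail estimates that depend critically on the dimension.
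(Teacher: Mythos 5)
Your first paragraph (graph parametrization of $\partial\cC$ as a concave function) and your blow-down analysis are correct, but they essentially reproduce what is already built into Definition~\ref{def:solution}\eqref{PDE_asymptotics}; the substance of the theorem lies entirely in what you call the ``rigidity step,'' and there your proposal stops at the level of a wish list. Naming boundary Harnack, Phragm\'en--Lindel\"of, a Weiss/Monneau formula, or ``a Liouville-type theorem'' does not constitute an argument: none of these is shown to apply to the unbounded paraboloid geometry at hand, and the proposal never actually derives the constraint that would force $f$ to be exactly quadratic. Moreover, the quantity you single out, $w:=\partial_N u$, has the wrong sign (by Definition~\ref{def:solution}\eqref{eq:monotone} it is $\le 0$), and while its subharmonic parts do feed into an ACF-type argument, by itself a one-sided harmonic function on the epigraph of $f$ does not yield rigidity of the cross-section.

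The concrete missing content is a mechanism to compare $u$ with an \emph{exact} paraboloid solution across all scales. The paper does this by first proving the expansion $u=p+V_{\cC}$ for a carefully renormalized (generalized) Newtonian potential (Proposition~\ref{prop:Newton_potential_expansion}), using this to show that $\cC$ is trapped inside a genuine paraboloid $\{|x'|^2\le \gamma_0 x_N\}$ (Proposition~\ref{prop:C_is_contained_in_paraboloid}; the $N=3$ case is a delicate integral-inequality argument for the section-measure function $H(t)$), then constructing paraboloid solutions $u_{P}$ matching $u$ to affine order (Section~\ref{sec:constructing_the_correct_ellipsoids}), and finally applying the Alt--Caffarelli--Friedman functional to $u-u_{P}$ to force an ordering and hence equality. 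The step you recognize as the obstacle for $N\in\{3,4,5\}$ is genuine, but you do not resolve it: for $N=3$ the difference $u-p$ grows like $R\log R$, so no fixed affine correction can make it sublinear, and the paper needs a scale-dependent matching $u_{P_R}$ together with an ACF dichotomy (Proposition~\ref{prop:ACF-alternative_for_linear_rescaling}) and a fine scale adjustment (Proposition~\ref{prop:calibration_in_the_case_that_tilde_u_is_paraboloid_solution}). None of that machinery appears, even in outline, in your proposal, so as written it is an incomplete plan rather than a proof.
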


\begin{rem}
Thanks to \cite[Main Theorem]{esw_singularities}, Theorem \ref{thm:MainTheorem_Intro_I} implies a very refined result on the behavior of the regular part of the free boundary close to singularities.
\end{rem}

\begin{rem}\label{rem:6}
As mentioned before,
Theorem~\ref{thm:main} has already been proved for $N\geq 6$ in \cite{esw_arXiv}. An important reason for this dimensional restriction comes from the fact that, in the proof in \cite{esw_arXiv}, a key role is played by the Newtonian potential associated to $\{u=0\}$, defined (up to a multiplicative constant) as $\frac{1}{|x|^{N-2}}\ast \chi_{\{u=0\}}$.  However, if $\{u=0\}$ is a paraboloid then the above convolution converges only for $N\geq 6$.

As we shall see later, this definition can be ``corrected'' to obtain a convergent expression also in lower dimensions (see Definition~\ref{NP} and Lemma~\ref{lem:existence_of_newton_potential}). However,  the positivity  of the Newtonian potential is important for the arguments in \cite{esw_arXiv}, while our generalized potential loses this property.

At a more ``fundamental'' level, the role of the dimension can be seen as follows: if {$p(x)=\lim_{r\to \infty}\frac{u(rx)}{r^2}$} denotes the blow-down polynomial appearing in Definition~\ref{def:solution}\eqref{PDE_asymptotics}, then the behaviour of $u-p$ changes considerably with the dimension. In particular, if $\{u=0\}$ is a paraboloid (this is a particular case of $x_N$-monotone solution) then one can check by explicit computations that, for $N\geq 4$, there exists a linear function $\ell$ such that:\\
- $\fint_{B_R}|u-p-\ell| \dx{x}$ is bounded for $N\geq 6$;\\
- $\fint_{B_R}|u-p-\ell| \dx{x} \simeq \log(R)$ for $N=5$;\\
- $\fint_{B_R}|u-p-\ell| \dx{x} \simeq \sqrt{R}$ for $N=4$.\\
This different behavior is the reason for the dimensional restriction $N \geq 6$ in \cite{esw_arXiv}.
In this paper, instead, we develop a new approach that only requires
 $\fint_{B_R}|u-p-\ell| \dx{x}=o(R)$, giving a unified proof of Theorem~\ref{thm:main} for $N\geq 4$. Unfortunately,  in the ``critical'' dimension $N=3$, $\fint_{B_R}|u-p| \dx{x} \simeq R\log R$. In particular, there is no affine function that dictates the behaviour at infinity of $u-p$. 
As the reader will see, this fact is a source of major difficulties for proving Theorem~\ref{thm:main} in dimension $N=3$.

We note that also for $N=2$ the behavior of $u-p$ is superlinear: $\fint_{B_R}|u-p| \dx{x} \simeq R^{3/2}$. However, when $N=2$ one can rely on the Riemann mapping theorem to obtain a short proof of Theorem \ref{thm:MainTheorem_Intro_I} (see \cite{Sakai_Null_quadrature_domains}).
\end{rem}

\subsection{Structure of the paper}\label{section:structure}
In Section~\ref{sec:notation}, we begin by introducing some notation and collecting a series of useful preliminary estimates on solutions to the obstacle problem and on the Alt-Caffarelli-Friedman (ACF) functional, that will play a crucial role in our proof.

In Section~\ref{section:Newton_potential_expansion_of_u} we prove that, for an $x_N$-monotone solution $u$, one can define a Newtonian-type potential $V_\cC$ associated to its coincidence set $\cC=\{u=0\}$ so that the expansion $u=p+V_\cC$ holds.

In Section~\ref{sec:improved_estimate_on_asymptotics_of_C}, thanks to the Newtonian expansion proved in the previous section, we prove that the coincidence set $\cC$ is asymptotically contained inside a paraboloid.
The proof of this result is rather easy in dimension $N \geq 4$, while the case $N=3$ requires an extremely delicate argument.

In Section~\ref{sec:BMO-type_estimate}, we use the result from Section~\ref{sec:improved_estimate_on_asymptotics_of_C} to analyze the asymptotic behavior of $V_\cC$. In particular:
 for $N\geq 4$ we can show that $V_\cC$ behaves at infinity like a linear function (up to sublinear corrections);
 for $N=3$, on each large ball $B_R$, $V_\cC$ 
{is at most $C R$-away from} an affine function whose slope behaves like $\log R$. 
In other words, while for $N\geq 4$ the gradient of $V_\cC$ is essentially bounded, for $N=3$ it has a BMO-type behavior (see Lemma \ref{lem:improvement_of_growth_by_subtracting_affine_linear_function}).

In Section~\ref{sec:constructing_the_correct_ellipsoids}, exploiting the information on $V_\cC$ obtained in the previous section, we can construct matching paraboloid solutions (i.e. solutions that have paraboloids as coincidence sets).
More precisely, for $N\geq 4$ we can find a paraboloid solution $u_P$ such that $u-u_P$ grows sublinearly at infinity.
Instead, for $N=3$, for each $R$ we construct a paraboloid solution $u_{P_R}$ such that 
{$\frac{1}{|B_R|}\|u-u_{P_R}\|_{L^1(B_R)}\le CR$.}

With all this preparatory work, we can then prove our main result.

More precisely, in Section~\ref{sect:proof 4} we focus on the case $N\geq 4$. In that case, applying the ACF formula to the difference between $u$ and suitable translations of the paraboloid solution constructed in Section~\ref{sec:constructing_the_correct_ellipsoids} {and exploiting the sublinear growth at infinity}, we are able to prove that such solutions are ordered. Once this is achieved, we conclude easily.

Instead, in Section~\ref{sect:proof 3} we focus on the case $N=3$. In this case, {due to the lack of a sublinear approximation of $u$ via paraboloid solutions,} we cannot directly apply the ACF formula to deduce that $u$ and some suitable paraboloid solution are ordered.
Instead, we apply the ACF formula to the functions $\frac{1}{R}(u-u_{P_R})(R \,
\cdot)$ (each of which may not satisfy a linear growth bound) in order to construct a comparison solution $u_\infty$ whose coincidence set is a paraboloid.
Then, by a delicate ACF-type dichotomy, we show that one-homogeneous blow-down limits of $u-u_{\infty}$ exist and:\\
- either they have constant sign (so $u$ and $u_\infty$ are ``ordered at infinity'');\\
- or they are linear functions.\\
While in the first case we can conclude similarly to the case $N\geq 4$, 
the second case requires a very refined analysis. More precisely, exploiting the information that  $u-u_{\infty}$ behaves as a linear function at infinity, 
we can construct fine adjustments of the paraboloid solution $u_{\infty}$ to
show that, for some suitable translations of $u_\infty$, the ACF energy vanishes.
Then, we conclude similarly as in the first case.

Finally, in Section~\ref{sect:proof thm}, we provide a new self-contained argument showing how Theorem~\ref{thm:MainTheorem_Intro_I} follows from Theorem~\ref{thm:main}.

\medskip
{\it Acknowledgements.} The second author has received funding from the European Research Council under the Grant Agreement No.
721675 ``Regularity and Stability in Partial Differential Equations (RSPDE)''.

\section{Notation and preliminaries}\label{sec:notation}

Throughout this work, $\R^N$ will be equipped with the Euclidean inner product $x \cdot y$ and the induced norm $\abs{x}$. Due to the nature of the problem, we will often write $x \in \R^N$ as $x= (x',x_N) \in \R^{N-1} \times \R$. Also, we denote by $(e^i)_{1\leq i \leq N}$ the elements of the canonical base of~$\R^N$.

In our estimates, $C$ denotes a generic positive constant that may change from line to line. We shall use $C_N$ whenever the constant depends only on the dimension.

We write $B_r(x)$ to denote the open $N$-dimensional ball of center $x$ and radius $r$, while $B_r'(x')$ is the open $(N-1)$-dimensional ball of center $x' \in \R^{N-1}$ and radius $r$. Whenever the center is omitted, it is assumed to be the origin $0$.

When considering a set $A$, $\chi_A$ shall denote the characteristic function of $A$. With $\cH^{N-1}$, we refer to $(N-1)$-dimensional Hausdorff measure. If $A$ and $B$ are two sets, we denote their symmetric difference by $A \triangle B := (A \setminus B) \cup (B \setminus A)$. Given a function $f: A \subset \R^N \to \R$,  we define $f_+:= \max\{f,0\}$ and $f_- := \max \{ - f,0\}$. Furthermore, we define the differential operator $\nabla'f := (\partial_1 f, \dots, \partial_{N-1}f)$.

\begin{defn}[Coincidence set]\label{def:coincidence_set}
Given a solution $u$ to the obstacle problem \eqref{eq:obstacle-global}, we define its \emph{coincidence set} $\cC$ to be
\begin{align}
\cC := \set {u=0}.
\end{align}
\end{defn}

\begin{rem} \label{rem:convexity_of_coincidence_set_of_global_solution}	As already mentioned before, global solutions to the obstacle problem are convex (see e.g. \cite[Theorem 5.1]{PetrosyanShahgholianUraltseva_book}).
In particular, the coincidence set $\cC$ of a {global} solution is {convex}.
\end{rem}

{
To get compactness of solutions, it is useful to recall that they are uniformly $C^{1,1}$-regular.
Also, as shown by Caffarelli, their blow-down limits with respect to quadratic rescaling are either half-space solutions or quadratic polynomials (see \cite{Caffarelli-revisited}). We summarize these results in the following two lemmas:

\begin{lem}[Characterization of blow-down limits]\label{lem:C11}
Let $u:\R^N\to [0,\infty)$ be a global solution to the obstacle problem.
  Then
  the following convergence holds in $C^{1,\alpha}_{\operatorname{loc}}(\R^N)$ for each $\alpha \in (0,1)$:
$$
\lim_{r\to \infty}\frac{u(rx)}{r^2}=\left\{
\begin{array}{ll}
\frac12 \max(x\cdot e,0)^2&\text{for some }e \in \partial B_1,\\
\\
\frac12 x^TQx&\text{for some } Q\in \R^{N\times N} \text{ symmetric},\\
&\qquad \text{ nonnegative definite, satisfying }tr(Q)=1.
\end{array}
\right.
$$
A global solution of the form 
$\frac12 \max(x\cdot e,0)^2$ is called {\em half-space solution}.
\end{lem}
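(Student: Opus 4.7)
The plan is to follow the classical Caffarelli blow-down strategy, based on $C^{1,1}$ compactness, Weiss monotonicity, and a classification of $2$-homogeneous convex global solutions. First I would note that, by the well-known interior $C^{1,1}$ theory for the obstacle problem, any global solution $u$ satisfies $\|D^2 u\|_{L^\infty(\R^N)} \le C_N$; scaling invariance of the equation $\Delta u_r = \chi_{\{u_r>0\}}$ (where $u_r(x):= u(rx)/r^2$) then gives the same bound uniformly in $r$. Arzel\`a--Ascoli yields subsequential limits $u_{r_k} \to u_\infty$ in $C^{1,\alpha}_{\loc}$ for every $\alpha \in (0,1)$. In the limit, $u_\infty \ge 0$ and is convex; moreover, the convexity of the coincidence sets $\{u_r=0\}$ implies their boundaries carry no Lebesgue mass, so $\chi_{\{u_r>0\}} \to \chi_{\{u_\infty>0\}}$ almost everywhere, and hence $u_\infty$ is itself a convex global solution.

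Next I would show that $u_\infty$ is $2$-homogeneous via the Weiss monotonicity formula
\[
W(r;u) := r^{-N-2}\int_{B_r}\bigl(|\nabla u|^2+2u\bigr)\,dx - 2\,r^{-N-3}\int_{\partial B_r} u^2\,d\cH^{N-1}.
\]
This is non-decreasing in $r$, and the quadratic growth of $u$ (a consequence of the global $C^{1,1}$ bound and $u\ge 0$) bounds $W$ from above, so $W(r;u) \to W_\infty$ as $r\to\infty$. The scaling identity $W(\rho;u_r) = W(\rho r;u)$ forces $W(\rho;u_\infty) \equiv W_\infty$ for all $\rho>0$, and the equality case of Weiss makes $u_\infty$ $2$-homogeneous. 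To classify such solutions, note that $\cC_\infty := \{u_\infty=0\}$ is a convex cone. If $\cC_\infty$ has empty interior then $\Delta u_\infty = 1$ almost everywhere, and by $2$-homogeneity and convexity $u_\infty(x) = \tfrac12 x^T Q x$ with $Q=Q^T\ge 0$ and $\tr(Q)=1$. If $\cC_\infty$ has non-empty interior, Caffarelli's rigidity argument, applied to the nonnegative $1$-homogeneous harmonic function $\partial_e u_\infty$ (for $e$ pointing into the interior of $\cC_\infty$) which vanishes on $\cC_\infty$ and is positive in $\{u_\infty>0\}$, forces $\cC_\infty$ to be a half-space, giving $u_\infty = \tfrac12 \max(x\cdot e,0)^2$ for some $e\in\partial B_1$.

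Finally, to upgrade subsequential convergence to the full limit claimed in the lemma, I would invoke a Monneau-type monotonicity: for any $2$-homogeneous polynomial solution $p$, the quantity $M(r;u,p) := r^{-N-3}\int_{\partial B_r}(u-p)^2\,d\cH^{N-1}$ is monotone in $r$, and combined with the fact that the Weiss energy values distinguish the half-space and polynomial regimes, this pins down a unique blow-down. The main obstacle I foresee is the classification step for $2$-homogeneous convex solutions with non-trivial coincidence cone: ruling out cones strictly between a line and a half-space is the technical heart of Caffarelli's argument and relies on delicate Liouville-type rigidity for $1$-homogeneous harmonic functions in convex cones. Uniqueness of the blow-down in the half-space case (i.e.\ of the direction $e$) also requires care and can be handled by a similar monotonicity argument applied to directional derivatives.
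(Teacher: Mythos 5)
The paper does not prove this lemma; it states it and points to Caffarelli's ``The obstacle problem revisited'' (the proof environment that follows belongs to Lemma~\ref{lem:compact C1}, not to Lemma~\ref{lem:C11}). So there is no internal argument to compare against. Your sketch reproduces the classical strategy, and the first two steps are sound: the uniform $C^{1,1}$ bound and scaling invariance give $C^{1,\alpha}_{\loc}$-compact sub-sequential blow-downs; the Weiss functional, non-decreasing and bounded above by the quadratic-growth bound, has a limit $W_\infty$, which forces any sub-sequential limit to be $2$-homogeneous; and the classification of $2$-homogeneous convex global solutions into half-space and polynomial types (via Caffarelli's rigidity argument for $1$-homogeneous non-negative harmonic functions in convex cones) is the right ingredient.

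The gap is in the final step, the uniqueness of the blow-down via Monneau. The exact Monneau derivative identity is
\begin{equation*}
\frac{d}{dr}M(r;u,p) \;=\; \frac{2}{r}\bigl[W(r;u) - \alpha_N\bigr] \;+\; \frac{2}{r}\int_{\{u_r=0\}\cap B_1} p\,dx ,
\qquad \alpha_N := W(1;p),
\end{equation*}
for any $2$-homogeneous polynomial $p\ge 0$ with $\Delta p = 1$. In the \emph{blow-up} regime at a singular point one has $W(r;u)\ge\alpha_N$, so both terms are non-negative and $M$ is non-decreasing; this is the classical Monneau monotonicity. In the \emph{blow-down} regime, Weiss monotonicity gives instead $W(r;u)\le W(\infty;u)=\alpha_N$, so the first term is non-positive while the second term $\frac{2}{r}\int_{\{u_r=0\}\cap B_1}p\,dx$ stays non-negative. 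The two contributions compete and $M$ is not a priori monotone in either direction, so the Monneau argument as you stated it does not yield uniqueness of the blow-down. (Pushing the observation that both $|W(r;u)-\alpha_N|$ and $\int_{\{u_r=0\}\cap B_1}p$ tend to $0$ also does not help, since $\int^\infty dr/r$ diverges.) A correct proof of blow-down uniqueness requires a genuinely different device; moreover, as you acknowledge, the half-space case needs its own argument — there the natural route is to use that the blow-down of the convex set $\{u=0\}$ (translated so that $0\in\{u=0\}$) is its recession cone, hence a single well-defined convex cone. Both points should be worked out rather than sketched if you want a self-contained proof instead of the paper's citation.
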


\begin{lem}[Uniform regularity and compactness]
  \label{lem:compact C1}
The following regularity and compactness properties hold:
  \begin{enumerate}[(i)]
  \item Let $u$ be a global solution to the obstacle problem in $\R^N$ such that $u(0)=0$. Then $\norm{D^2u}_{L^\infty(\R^N)}\leq C_N$. \label{item:C11}
     \item \label{item:compact}
Let $(u_k)_{k \in \mathbb N}$ be a sequence of global solutions to the obstacle problem in $\R^N$ that vanish at the origin.
Then there exists a subsequence $(u_{k_j})_{j \in \mathbb N}$ converging to a global solution $u_0$ in $C^{1,\alpha}_{\rm loc}(\R^N)$ for each $\alpha \in (0,1)$. In addition, $\chi_{\{u_{k_j}=0\}} \to \chi_{\{u_0=0\}}$ a.e. in $\R^N$.
  \end{enumerate}
\end{lem}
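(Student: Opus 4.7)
The proof splits into two essentially independent parts. For part~(i), the goal is a universal $C^{1,1}$-bound. Since $u\ge 0$ is convex (cf.\ Remark~\ref{rem:convexity_of_coincidence_set_of_global_solution}) and $u(0)=0=\min u$, we have $\nabla u(0)=0$. If $u\equiv 0$, the bound is trivial, so I may assume the free boundary $\partial\{u>0\}$ is nonempty. The plan is to invoke Caffarelli's universal quadratic-growth estimate at free-boundary points: for every $x_0\in\partial\{u>0\}$, one has $u(x_0+y)\le C_N|y|^2$ for every $y\in\R^N$. This is obtained by the classical comparison of $u$ with the barrier $|y|^2/(2N)$ on balls around $x_0$. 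Picking $x_0$ either equal to $0$ (if $0$ is on the free boundary) or on the segment joining $0$ to a chosen free-boundary point, and exploiting $u(0)=\nabla u(0)=0$ together with the convexity of $u$, I can transfer the bound to the origin and obtain $u(x)\le C_N|x|^2$ globally. With this universal quadratic bound in hand, Caffarelli's interior $C^{1,1}$-estimate $\|D^2u\|_{L^\infty(B_R)}\le C_N(R^{-2}\|u\|_{L^\infty(B_{2R})}+1)$, applied on each ball $B_R$, yields $\|D^2u\|_{L^\infty(\R^N)}\le C_N$.

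For part~(ii), once~(i) is established, each $u_k$ has a uniform bound on $D^2u_k$, and the normalization $u_k(0)=\nabla u_k(0)=0$ (the latter by convexity) gives $0\le u_k(x)\le C_N|x|^2$ and $|\nabla u_k(x)|\le C_N|x|$ uniformly in $k$. Arzelà--Ascoli then produces a subsequence $u_{k_j}\to u_0$ in $C^{1,\alpha}_{\rm loc}(\R^N)$ for every $\alpha\in(0,1)$, and the limit $u_0$ is non-negative, convex, and vanishes at the origin.

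The remaining and delicate task is to verify that $u_0$ solves \eqref{eq:obstacle-global} and that $\chi_{\{u_{k_j}=0\}}\to\chi_{\{u_0=0\}}$ almost everywhere. Both claims reduce to showing $\chi_{\{u_{k_j}>0\}}\to\chi_{\{u_0>0\}}$ a.e., after which the PDE for $u_0$ follows by passing to the distributional limit in $\Delta u_{k_j}=\chi_{\{u_{k_j}>0\}}$. I split $\R^N$ into three regions. On the open set $\{u_0>0\}$, uniform convergence gives $u_{k_j}>0$ eventually. On the interior of $\{u_0=0\}$ I fix a ball $B_r(x)\subset\{u_0=0\}$; Caffarelli's universal non-degeneracy $\sup_{B_\rho(y)}u_{k_j}\ge c_N\rho^2$ at any free-boundary point $y$ prevents $\partial\{u_{k_j}>0\}$ from intersecting $B_{r/2}(x)$ for large $j$, since this would contradict $u_{k_j}\to 0$ uniformly on $B_r(x)$; on the other hand, the alternative $u_{k_j}>0$ throughout $B_{r/2}(x)$ would force $\Delta u_{k_j}=1$ there and, in the limit, $\Delta u_0=1$, contradicting $u_0\equiv 0$. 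Hence $u_{k_j}\equiv 0$ on $B_{r/2}(x)$ eventually. Finally, since $\{u_0=0\}$ is convex, its topological boundary $\partial\{u_0>0\}$ has zero Lebesgue measure, so the remaining region contributes nothing.

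The main obstacle is precisely this case-analysis on the limiting coincidence set in~(ii): a purely soft compactness argument would leave the equation for $u_0$ ambiguous on $\partial\{u_0>0\}$, and this is resolved by combining the uniform $C^{1,1}$-bound of~(i) with the universal non-degeneracy estimate from classical obstacle-problem theory.
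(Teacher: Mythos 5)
Your proof is correct and, at the level of ideas, takes the same route as the paper: the paper simply cites \cite[Theorem~2.1, Theorem~2.3, Proposition~3.17]{PetrosyanShahgholianUraltseva_book} for (i) and (ii), whereas you unpack the contents of those references. Specifically, your part~(i) reproduces the standard chain ``quadratic growth at a free-boundary point $\Rightarrow$ global quadratic growth from the zero $0\in\{u=0\}$ via convexity $\Rightarrow$ scaled interior $C^{1,1}$-estimate,'' and your part~(ii) reproduces the stability of contact sets that PSU~Prop.~3.17 packages: uniform convergence handles $\{u_0>0\}$, non-degeneracy excludes nearby free boundary in the interior of $\{u_0=0\}$, the alternative $\Delta u_{k_j}\equiv1$ is ruled out by passing to the limit, and $\partial\{u_0=0\}$ is Lebesgue-null because $\{u_0=0\}$ is convex (the limit of convex functions). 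The one spot that should be phrased more carefully is the ``transfer to the origin'' in part~(i): the free-boundary point $x_0$ must be chosen \emph{depending on the target point $x$}, namely as the exit point of the segment $[0,x]$ from the convex set $\{u=0\}$, so that $|x-x_0|\le|x|$ and hence $u(x)\le C_N|x-x_0|^2\le C_N|x|^2$; as written (``$x_0$ on the segment joining $0$ to a chosen free-boundary point'') the quantifier order is ambiguous, and a fixed $x_0$ would only give $u(x)\le C_N(|x|^2+|x_0|^2)$. With that clarification, the argument is complete and faithful to the paper's (cited) proof.
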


\begin{proof}
The fact that $\norm{D^2u}_{L^\infty(\R^N)}$ is bounded by a dimensional constant is a consequence of the uniform quadratic growth of the solution
  (see \cite[Theorem 2.1]{PetrosyanShahgholianUraltseva_book}) combined with regularity
  estimates for harmonic functions inside the open set $\{ u>0\}$, as done for instance in the proof of \cite[Theorem 2.3]{PetrosyanShahgholianUraltseva_book}.
  This proves (i).
  
Concerning (ii), we note that the compactness in $C^{1,\alpha}_{\rm loc}(\R^N)$ is a direct consequence of (i) and Ascoli-Arzel\`a Theorem. The a.e. convergence of the characteristic functions of the contact sets follows  from \cite[Proposition 3.17(i)-(ii)]{PetrosyanShahgholianUraltseva_book}. 
\end{proof}
}
\begin{rem}
\label{rem:global liouville}
As noted in the previous proof,
global solutions grow at most quadratically at infinity (cf. \cite[Theorem 2.1]{PetrosyanShahgholianUraltseva_book}).
Also, if the convex set $\{u=0\}$ has empty interior, then $\Delta u \equiv 1$. Hence, Liouville's theorem implies that the only global solutions whose coincidence sets have empty interior are quadratic polynomials.
\end{rem}

Within the class of global solutions to the obstacle problem, we now introduce some terminology for denoting some special solutions

\begin{defn}[Cylindrical solutions] \label{def:cylindrical}
We say that a global solution to the obstacle problem is \emph{cylindrical} if there exists $e \in \partial B_1$ such that
\begin{align}
	\nabla u \cdot e \equiv 0 \quad \text{ in } \R^N.
\end{align}
\end{defn}

A useful criterion for being a cylindrical solution is contained in the following:
\begin{lem}
\label{lem:infinite line}
Let $u$ be a global solution, and assume that its coincidence set $\cC$ contains an infinite line. Then $u$ is constant in the direction of that line.
\end{lem}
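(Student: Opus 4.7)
The plan is to exploit the convexity of $u$ (Remark~\ref{rem:convexity_of_coincidence_set_of_global_solution}) together with the fact that $u\equiv 0$ on the line $L\subset\cC$. The key idea is that for any point $y^*\in\R^N$, I can construct, via convex combinations of $y^*$ and points of $L$, a line $L^*$ through $y^*$ parallel to $L$ on which $u$ is bounded above by a finite constant. Since a convex function on $\R$ that is bounded above is necessarily constant, $u$ restricted to $L^*$ will be constant, giving $\partial_e u(y^*)=0$.

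More concretely, write $L=\{z_0+te: t\in\R\}$ for some $z_0\in L$. Given $y^*\in\R^N$, I fix some $\lambda\in(0,1)$ and set $y := (y^*-\lambda z_0)/(1-\lambda)$, so that $(1-\lambda)y + \lambda z_0 = y^*$. As $z = z_0 + te$ ranges over $L$, the point
\[
(1-\lambda)y + \lambda z \;=\; y^* + \lambda t\, e
\]
sweeps out precisely the line $L^*$ through $y^*$ in direction $e$. By the convexity of $u$ and $u|_L\equiv 0$,
\[
u\bigl((1-\lambda)y + \lambda z\bigr) \;\le\; (1-\lambda)\, u(y) + \lambda\, u(z) \;=\; (1-\lambda)\, u(y),
\]
which bounds $u$ on all of $L^*$ by the finite constant $(1-\lambda)u(y)$.

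Now the restriction of $u$ to the line $L^*$ is a convex function of one real variable that is bounded above; by a standard one-dimensional fact (if such a function were not constant, its difference-quotient slopes would force it to grow to $+\infty$ in one direction, contradicting boundedness), it must be constant. Hence $\partial_e u(y^*)=0$, and since $y^*$ was arbitrary and $u\in C^{1,1}$ (Lemma~\ref{lem:compact C1}), this gives $\nabla u\cdot e\equiv 0$ in $\R^N$, completing the proof. There is no serious obstacle here; the only point to be aware of is that $y$ may lie far away as $\lambda\to 1^-$ (so $u(y)$ may be large), but any single choice $\lambda\in(0,1)$ produces a finite upper bound on $u|_{L^*}$, which is all that is required.
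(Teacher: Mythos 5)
Your proof is correct and takes a genuinely different, more elementary route than the paper. The paper's argument first disposes of the degenerate case where $\cC$ has empty interior via Remark~\ref{rem:global liouville}, then uses the convexity of $\cC$ to establish the product structure $\cC=\mathcal K\times\R$, and finally argues that $u-u(\cdot+\sigma e^N)$ is harmonic and vanishes on a set with non-empty interior, invoking unique continuation to conclude $u\equiv u(\cdot+\sigma e^N)$. You instead exploit the convexity of the \emph{solution} $u$ itself (the same Theorem 5.1 of \cite{PetrosyanShahgholianUraltseva_book} cited in Remark~\ref{rem:convexity_of_coincidence_set_of_global_solution}, though that remark only extracts from it the convexity of $\cC$): by a convex-combination construction you bound $u$ above on every line parallel to $L$, and then the elementary fact that a convex function on $\R$ bounded above must be constant finishes the argument. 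This avoids unique continuation entirely, needs no case distinction on the interior of $\cC$, and does not even require $C^{1,1}$-regularity (in fact, your closing remark that $u\in C^{1,1}$ is needed to pass from constancy on each line $L^*$ to $\nabla u\cdot e\equiv 0$ is superfluous, since constancy in the $e$-direction is already the statement to be proved). The paper's proof, on the other hand, is a robust pattern that also works without convexity of the solution (e.g.\ for generalizations with non-constant right-hand side) as long as the contact set is invariant along the line and has interior — but in the present setting your argument is the cleaner one.
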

\begin{proof}
First of all we may assume that $\cC$ has non-empty interior, as otherwise $u$ is a non-negative quadratic polynomial (see Remark~\ref{rem:global liouville}) and the result follows easily.

Since $\cC$ is convex, the assumption of containing a line implies that $\cC$ is a product, namely there exists a system of coordinates such that $\cC=\mathcal K\times \R$ for some convex set $\mathcal K\subset \R^{N-1}.$\footnote{This classical fact can be proved as follows. Assume that the line $\ell$ is parallel to $e^N$, that $\ell=\{\bar x+s e^N:s \in \R\}$ for some $\bar x \in \R^N$, and define $K_\tau:=\cC\cap \{x_N=\tau\}$. 
Let ${\rm conv}(A)$ denote the convex hull of the set $A$. Then, by convexity of $\cC$, ${\rm conv}(K_\tau\cup \ell)\subset \cC$ for any $\tau \in \R$. Since ${\rm conv}(K_\tau\cup \ell)=K_\tau \times \R$, it follows that  $\cC\supset \mathcal K\times \R$ with 
{$ \mathcal K\times \R:=\cup_\tau \left(K_\tau\times \R\right)$.} On the other hand, it is clear by construction that $\cC\subset \mathcal K\times \R$, so the result follows.}
Hence, given $\sigma \in \R$, the global solution $u_\sigma(x):=u(x+\sigma e^N)$ has the same contact set as $u$, and therefore $\Delta (u-u_\sigma)\equiv 0$.
Since $u-u_\sigma$ vanishes on $\cC$ which has non-empty interior, it follows by unique continuation that $u-u_\sigma\equiv 0$.
Since $\sigma$ is arbitrary, this shows that $u$ is invariant in the $e^N$-direction, proving the result.
\end{proof}

\begin{defn}[$x_N$-monotone solutions] \label{def:solution}
We say that a global solution to the obstacle problem \eqref{eq:obstacle-global} is \emph{$x_N$-monotone} if:
	\begin{enumerate}[(i)]
		\item \label{item:cC_has_nonempty_interior} $\cC$ has non-empty interior;
		\item \label{eq:conincidence_set_of_u_is_on_the_right} $\cC \subset \set{x_N \geq 0}$ and $0 \in \partial \cC$;
		\item \label{eq:monotone} 
{$\partial_Nu \leq 0$ in $\R^N$;}
		\item  \label{PDE_asymptotics}
		$\frac{u(rx)}{r^2} \to \frac12 x'^T Q x' =:p(x')$ in $C^{1,\alpha}_{\operatorname{loc}}(\R^N)$ as $r \to \infty$, $\alpha \in (0,1),$
		where $x=(x',x_N)$, $Q  \in \R^{(N-1) \times (N-1)}$ is symmetric, positive definite, and satisfies $\tr(Q) =  1$. 
	\end{enumerate}
\end{defn}

\begin{rem} \label{rem:coincidence_set_contains_ray}
Thanks to Definition \ref{def:solution}\eqref{eq:conincidence_set_of_u_is_on_the_right}-\eqref{eq:monotone}, if $u$ is  \emph{$x_N$-monotone} then $\{ t  e^N : t \geq 0   \} \subset \cC$.
Also, since the matrix $Q\in \R^{(N-1) \times (N-1)}$ in Definition \ref{def:solution}\eqref{PDE_asymptotics} is positive definite, there exists a constant $c_p>0$  such that 
		\begin{align} \label{eq:def_of_c_p}
		p(x') \geq c_p \abs {x'}^2 \qquad \text{ for all }x' \in \R^{N-1}.
		\end{align}
\end{rem}

The following important result on $x_N$-monotone solutions is proved in  \cite{esw_arXiv}:
\begin{lem}[$\cC$ is ``almost contained'' in a paraboloid]
\label{lem:prop5.1}
Let $N\geq 3$, and let $u$ be an $x_N$-monotone solution.
Fix $\delta\in (0,1)$, and define $T_\delta:=\left\{(y',y_N) \in \R^N: |y'|^2 < y_N^{1 + \delta}\right\}$.
Then there exists a radius ${\hat r}>1$ such that 
\begin{equation}
\label{eq:cC contained Tdelta}
\cC\setminus B_{2\hat r}\subset \{y_N>{\hat r}\}\cap T_\delta.
\end{equation}
\end{lem}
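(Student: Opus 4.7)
The plan is to combine the monotonicity structure with the blow-down asymptotics via a contradiction argument, closely following the strategy of \cite{esw_arXiv}.

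First, using $\partial_N u\leq 0$ and $u\geq 0$, the coincidence set $\cC$ is closed upward in the $e^N$ direction: $(y',y_N)\in\cC$ and $s\geq y_N$ imply $0\leq u(y',s)\leq u(y',y_N)=0$, whence $(y',s)\in\cC$. Combined with convexity (Remark~\ref{rem:convexity_of_coincidence_set_of_global_solution}), $\cC\subset\{x_N\geq 0\}$, and $0\in\partial\cC$ from Definition~\ref{def:solution}\eqref{eq:conincidence_set_of_u_is_on_the_right}, this yields the epigraph representation
\[
\cC=\bigl\{(y',y_N)\in\R^N:\,y_N\geq f(y')\bigr\}
\]
for some lower-semicontinuous convex $f:\R^{N-1}\to[0,+\infty]$ with $f(0)=0$. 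The inclusion $\cC\setminus B_{2\hat r}\subset\{y_N>\hat r\}\cap T_\delta$ then splits into: (a) $y_N>\hat r$ on $\cC\setminus B_{2\hat r}$; and (b) the growth bound $f(y')\geq|y'|^{2/(1+\delta)}$ for $|y'|$ large.

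For (a), the uniform convergence $u(rx)/r^2\to p(x')$ on $\overline{B_2}$ from Definition~\ref{def:solution}\eqref{PDE_asymptotics}, together with $p(x')\geq c_p|x'|^2$ from \eqref{eq:def_of_c_p}, implies that for any $\eta\in(0,1)$ there exists $R_\eta$ such that any $y\in\cC$ with $|y|\in[R_\eta,2R_\eta]$ satisfies $|y'|\leq\eta|y|$. Applied with $\eta=1/2$ and iterated dyadically, one obtains $y_N\geq|y|/2>\hat r$ whenever $|y|>2\hat r$, provided $\hat r$ is chosen large. This already yields $f(y')/|y'|\to\infty$, i.e., the qualitative superlinearity of $f$.

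For (b), I would argue by contradiction: assume there is a sequence $y_k'$ with $|y_k'|\to\infty$ and $f(y_k')^{1+\delta}\leq|y_k'|^2$, and set $z_k:=(y_k',f(y_k'))\in\partial\cC$ and $R_k:=|z_k|\to\infty$. The rescalings $u_{R_k}(x):=u(R_k x)/R_k^2$ converge to $p$ in $C^{1,\alpha}_{\loc}$, and $\tilde z_k:=z_k/R_k\to e^N$. The plan is then to exploit (i) that $\cC$ contains the full convex hull of $z_k$ and the positive $x_N$-half-axis, (ii) the uniform $C^{1,1}$ bound (Lemma~\ref{lem:compact C1}\eqref{item:C11}), and (iii) the strict positivity $p(x')\geq c_p|x'|^2$ off the $x_N$-axis, combined in a bootstrap/iteration near the north pole $e^N$ that produces a quantitative width estimate for $\cC/R_k$ incompatible with the failure of the $T_\delta$ bound.

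The main obstacle is precisely the quantitative content of (b). The blow-down gives only the qualitative $|y'|/|y|\to 0$ without an explicit rate, whereas $T_\delta$ requires the strictly stronger polynomial rate $|y'|=o\bigl(|y|^{(1+\delta)/2}\bigr)$ on $\cC$. A direct isotropic rescaling at scale $|z_k|$ sends $z_k$ to $e^N$ and collapses the relevant ``$|y'|$-growth'' information into an infinitesimal neighborhood of the pole. Bridging this gap---upgrading the qualitative $C^{1,\alpha}_{\loc}$ blow-down into a power-law rate by combining the $C^{1,1}$ bound, strict positivity of $p$ off the axis, compactness of $S^{N-2}$, and the convex epigraph structure of $\cC$---is the technical heart of the lemma.
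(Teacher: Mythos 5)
Your setup, the epigraph representation of $\cC$ via $\partial_N u\le 0$ and convexity, and part~(a) (that points of $\cC$ far from the origin satisfy $y_N\ge |y|/2$) are all correct and essentially immediate from Definition~\ref{def:solution}\eqref{PDE_asymptotics} and \eqref{eq:def_of_c_p}. But part~(b) --- the quantitative polynomial bound $|y'|^2<y_N^{1+\delta}$ on $\cC$ --- is the entire content of the lemma, and you explicitly stop short of proving it, acknowledging that the qualitative $C^{1,\alpha}_{\loc}$ blow-down gives only $|y'|/|y|\to 0$ with no rate. This is a genuine gap, not a detail: a non-increasing function $s\mapsto M(s)/s$ (which you do get from convexity and $0\in\cC$, since $\tfrac{s_1}{s_2}y'\in\cC_{s_1}$ whenever $y'\in\cC_{s_2}$ and $s_1\le s_2$) can still tend to $0$ arbitrarily slowly, so nothing in the listed ingredients (i)--(iii) yet forces a power law. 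A blow-up at $z_k$ at the isotropic scale $|z_k'|$ also runs into the difficulty you note: since $|z_k|/|z_k'|\to\infty$, \cite[Lemma B.2]{esw_arXiv} does not apply to identify the blow-down of the limit, so this rescaling does not obviously close the contradiction.

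For comparison, the paper does not reprove this lemma at all: it simply invokes \cite[Proposition~5.1]{esw_arXiv} and observes that the minor change in normalization of $\cC\cap\{x_N\le 0\}$ does not affect the argument there. So there is no internal proof to compare yours against; the nontrivial quantitative work that you correctly identify as ``the technical heart'' is delegated to that reference. To turn your sketch into a proof you would need a concrete mechanism producing the rate --- e.g., a dichotomy/iteration exploiting nondegeneracy of $u$ near $\partial\cC$ together with the uniform $C^{1,1}$ bound and the classification of blow-downs, rather than the bare $C^{1,\alpha}_{\loc}$ convergence --- which is precisely what \cite[Proposition~5.1]{esw_arXiv} supplies and what your proposal leaves open.
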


\begin{proof}
This result is proved in \cite[Proposition 5.1]{esw_arXiv} assuming that $\cC \cap \set {x_N \leq 0}=\set{0}$ in place of Definition~\ref{def:solution}\eqref{eq:conincidence_set_of_u_is_on_the_right}.
However, it is easy to check that exchanging this assumption with ours does not affect the proof.
\end{proof}

\begin{defn}[Ellipsoids and Paraboloids] \label{def:ellipsoid_and_paraboloid}
	 
We call a set $E \subset \R^N$ \emph{ellipsoid} if, after a translation and a rotation,
\begin{align}
	E = \biggl\{x \in \R^N : \sum \limits_{j = 1}^N \frac{x_j^2}{a_j^2} \leq 1   \biggr\}
\end{align}
for some $a=(a_1,\ldots,a_N) \in (0,\infty)^N$.  We call a set $P \subset \R^N$ a \emph{paraboloid}, 
if, after a translation and a rotation,   
$$
	P = \set { (x',x_N) \in \R^N : x_N\geq 0,\, x' \in \sqrt{x_N} E'   } ,
$$	  
where  $E' $  is an $(N-1)$-dimensional ellipsoid.
\end{defn}

An important role in this paper will be played by the Alt-Caffarelli-Friedman (ACF) functional originally introduced in \cite{ACF84}:
given a function $v:\R^N \to \R$ with $N\geq 2$, we define
\begin{align} \label{eq:definition_of_ACF-functional}
\Phi(v, r) := \frac{1}{r^4} \int \limits_{B_r} \frac{\abs {\nabla v_+}^2}{\abs {x}^{N-2} }  \dx{x} \int \limits_{B_r} \frac{\abs {\nabla v_-}^2}{\abs {x}^{N-2} }  \dx{x}.
\end{align}
We recall in the following lemma some useful facts about the ACF functional.
{
\begin{lem}[Properties of the Alt-Caffarelli-Friedman monotonicity functional]
\label{lem:ACF}
Let $N \geq 2$, and let $v:\R^N\to \R$ be a continuous $W^{1,2}_{\rm loc}$ function such that both $v_+$ and $v_-$ are subharmonic. Then:
\begin{enumerate}[(i)]
\item \label{item:ACF monotone} The functional $\Phi(v,r)$ is finite for each $r>0$, and 
$$
r\mapsto \Phi(v,r) \qquad \text{is  non-decreasing}.
$$
\item \label{eq:bound ACF individual}
The following bound holds for any $r>0$:
$$ \int \limits_{B_r} \frac{\abs {\nabla v_\pm}^2}{\abs {x}^{N-2} }  \dx{x} \leq C_N\biggl(\fint \limits_{B_{4r}}v_\pm\dx{x}\biggr)^2.$$
\item \label{item:ACF L2 bound} The following bound holds for any $r>0$:
$$
\Phi(v,r) \leq \frac{C_N}{r^4}\biggl(\fint \limits_{B_{4r}}v_+\dx{x}\biggr)^2 \biggl(\fint \limits_{B_{4r}}v_-\dx{x}\biggr)^2.$$
\item \label{item:ACF order}
Assume that $\Phi(v,R)\to 0$ as $R\to \infty$. Then either $v \geq 0$ 
{in $\R^N$} or $v \leq 0$ {in $\R^N$}.
\end{enumerate}

\end{lem}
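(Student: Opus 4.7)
The plan is to establish the four assertions in order: (i) is essentially the classical Alt--Caffarelli--Friedman monotonicity formula, (ii) follows from a weighted Caccioppoli estimate, (iii) is immediate from (ii), and (iv) is a short rigidity argument based on the monotonicity in (i).

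For (i), I would follow the original ACF proof. Denoting $A_\pm(r) := \int_{B_r}|\nabla v_\pm|^2/|x|^{N-2}\dx{x}$, so that $\Phi(v,r)=r^{-4}A_+(r)A_-(r)$, I would compute the logarithmic derivative
\begin{align}
\frac{d}{dr}\log\Phi(v,r)=-\frac{4}{r}+\frac{A_+'(r)}{A_+(r)}+\frac{A_-'(r)}{A_-(r)},
\end{align}
with $A_\pm'(r)=r^{2-N}\int_{\partial B_r}|\nabla v_\pm|^2\dH{N-1}$ by coarea. Splitting the gradient on $\partial B_r$ into radial and tangential parts, the proof of non-negativity of this log-derivative reduces to the Friedland--Hayman inequality $\alpha(\omega_+)+\alpha(\omega_-)\ge 2$ for disjoint open sets $\omega_\pm\subset\mathbb{S}^{N-1}$, where $\alpha(\omega)=\sqrt{\tfrac{(N-2)^2}{4}+\lambda_1(\omega)}-\tfrac{N-2}{2}$ is the characteristic exponent associated to the first Dirichlet eigenvalue on $\omega$. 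The subharmonicity of $v_\pm$ ensures that the boundary terms on $\partial\{v_\pm>0\}$ contribute with the correct sign. The finiteness of $\Phi(v,r)$ is a by-product of (ii).

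The main difficulty is (ii), because of the singular weight $|x|^{2-N}$. The approach I would take is to test the weak subharmonicity inequality $\int\nabla v_\pm\cdot\nabla\psi\dx{x}\le 0$ (valid for every $\psi\in W^{1,2}_c$ with $\psi\ge 0$) against $\psi=v_\pm\,\eta^2\,G_\epsilon$, where $G_\epsilon:=\min(|x|^{2-N},\epsilon^{2-N})$ is a bounded superharmonic regularization of the Newtonian kernel and $\eta\in C^\infty_c(B_{2r})$ is a standard cutoff with $\eta\equiv 1$ on $B_r$ and $|\nabla\eta|+r|\nabla^2\eta|\le C/r$. Expanding and integrating by parts yields
\begin{align}
\int\eta^2 G_\epsilon|\nabla v_\pm|^2\dx{x}\le \frac12\int v_\pm^2\biggl[G_\epsilon\Delta(\eta^2)+2\nabla(\eta^2)\cdot\nabla G_\epsilon+\eta^2\Delta G_\epsilon\biggr]\dx{x}.
\end{align}
The superharmonicity $\Delta G_\epsilon\le 0$ (a non-positive measure supported on $\partial B_\epsilon$) lets me discard the last term, while the first two are supported in the annulus $B_{2r}\setminus B_r$, where $G_\epsilon\le r^{2-N}$ and $|\nabla G_\epsilon|\le Cr^{1-N}$. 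A short computation then gives the bound $Cr^{-N}\int_{B_{2r}}v_\pm^2\dx{x}$, which by the mean-value inequality $\sup_{B_{2r}}v_\pm\le C\fint_{B_{4r}}v_\pm\dx{x}$ for non-negative subharmonic $v_\pm$ is controlled by $C_N\bigl(\fint_{B_{4r}}v_\pm\dx{x}\bigr)^2$. Letting $\epsilon\to 0$ by monotone convergence concludes (ii).

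Claim (iii) is immediate by multiplying the two bounds in (ii) and dividing by $r^4$. For (iv), since $\Phi(v,\cdot)$ is non-negative and non-decreasing by (i), the assumption $\Phi(v,R)\to 0$ forces $\Phi(v,r)=0$ for every $r>0$, i.e.\ $A_+(r)A_-(r)=0$ for all $r>0$. Since both $A_\pm$ are non-decreasing in $r$, if we had $A_+(r_1)>0$ and $A_-(r_2)>0$ for some radii, then both would be positive for every $r\ge\max(r_1,r_2)$, yielding $\Phi(v,r)>0$; hence one of them vanishes identically, say $A_+\equiv 0$. Then $\nabla v_+\equiv 0$ in $\R^N$, so $v_+$ is a non-negative constant $c\ge 0$. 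If $c=0$ then $v\le 0$ in $\R^N$; if $c>0$ then necessarily $v\equiv c>0$ in $\R^N$. In either case, $v$ has constant sign, proving (iv).
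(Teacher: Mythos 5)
Your proof is correct and covers all four items; the overall structure matches the paper, but two parts take genuinely different routes. For (ii), the paper factors through a cited inequality from Petrosyan--Shahgholian--Uraltseva (that $\int_{B_r}\frac{|\nabla w|^2}{|x|^{N-2}}\dx{x}\le C_N\fint_{B_{2r}}w^2\dx{x}$ for nonnegative subharmonic $w$) and then combines H\"older with the mean-value inequality, whereas you re-derive that weighted Caccioppoli bound from scratch by testing subharmonicity against $\psi=v_\pm\,\eta^2\,G_\epsilon$ with a truncated Newtonian kernel; your version is more self-contained (and the monotone-convergence passage $\epsilon\to 0$, the sign of $\Delta G_\epsilon$ on $\partial B_\epsilon$, and the annulus estimates are all handled correctly), at the cost of a longer computation. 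For (iv), the paper first reduces to the case of a zero $v(y)=0$ and then uses that whichever of $\nabla v_\pm$ vanishes on $B_r$ forces $v_\pm\equiv 0$ there, concluding by nestedness of the balls; you instead exploit the monotonicity of the individual factors $A_\pm(r)$ to show directly that one of them vanishes for all $r$, which is a slightly cleaner dichotomy and also handles the zero-free case uniformly (ending with $v\equiv c$). For (i) you sketch the classical Friedland--Hayman argument while the paper simply cites \cite[Theorem 2.4]{PetrosyanShahgholianUraltseva_book}; both are acceptable. Item (iii) is identical.
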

\begin{proof}
Usually \eqref{item:ACF monotone} is stated and proved under the extra assumption $v(0)=0$. However, as noted in \cite[Theorem 2.4]{PetrosyanShahgholianUraltseva_book}, this extra condition is not needed and therefore \eqref{item:ACF monotone} holds in our setting.

By subharmonicity of $v_\pm$ and H\"older's inequality,  we can estimate 
\begin{equation}\label{acfeq}
\norm{v_\pm}_{L^2(B_{2r})}^2\leq \norm{v_\pm}_{L^\infty(B_{2r})}\norm{v_\pm}_{L^1(B_{2r})}\quad \text{ and }\quad
\norm{v_\pm}_{L^\infty(B_{2r})}\leq \frac{C_N}{r^{N}} \norm{v_\pm}_{L^1(B_{4r})}.\end{equation}
Also, as noted in \cite[Section 2.2.3]{PetrosyanShahgholianUraltseva_book}, the bound 
$$ \int \limits_{B_r} \frac{\abs {\nabla w}^2}{\abs {x}^{N-2} }  \dx{x} \leq C_N\biggl(\fint \limits_{B_{2r}}w^2\dx{x}\biggr)$$
holds for any nonnegative subharmonic function $w$.
Applying this inequality to $w=v_\pm$ and using \eqref{acfeq}, we obtain 
\eqref{eq:bound ACF individual}.

Multiplying the two estimates in \eqref{eq:bound ACF individual} for $v_+$ and $v_-$, \eqref{item:ACF L2 bound} follows.

To prove \eqref{item:ACF order} we can assume that there is a point $y \in \R^N$ such that $v(y)=0$ as otherwise, by continuity of $v$, either $v>0$ or $v<0$ and the result is trivially true.
By the monotonicity and non-negativity of $\Phi$, our assumption implies that 
$\Phi(v,r)\equiv 0$ for all $r>0$.
Hence, 
by the definition of the ACF functional (cf. \eqref{eq:definition_of_ACF-functional}), for each $r \in (0,\infty)$,
\begin{align}
\text{ either } \quad  \nabla v_+ \equiv 0 \quad \text{ in } B_r \quad \text{ or } \quad  \nabla v_- \equiv 0 \quad \text{ in } B_r.
\end{align}
Since by assumption $v(y)=0$ we deduce that, for all $r>|y|$,
\begin{align}
\text{ either } \quad  v_+ \equiv 0 \quad \text{ in } B_r \quad \text{ or } \quad  v_- \equiv 0 \quad \text{ in } B_r.
\end{align}
Therefore, by continuity,
\begin{align}
\text{ either } v_+ \equiv 0 \quad \text{ in } \bigcup_{r>|y|}B_r = \R^N \quad \text{ or } \quad v_- \equiv 0 \quad \text{ in } \bigcup_{r>|y|}B_r = \R^N,
\end{align}
which proves \eqref{item:ACF order}.
\end{proof}
}
We conclude this section with a couple of simple but important results on the difference of two global solutions.
These results will play a crucial role in the proof of Theorem~\ref{thm:main} where we will apply the ACF functional to the difference of two global solutions.

\begin{lem}[Subharmonicity properties and Caccioppoli estimate]
\label{lem:subharm}
Let $u_1,u_2:\R^N\to \R$ be global solutions to the obstacle problem. Then the following hold:
\begin{enumerate}[(i)]
\item \label{subharm}
The functions $(u_1-u_2)_+$, $(u_1-u_2)_-$ and $|u_1-u_2|$ are subharmonic.
\item {The following bound holds for any $r>0$: \label{cacc}
$$ \fint \limits_{B_r} |\nabla (u_1-u_2)|^2 \dx{x}
\le \frac{C_N}{r^2} \fint \limits_{B_{2r}(x^0)} (u_1-u_2)^2 \dx{x}.
$$}
\end{enumerate}
\end{lem}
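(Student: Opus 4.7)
Set $w := u_1 - u_2 \in C^{1,1}_{\loc}(\R^N)$, using the uniform $C^{1,1}$ regularity provided by Lemma~\ref{lem:compact C1}\eqref{item:C11}. For (i), the key pointwise observation is the following: on the open set $\{w>0\}$ one has $u_1 > u_2 \ge 0$, so $u_1 > 0$ and $\Delta u_1 = 1$, whereas $\Delta u_2 = \chi_{\{u_2>0\}} \le 1$; therefore $\Delta w \ge 0$ a.e.\ on $\{w>0\}$. By the symmetric argument, $\Delta w \le 0$ a.e.\ on $\{w<0\}$.

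To promote this to distributional subharmonicity of $w_+$ on all of $\R^N$, my plan is to approximate $w_+$ by $(w-\epsilon)_+$ for $\epsilon>0$, which is supported in $\{w>\epsilon\}$ and vanishes on its boundary. For any nonnegative test function $\phi \in C_c^\infty(\R^N)$, Green's identity applied on $\{w>\epsilon\}$ gives
\[
\int_{\R^N} (w-\epsilon)_+ \Delta\phi \,dx = \int_{\{w>\epsilon\}} \phi\, \Delta w \,dx - \int_{\partial\{w>\epsilon\}} \phi\, \partial_\nu w \dH{N-1},
\]
and both terms on the right are nonnegative: the bulk term by the a.e.\ inequality above, and the boundary term because the outer unit normal satisfies $\nu = -\nabla w/|\nabla w|$ on the super-level set's boundary, so $\partial_\nu w \le 0$ while $\phi \ge 0$. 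Letting $\epsilon\to 0^+$ and applying monotone convergence yields $\int w_+ \Delta\phi \,dx \ge 0$, i.e., $w_+$ is subharmonic. Applying the same reasoning to $-w$ gives the subharmonicity of $w_-$, and $|w|= w_+ + w_-$ is then subharmonic by linearity.

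For (ii), I would invoke the classical Caccioppoli inequality for nonnegative subharmonic functions, applied separately to $w_+$ and $w_-$: testing $\Delta w_\pm \ge 0$ against $\eta^2 w_\pm$ with a cutoff $\eta\in C_c^\infty(B_{2r}(x^0))$ satisfying $\eta \equiv 1$ on $B_r(x^0)$ and $|\nabla \eta| \le C/r$, and absorbing the cross term via Young's inequality, produces
\[
\int_{B_r(x^0)} |\nabla w_\pm|^2 \,dx \le \frac{C_N}{r^2} \int_{B_{2r}(x^0)} w_\pm^2 \,dx.
\]
Adding the two inequalities and using the a.e.\ identities $|\nabla w|^2 = |\nabla w_+|^2 + |\nabla w_-|^2$ and $w^2 = w_+^2 + w_-^2$ (both consequences of $w_+ w_- \equiv 0$) yields the integrated form of the bound; dividing by $|B_r|$ gives the averaged statement. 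The only mildly delicate point in the whole argument is the smoothness of $\partial\{w>\epsilon\}$ needed for the integration by parts in (i), which is handled by restricting to a.e.\ regular values of $\epsilon$ via Sard's theorem (applicable since $w\in C^{1,1}$); I do not anticipate any serious obstacle beyond this.
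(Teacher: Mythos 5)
Part (ii) is correct and takes a mildly different route from the paper: you run the classical Caccioppoli inequality separately for the nonnegative subharmonic functions $w_\pm$ (testing $\Delta w_\pm\ge 0$ against $\eta^2 w_\pm$) and recombine via $w_+w_-\equiv 0$, whereas the paper integrates the pointwise inequality $\Delta(w^2)\ge 2|\nabla w|^2$ directly against a cutoff and rescales. Both arguments are short and essentially equivalent.

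The proof of (i) has a genuine gap, precisely at the step you yourself flag as ``mildly delicate.'' The pointwise observation ($\Delta w\ge 0$ a.e.\ on $\{w>0\}$) is the same as the paper's, but your route to distributional subharmonicity requires almost every level $\epsilon$ to be a regular value of $w$ so that $\partial\{w>\epsilon\}$ is a $C^1$ hypersurface on which Green's identity and $\nu=-\nabla w/|\nabla w|$ make sense. Sard's theorem is \emph{not} available under the hypothesis $w\in C^{1,1}_{\loc}$ once $N\ge 3$: the sharp smoothness requirement for a scalar map $\R^N\to\R$ (Bates's refinement of Sard) is $C^{N-1,1}$, so $C^{1,1}$ suffices only in dimension $N\le 2$, while the paper needs exactly $N\ge 3$. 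As written you therefore have no control on the set of regular values and no a priori regularity of $\partial\{w>\epsilon\}$. (The integration-by-parts strategy can in principle be salvaged via the coarea formula for Lipschitz functions together with the Gauss--Green theorem for sets of finite perimeter, but that is genuinely different machinery than the Sard argument you invoke.) The paper avoids the issue entirely by smoothing in the range rather than the domain: choose smooth, convex, non-decreasing $\phi_\epsilon$ approximating $(\cdot)_+$ with $\phi_\epsilon\equiv 0$ on $(-\infty,0]$, compute $\Delta[\phi_\epsilon(w)]=\phi_\epsilon'(w)\,\Delta w+\phi_\epsilon''(w)|\nabla w|^2\ge 0$ a.e.\ (the first term is nonnegative because $\phi_\epsilon'(w)>0$ forces $w>0$, the second by convexity), so each $\phi_\epsilon(w)$ is subharmonic, and then pass to the limit $\epsilon\to 0$. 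This sidesteps any question about level-set regularity.
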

\begin{proof}
Set $w:=u_1-u_2$ and note that, since $\Delta u_i=\chi_{\{u_i>0\}}$, 
\begin{equation}
\label{eq:lapl difference}
\Delta w=\chi_{\{u_1>0\}}-\chi_{\{u_2>0\}}\geq 0 \quad \text{ inside }\{u_1>u_2\}.
\end{equation}
Choosing a sequence of smooth convex non-decreasing functions $\phi_\epsilon:\R\to \R$ such that $\phi_\epsilon|_{(-\infty,0)}\equiv 0$ and $\phi_\epsilon(s)\to s_+$ as $\epsilon\to 0$ locally uniformly,
we see that
{
$$
\Delta [\phi_\epsilon(w)]=\phi_\epsilon'(w)\left[\chi_{\{u_1>0\}}-\chi_{\{u_2>0\}}\right]+\phi_\epsilon''(w)|\nabla w|^2 \geq 0.
$$
}
Letting $\epsilon \to 0,$ we conclude that $(u_1-u_2)_+$ is subharmonic.
Since $(u_1-u_2)_-=(u_2-u_1)_+$, by symmetry between $u_1$ and $u_2$ we deduce that $(u_1-u_2)_-$ is subharmonic.
Finally, since $|u_1-u_2|=(u_1-u_2)_+ +(u_1-u_2)_-$, the subharmonicity of $|u_1-u_2|$ follows. This proves \eqref{subharm}.

{
To prove \eqref{cacc} we define $w_r(x):=\frac{w(rx)}{r^2}$ and we note that, 
as a consequence of \eqref{eq:lapl difference}, it holds $w_r \Delta w_r \geq 0$, or equivalently 
\begin{equation}
\label{eq:delta w2}
\Delta (w_r^2)\geq 2|\nabla w_r|^2.
\end{equation}
Now, fix $0\le \eta\in C^{1}_c(B_{2})$ a cut-off function satisfying
$\eta\equiv 1$ in $B_1$.
Integrating the inequality \eqref{eq:delta w2} against $\eta$ we obtain
\begin{align}
\int \limits_{B_{1}} |\nabla w_r|^2 \dx{x} \leq \int \limits_{B_{2}} |\nabla w_r|^2\eta  \dx{x} \leq \frac12 \int_{B_2} w_r^2 \Delta\eta\dx{x}\leq 
 C \int \limits_{B_{2}} w_r^2 \dx{x},
\end{align}
as desired.}
\end{proof}

\begin{rem}
\label{rem:Deu}
As a  direct consequence of Lemma~\ref{lem:subharm}\eqref{subharm} we recover the well-known fact that, given a global solution $u$, $(\partial_e u)_+$ and $(\partial_e u)_-$
are subharmonic for each $e \in \partial B_1$. Indeed, given $h>0$ it suffices to apply Lemma~\ref{lem:subharm}\eqref{subharm} to $u$ and $u(\cdot +he)$ to deduce that both
$\left(\frac{u(\cdot +he)-u}{h}\right)_+$ and $\left(\frac{u(\cdot +he)-u}{h}\right)_-$ are subharmonic, and then the result follows by letting $h\to 0$.
\end{rem}

\begin{lem}[Strong convergence] 
\label{lem:strong_W_1_2_convergence_of_v_R_R}
Let $\rho \in (0,\infty)$ and  let $(u_k)_{k \in \N}$, $(v_k)_{k \in \N}$ be two sequences of global solutions to the obstacle problem in $\R^N$ such that 
\begin{align} \label{eq:weak_convergence_in_the_weak_implies_strong_lemma}
w_k := u_k - v_k \rightharpoonup w \quad \text{ weakly in } W^{1,2}(B_\rho) \text{ as } k \to \infty,
\end{align}
for some harmonic function $w:\R^N\to \R$. Then, for each $\delta \in (0,1)$,
\begin{align}
w_k = u_k - v_k \to w \quad \text{ strongly in } W^{1,2}(B_{\delta \rho}) \text{ as } k \to \infty.
\end{align}
\end{lem}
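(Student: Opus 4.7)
The plan is to establish strong $W^{1,2}(B_{\delta \rho})$ convergence via a Caccioppoli-type energy estimate, exploiting the fact that the source term $\Delta w_k$, although not convergent in any strong sense, is uniformly bounded in $L^\infty$ by a universal constant.

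First I would extract the following preliminary information from the hypotheses. Since $w_k \rightharpoonup w$ in $W^{1,2}(B_\rho)$, the sequence is bounded in $W^{1,2}(B_\rho)$ and, by the Rellich--Kondrachov compactness theorem,
\begin{equation*}
w_k \to w \qquad \text{strongly in } L^2(B_\rho).
\end{equation*}
Moreover, since $u_k$ and $v_k$ solve the obstacle problem and $w$ is harmonic, the identity
\begin{equation*}
\Delta (w_k - w) = \chi_{\{u_k > 0\}} - \chi_{\{v_k > 0\}}
\end{equation*}
holds in the distributional (and a.e.) sense, with $\|\Delta(w_k - w)\|_{L^\infty(\R^N)} \leq 1$. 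By Lemma~\ref{lem:compact C1}\eqref{item:C11} each $w_k$ lies in $W^{2,p}_{\loc}(\R^N)$ for all $p<\infty$, so classical integration by parts is permitted.

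Next, fix a cut-off $\eta \in C_c^\infty(B_\rho)$ with $0 \leq \eta \leq 1$ and $\eta \equiv 1$ on $B_{\delta \rho}$, and test the identity above against $\eta^2(w_k - w)$. Since this test function is compactly supported in $B_\rho$, integration by parts gives
\begin{equation*}
\int \eta^2 |\nabla(w_k - w)|^2 \dx{x} = -\,2\int \eta \,(w_k - w)\,\nabla(w_k - w)\cdot\nabla\eta \dx{x} - \int \eta^2 (w_k - w)\,\Delta(w_k - w) \dx{x}.
\end{equation*}
Applying Young's inequality to the first term on the right, I can absorb half of $\int \eta^2 |\nabla(w_k - w)|^2 \dx{x}$ on the left and obtain
\begin{equation*}
\tfrac12 \int \eta^2 |\nabla(w_k - w)|^2 \dx{x} \leq 2\int |\nabla\eta|^2 (w_k - w)^2 \dx{x} + \int \eta^2 |w_k - w|\,|\Delta(w_k - w)| \dx{x}.
\end{equation*}

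The conclusion then follows by passing to the limit $k \to \infty$ in the right-hand side. The first term vanishes because $w_k \to w$ in $L^2(B_\rho)$ and $|\nabla\eta|$ is bounded; the second term vanishes because $|\Delta(w_k - w)| \leq 1$, $\eta$ is bounded, and $w_k \to w$ in $L^1(B_\rho)$. Therefore $\nabla w_k \to \nabla w$ in $L^2(B_{\delta \rho})$, which together with the strong $L^2$ convergence of $w_k$ yields the claim. The only subtlety is the realization that strong convergence of the source $\Delta w_k$ is not needed: its uniform $L^\infty$ bound, combined with the strong $L^2$ convergence of $w_k - w$ provided by Rellich, is exactly what makes the test-function argument close.
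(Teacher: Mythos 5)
Your proof is correct, and it takes a genuinely different route from the paper. The paper exploits the sign identity $w_k\,\Delta w_k=(u_k-v_k)\bigl(\chi_{\{u_k>0\}}-\chi_{\{v_k>0\}}\bigr)\geq 0$ to drop the term $\int\eta\,w_k\,\Delta w_k$ outright; it then passes the remaining term to the limit, sends a sequence of cutoffs to $\chi_{B_{\delta\rho}}$ to get $\limsup_k\|\nabla w_k\|_{L^2(B_{\delta\rho})}\leq\|\nabla w\|_{L^2(B_{\delta\rho})}$, and concludes from weak convergence plus lower semicontinuity. You instead test $\Delta(w_k-w)$ against $\eta^2(w_k-w)$, absorb the cross-term via Young, and kill the remaining terms using the crude but uniform bound $\|\Delta(w_k-w)\|_{L^\infty}\leq 1$ together with Rellich compactness, obtaining directly that $\|\nabla(w_k-w)\|_{L^2(B_{\delta\rho})}\to 0$. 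Both proofs need Rellich (the paper uses strong $L^2$ convergence of $w_k$ implicitly when passing $\int w_k\nabla w_k\cdot\nabla\eta$ to the limit via strong-times-weak pairing), but your version sidesteps the cutoff-approximation and lower-semicontinuity steps at the cost of invoking the $L^\infty$ control on $\Delta w_k$ instead of its sign; the two hypotheses are equally available here, so this is a legitimate trade.
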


\begin{proof}
First of all note that, for all $k \in \N$,
$$
w_k \Delta w_k=(u_k-v_k)\big(\chi_{\{u_k>0\}}-\chi_{\{v_k>0\}}\big) \geq 0.
$$
Hence, given $\eta \in C_c^\infty(B_\rho; [0,\infty))$ satisfying $\eta \equiv 1$ in $B_{\delta \rho}$, integrating by parts twice we get
\begin{align}
\int \limits_{B_{\delta \rho}} \abs{ \nabla  w_k }^2  \leq \int \limits_{B_{\rho}} \abs{ \nabla  w_k }^2 \eta &= - \int \limits_{B_{\rho}} \left(w_k  \nabla w_k \cdot \nabla \eta + \eta w_k  \Delta w_k\right) \leq - \int \limits_{B_{\rho}} w_k \nabla w_k  \cdot \nabla \eta \\ 
&\quad \to - \int \limits_{B_{\rho}} w \nabla w \cdot \nabla \eta = \int \limits_{B_{\rho}} \abs{ \nabla w}^2 \eta \quad \text{ as } k \to \infty,
\end{align}
where the last equality follows by the harmonicity of $w$.

Now, choosing a sequence $(\eta_j)_{j \in \N} \subset C^\infty_c(B_{\rho}; [0,\infty))$ such that $\eta_j \equiv 1$ in $B_{\delta \rho}$ for all $j \in \N$ and $\eta_j \to \chi_{B_{\delta \rho}}$ pointwise in $B_{\rho}$, we conclude that 
\begin{align}
	\limsup \limits_{k \to \infty} \int \limits_{B_{\delta \rho}} \abs{ \nabla  w_k }^2 \leq \int \limits_{B_{\delta \rho}} \abs{ \nabla w }^2.
\end{align}
Therefore, by the lower-semicontinuity of the Dirichlet energy we deduce that $\norm{\nabla w_k}_{L^2(B_{\delta \rho})} \to \norm{\nabla w}_{L^2(B_{\delta \rho})}$. This convergence of the $L^2$-norm of the gradients together with the weak convergence implies the desired strong convergence.
\end{proof}

%
%




\section{{The Newtonian potential expansion}} \label{section:Newton_potential_expansion_of_u}


As mentioned in Remark~\ref{rem:6}, in \cite{esw_arXiv} a very important role is played by the Newtonian potential associated to the coincidence set of a solution, defined (up to a multiplicative constant) as $\frac{1}{|x|^{N-2}}\ast \chi_{\mathcal C}$. Unfortunately, if $\mathcal C$ is a paraboloid then the above convolution converges only for $N\geq 6$. For this reason we will introduce a generalized Newtonian potential in the spirit of \cite{GNP1}, which will be shown in Lemma \ref{lem:scaling_of_generalized_potential} to be well-defined and to have subquadratic growth.

\begin{defn}[Generalized Newtonian potential]\label{NP}
	 
	Let $N \geq 3$, and define the function
	$$
	G(x,y):= \frac{1}{\abs{x-y}^{N-2}} - \frac{1}{\abs{y}^{N-2}}  - (N-2)\frac{x \cdot y}{ \abs{y}^N} \qquad \text{ for all }x,y \in \R^N. 
	$$
	Given $M\subset \R^N$ measurable, assume that $G(x,\cdot)\chi_M \in L^1(\R^N)$ for each $x \in \R^N$. Then 
	we define the \emph{generalized Newtonian potential} associated to $M$ as
        \begin{align} \label{eq:generalized Newtonian potential}
        	\bV_M(x) := \alpha_N \int \limits_M G(x,y)\dx{y}, \text{ where } \alpha_N:=\frac{1}{N(N-2)|B_1|}.
        \end{align}
\end{defn}

\begin{lem}[Scaling of the generalized Newtonian potential] \label{lem:scaling_of_generalized_potential}
	Let $M \in \R^N$ be a measurable set  for which  $\bV_M$ is well-defined. Then $V_{M}$ satisfies the following scaling law:
	\begin{align}
		\bV_M(\gamma x) = \gamma^2 ~ \bV_{\frac{1}{\gamma} M}(x) \text{ for all } \gamma >0.
	\end{align}
\end{lem}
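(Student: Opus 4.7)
The plan is to reduce the identity to a direct change of variables in the defining integral, exploiting the homogeneity properties of the kernel $G(x,y)$. The key observation is that each of the three terms in $G$ scales homogeneously of degree $-(N-2)$ when both arguments are scaled by the same factor.

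First, I would substitute $y = \gamma z$ in the integral
$$V_M(\gamma x) = \alpha_N \int_M G(\gamma x, y) \, dy,$$
obtaining $dy = \gamma^N \, dz$ and the new domain of integration $\frac{1}{\gamma}M$. Next, I would compute $G(\gamma x, \gamma z)$ term by term: the first term gives $\gamma^{-(N-2)}|x-z|^{-(N-2)}$, the second term gives $\gamma^{-(N-2)}|z|^{-(N-2)}$, and the third term becomes $(N-2)\,\gamma^2(x\cdot z)/(\gamma^N|z|^N) = \gamma^{-(N-2)}(N-2)(x\cdot z)/|z|^N$. Hence $G(\gamma x, \gamma z) = \gamma^{-(N-2)}G(x,z)$, and combining with the Jacobian $\gamma^N$ yields an overall factor $\gamma^{N-(N-2)} = \gamma^2$, which gives the claimed identity.

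The only non-computational issue is to justify that the change of variables is legitimate, i.e.\ that $G(x,\cdot)\chi_{\frac{1}{\gamma}M} \in L^1(\R^N)$ whenever $G(\gamma x, \cdot)\chi_{M} \in L^1(\R^N)$. This however is an immediate consequence of the pointwise identity $G(\gamma x, \gamma z) = \gamma^{-(N-2)} G(x,z)$ just derived, together with the standing assumption that $V_M$ is well-defined (so that $G(\gamma x,\cdot)\chi_M$ is integrable). I expect no significant obstacle: the lemma is essentially a homogeneity bookkeeping statement, and the proof is one short chain of equalities.
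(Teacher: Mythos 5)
Your proof is correct and is essentially identical to the paper's: both verify the homogeneity $G(\gamma x,\gamma z)=\gamma^{2-N}G(x,z)$ (the paper writes it equivalently as $G(\gamma x,y)=\gamma^{2-N}G(x,y/\gamma)$) and then perform the change of variables $y=\gamma z$, picking up the Jacobian $\gamma^N$. The integrability remark at the end is a sensible addition but does not constitute a different route.
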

\begin{proof}
The proof follows from a direct calculation: 
since 
$$
G(\gamma x,y)=\gamma^{2-N}G\left(x,\frac{y}{\gamma}\right)\qquad \text{ for all } \gamma>0,\,x,y\in \R^N, 
$$
\begin{multline}
\bV_M(\gamma x) = \alpha_N \int \limits_M G(\gamma x,y) \dx{y} 
=\gamma^{2-N}  \alpha_N \int \limits_M G\left(x,\frac{y}{\gamma}\right) \dx{y} \\
=\gamma^{2-N}  \alpha_N \int \limits_{\frac{1}{\gamma} M} G(x,z) \gamma^N \dx{z}  =  \gamma^2 ~ \bV_{\frac{1}{\gamma} M}(x).
\end{multline}
\end{proof}

\begin{lem}[Generalized Newtonian potential of $\cC$] \label{lem:existence_of_newton_potential}
	 
Let $N\geq 3,$ and let $u$ be an $x_N$-monotone solution in the sense of Definition \ref{def:solution}. Then
	\begin{enumerate}[(i)]
		\item  \label{item:generalized_potential_is_locally_bounded}
		The generalized potential $\bV_\cC$ of $\cC$ is well-defined and locally bounded.
		 \\
		\item \label{item:subquadratic_growth_of_Newton_potential_of_coincidence_set}
		$\bV_\cC(x)$ grows subquadratically as $\abs {x} \to \infty$. More precisely, there exists a constant $C$ such that
		\begin{align}
			\frac{|\bV_\cC(x)|}{(1+|x|)^{7/4}} \leq C\qquad \text{ for all }x \in \R^N.
		\end{align}
		\item \label{item:W2p VC} $\bV_{\cC} \in W^{2,p}_{\rm loc}(\R^N)$ for each $p\in [1,\infty)$, $\Delta \bV_\cC = - \chi_\cC$, and $\bV_\cC(0)=|\nabla  \bV_\cC(0)|=0$.
	\end{enumerate}
\end{lem}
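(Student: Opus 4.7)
The three parts rest on one structural fact about the kernel: by construction $G(x,y)$ is the second-order Taylor remainder of $x\mapsto |x-y|^{2-N}$ around $x=0$. A direct computation gives $G(0,y)=0$ and $\nabla_x G(0,y)=-(N-2)\bigl(\tfrac{-y}{|y|^N}+\tfrac{y}{|y|^N}\bigr)=0$ for every $y\neq 0$, which yields
\begin{equation*}
|G(x,y)|\leq C\,\frac{|x|^2}{|y|^N}\qquad\text{whenever }|y|\geq 2|x|;
\end{equation*}
for $|y|\leq 2|x|$ the three summands of $G$ individually carry only integrable singularities (at $y=x$, at $y=0$, and at $y=0$, all integrable since $N\geq 3$). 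Combining this with Lemma~\ref{lem:prop5.1}, which places $\cC\setminus B_{2\hat r}$ inside an arbitrarily thin paraboloid $T_\delta=\{|y'|^2<y_N^{1+\delta}\}$ for each $\delta\in(0,1)$, cylindrical integration yields, for every $R\geq 2\hat r$, the convergent tail
\begin{equation*}
\int_{\cC\setminus B_R}\frac{dy}{|y|^N}\lesssim \int_R^\infty t^{(N-1)(1+\delta)/2-N}\,dt\lesssim R^{-(N-1)(1-\delta)/2},
\end{equation*}
finite for every $\delta<1$. This already gives (i), with local boundedness uniform on each compact set.

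\textbf{Part (ii) and the main obstacle.} Fix $R=|x|\gg 1$ and split $\bV_\cC(x)$ into the outer integral over $\cC\setminus B_{2R}$ and the inner one over $\cC\cap B_{2R}$. The outer piece is immediately bounded by $C|x|^2\cdot R^{-(N-1)(1-\delta)/2}=C|x|^{2-(N-1)(1-\delta)/2}$. For the inner piece I estimate each of the three summands of $G$ separately via the thinness $|y'|\leq y_N^{(1+\delta)/2}$: Riesz rearrangement handles the singular term,
\begin{equation*}
\int_{\cC\cap B_{2R}}\frac{dy}{|x-y|^{N-2}}\lesssim |\cC\cap B_{2R}|^{2/N}\lesssim R^{2((N-1)(1+\delta)/2+1)/N},
\end{equation*}
while cylindrical integration gives
\begin{equation*}
\int_{\cC\cap B_{2R}}\frac{dy}{|y|^{N-2}}\lesssim R^{(N-1)(1+\delta)/2-(N-3)},\qquad |x|\int_{\cC\cap B_{2R}}\frac{dy}{|y|^{N-1}}\lesssim R^{(N-1)(\delta-1)/2+2}.
\end{equation*}
The main obstacle is the delicate balance in the critical dimension $N=3$, where the cross-section $y_N^{(1+\delta)/2}$ is only marginally better than $y_N^{1/2}$: here the four exponents collapse to $1+\delta$, $2(2+\delta)/3$, $1+\delta$, and $1+\delta$ respectively, the Riesz exponent dominates, and the explicit choice $\delta=5/8$ makes it exactly $7/4$ (with the other three strictly smaller). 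In dimensions $N\geq 4$ every exponent is comfortably below $7/4$ as soon as $\delta$ is small.

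\textbf{Part (iii).} To identify $\Delta\bV_\cC$ I approximate $\bV_\cC$ by truncations $\bV^k(x):=\alpha_N\int_{\cC\cap B_k}G(x,y)\,dy$. Since $\cC\cap B_k$ is bounded, the three summands of $G$ are individually integrable, so
\begin{equation*}
\bV^k(x)=\alpha_N\int_{\cC\cap B_k}\frac{dy}{|x-y|^{N-2}}+a_k+b_k\cdot x
\end{equation*}
for some constants $a_k\in\R$ and $b_k\in\R^N$; because $-\Delta_x|x-y|^{2-N}=N(N-2)|B_1|\delta_y$ and the affine correction is harmonic, one has $\Delta\bV^k=-\chi_{\cC\cap B_k}$ classically. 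The tail estimate of (i) shows that $\bV^k\to\bV_\cC$ uniformly on compact sets, hence $\Delta\bV_\cC=-\chi_\cC$ in $\mathcal{D}'(\R^N)$; Calder\'on--Zygmund theory then promotes this to $\bV_\cC\in W^{2,p}_{\rm loc}(\R^N)$ for every $p<\infty$, and in particular $\bV_\cC\in C^{1,\alpha}_{\rm loc}$. Finally, $\bV_\cC(0)=\alpha_N\int_\cC G(0,y)\,dy=0$ is immediate from $G(0,\cdot)\equiv 0$, while differentiating $\bV^k$ under the integral on the bounded set $\cC\cap B_k$ gives $\nabla\bV^k(0)=\alpha_N\int_{\cC\cap B_k}\nabla_xG(0,y)\,dy=0$; the $C^{1,\alpha}_{\rm loc}$ convergence $\bV^k\to\bV_\cC$ transfers this to $\nabla\bV_\cC(0)=0$.
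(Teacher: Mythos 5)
Your proof is correct and follows the same overall architecture as the paper's: the split of the integration domain by distance to the origin, the second-order Taylor bound $|G(x,y)|\lesssim |x|^2/|y|^N$ for the far field, separate estimates on the three summands of $G$ in the near/mid field using the thinness of $\cC$ from Lemma~\ref{lem:prop5.1}, and the truncation argument $\bV_{\cC\cap B_k}\to\bV_\cC$ for part~(iii). The one genuinely different step is your treatment of the singular convolution $\int_{\cC\cap B_{2|x|}}|x-y|^{2-N}\,dy$: you apply the $N$-dimensional Riesz rearrangement inequality to bound it by $|\cC\cap B_{2|x|}|^{2/N}$, whereas the paper rearranges slice-by-slice in $\R^{N-1}$ (centering $x'$ in $\int_{B'_{t^{(1+\delta)/2}}}|x'-y'|^{2-N}\,dy'$ for each $t$, then integrating in $t$ to get $|x|^{(3+\delta)/2}$). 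Your route is cleaner and in fact sharper: for $N=3$ it gives exponent $2(2+\delta)/3$ rather than $(3+\delta)/2$, so you need $\delta=5/8$ to hit $7/4$ exactly (the paper takes $\delta=1/2$), and by sending $\delta\to 0$ you could get any exponent above $4/3$, though the lemma only asks for $7/4$. Two small points of informality: your ``inner'' integral should strictly be split off from the compact piece $\cC\cap B_{2\hat r}$ where the $T_\delta$-containment does not apply (but that contributes only $O(1)$, so nothing changes); and the $C^{1,\alpha}_{\loc}$ convergence $\bV^k\to\bV_\cC$ you invoke at the very end needs a line of justification --- either by noting that $\bV^k-\bV_\cC$ is harmonic on $B_\rho$ for $k>\rho$, so uniform convergence upgrades to all derivatives by interior estimates, or via the uniform $W^{2,p}$ bound and compact embedding as in the paper.
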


\begin{proof}
Fix $\delta\in (0,1)$,  let $T_\delta$ be as in Lemma~\ref{lem:prop5.1}, and recall that \eqref{eq:cC contained Tdelta} holds.

To prove the estimate, we first note that the trivial bound
$$
|G(x,y)|\leq \frac{1}{\abs{x-y}^{N-2}} + \frac{1}{\abs{y}^{N-2}}  + (N-2)\frac{|x|}{ \abs{y}^{N-1}}
$$
holds. Also, by the Taylor expansion $f(1) = f(0)+ f'(0) + \int_0^1 (1-\tau) f''(\tau) \dx{\tau}$ applied to $f(\tau) := \frac{1}{|\tau x-y|^{N-2}}$, we get
\begin{equation}
\label{eq:Taylor G}
|G(x,y)|\leq C\frac{|x|^2}{ \abs{y}^{N}}\qquad \text{for }|y| > 2|x|.
\end{equation}
Using these two bounds and \eqref{eq:cC contained Tdelta}, we obtain
\begin{align}
\int_{\mathcal C}|G(x,y)|\dx{y} &\leq \int_{\mathcal C\cap B_{2\hat r}} \bra{\frac{1}{\abs{x-y}^{N-2}} + \frac{1}{\abs{y}^{N-2}}  + (N-2)\frac{|x|}{ \abs{y}^{N-1}}}\dx{y}\\
&
+ \int_{\mathcal C\cap (B_{2|x|}\setminus B_{2\hat r})}\bra{\frac{1}{\abs{x-y}^{N-2}} + \frac{1}{\abs{y}^{N-2}}  + (N-2)\frac{|x|}{ \abs{y}^{N-1}}}\dx{y}\\
&+{ C\int_{\mathcal C\setminus B_{2|x|}} \frac{|x|^2}{ \abs{y}^{N}} \dx{y}}\\
&\leq \int_{B_{2\hat r}} \bra{\frac{1}{\abs{x-y}^{N-2}} + \frac{1}{\abs{y}^{N-2}}  + (N-2)\frac{|x|}{ \abs{y}^{N-1}}}\dx{y} \label{eq:int G}\\
&
+ \int_{T_\delta \cap (B_{2|x|}\setminus B_{2\hat r})}\bra{\frac{1}{\abs{x-y}^{N-2}} + \frac{1}{\abs{y}^{N-2}}  + (N-2)\frac{|x|}{ \abs{y}^{N-1}}}\dx{y}\\
&+C \int_{T_\delta \setminus (B_{2|x|}\cup B_{2\hat r})}\frac{|x|^2}{ \abs{y}^{N}}\dx{y}=:I_1+I_2+I_3.
\end{align}
Since the integral of $\frac1{|x-y|^{N-2}}$ over a ball is maximized when $x$ coincides with the center of the ball,
for the first integral $I_1$ we have
$$
\int_{B_{2\hat r}} \bra{\frac{1}{\abs{x-y}^{N-2}} + \frac{1}{\abs{y}^{N-2}} }\dx{y} \leq 2
\int_{B_{2\hat r}} \frac{1}{\abs{y}^{N-2}} \dx{y} \leq C \text{ and }
\int_{B_{2\hat r}} \frac{1}{ \abs{y}^{N-1}}\dx{y}\leq C,
$$
hence 
\begin{equation}
\label{eq:I1}
I_1\leq C(1+|x|).
\end{equation}
About $I_2$, we note that this integral is nonzero only if $|x|>{\hat r}$. In such a case, we 
observe that, provided that ${\hat r}$ is large enough,
$$
T_\delta \cap (B_{2|x|}\setminus B_{2\hat r}) \subset T_\delta \cap \{{\hat r}<y_N<2|x|\}.
$$
Hence, since $\frac1{|y|}\leq \frac1{y_N}$ and $N \geq 3$,
\begin{multline}
\int_{T_\delta \cap (B_{2|x|}\setminus B_{2\hat r})} \frac{1}{\abs{y}^{N-2}} \dx{y} \leq 
\int_{T_\delta \cap \{{\hat r}<y_N<2|x|\}}\frac{1}{y_N^{N-2}} \dx{y} 
= \int_{{\hat r}}^{2|x|}  \cH^{N-1}(T_\delta \cap  \{y_N=t\}) \frac{1}{t^{N-2}} \dx{t} \\
\leq C\int_{{\hat r}}^{2|x|}  t^{\frac{(N-1)(1+\delta)}{2}+2-N} \dx{t}\leq C(1+|x|)^{\frac{(N-1)(1+\delta)}{2}+3-N} \label{eq:I2 1}
\leq C(1+|x|)^{1+\delta},
\end{multline}
and analogously
\begin{align}
\int_{T_\delta \cap (B_{2|x|}\setminus B_{2\hat r})} \frac{1}{\abs{y}^{N-1}} \dx{y} & \leq C\int_{{\hat r}}^{2|x|}  t^{\frac{(N-1)(1+\delta)}{2}+1-N} \dx{t}\\
& \leq C(1+|x|)^{\frac{(N-1)(1+\delta)}{2}+2-N}\leq C(1+|x|)^{\delta}, \label{eq:I2 2}
\end{align}
therefore
$$
\int_{T_\delta \cap (B_{2|x|}\setminus B_{2\hat r})}\bra{ \frac{1}{\abs{y}^{N-2}}  + (N-2)\frac{|x|}{ \abs{y}^{N-1}}}\dx{y} \leq C(1+|x|)^{1+\delta}.
$$
Finally, to estimate the integral of $\frac{1}{\abs{x-y}^{N-2}}$, we use again that the integral of $\frac1{|x'-y'|^{N-2}}$ over a ball is maximized when $x'$ coincides with the center of the ball.
This yields
$$
\int_{T_\delta \cap  \{y_N=t\}} \frac{1}{|x'-y'|^{N-2}}\dx{y'}=
 \int_{B_{t^{\frac{1+\delta}{2}}}'} \frac{1}{|x'-y'|^{N-2}} \dx{y'}  \leq  \int_{B_{t^{\frac{1+\delta}{2}}}'} \frac{1}{|y'|^{N-2}} \dx{y'}  =C t^{\frac{1+\delta}{2}},
$$
and therefore, since 
$\frac1{|x-y|}\leq \frac1{|x'-y'|}$,
\begin{align}
\int_{T_\delta \cap (B_{2|x|}\setminus B_{2\hat r})} \frac{1}{\abs{x-y}^{N-2}} \dx{y} \leq C\int_{{\hat r}}^{2|x|}  t^{\frac{1+\delta}{2}} \dx{t}\leq C|x|^{\frac{3+\delta}{2}}.
\end{align}
Overall, this proves that $I_2\leq C(1+|x|)^{\frac{3+\delta}{2}}$.

Finally, for $I_3$, we simply note that 
$$
T_\delta {\setminus} (B_{2|x|}\cup B_{2\hat r}) \subset T_\delta \cap \big\{y_N>\max\{|x|,{\hat r}\}\big\},
$$
hence
\begin{multline}\label{eq:I3}
 \int_{T_\delta \setminus (B_{2|x|}\cup B_{2\hat r})}\frac{|x|^2}{ \abs{y}^{N}}\dx{y}
 \leq |x|^2 \int_{\max\{|x|,{\hat r}\}}^{\infty}  \cH^{N-1}(T_\delta \cap  \{y_N=t\}) \frac{1}{t^{N}} \dx{t}\\
 = C|x|^2 \int_{\max\{|x|,{\hat r}\}}^{\infty}  t^{\frac{(N-1)(1+\delta)}{2}-N}  \dx{t} \leq  C |x|^2(1+|x|)^{\frac{(N-1)(1+\delta)}{2}+1-N}\leq C(1+|x|)^{1+\delta}.
\end{multline}
Combining all these bounds, we have shown that 
$$
\int_{\mathcal C}|G(x,y)|\dx{y}\leq C(1+|x|)^{\frac{3+\delta}{2}},
$$
where $\delta \in (0,1)$ is arbitrary. This proves that $V_{\mathcal C}$ is well-defined and locally bounded.
Also, choosing $\delta = 1/2$, we obtain that
\begin{equation}
\label{eq:sublinear}
\frac{|\bV_\cC(x)|}{(1+|x|)^{7/4}} \leq C_{\hat r,N}\qquad \text{ for all }x \in \R^N
\end{equation}
where the constant $C_{\hat r,N}$ depends only on the dimension $N$ and the radius $\hat r$ defined in \eqref{eq:cC contained Tdelta} for $\delta=1/2.$

To prove the $W^{2,p}$-regularity of $V_{\mathcal C}$ we note that, for $\rho > {2}\max\{{\hat r},|x|\}$,
$$
|\bV_{\cC}(x)-\bV_{\cC\cap B_{\rho}}(x)|\leq C \int_{T_\delta \setminus B_{\rho}}\frac{|x|^2}{ \abs{y}^{N}}\dx{y} \leq C |x|^2 \int_{\rho}^{\infty}  t^{\frac{(N-1)(1+\delta)}{2}-N}  \dx{t} \leq C|x|^2\rho^{\delta-1}.
$$
This implies that $\bV_{\cC}$ is the locally uniform limit of the sequence of the continuous functions $\bV_{\cC\cap B_\rho}$ as $\rho \to \infty$.
Also, since
$$
\alpha_N\Delta_x\biggl(\frac{1}{\abs{x-y}^{N-2}} - \frac{1}{\abs{y}^{N-2}}  - (N-2)\frac{x \cdot y}{ \abs{y}^N} \biggr)=-\delta_x\quad \text{in the sense of distributions},
$$
one easily deduces that $\Delta \bV_{\cC\cap B_\rho}=-\chi_{\cC\cap B_\rho} \in L^\infty(\R^N)$ for each $\rho>0$. Thus, by elliptic regularity, the functions $\bV_{\cC\cap B_\rho}$ are locally uniformly bounded in $W^{2,p}$ for each $p<\infty$. In particular
because of the compact embedding $W^{2,p}(B_\rho)\hookrightarrow C^{1,\alpha}(B_\rho)$
for $p>N$, we deduce that
$$
\bV_{\cC\cap B_\rho} \to \bV_{\cC} \quad \text{and}\quad \nabla\bV_{\cC\cap B_\rho} \to \nabla \bV_{\cC} \quad \text{locally uniformly in $\R^N$, as $\rho \to \infty$.}
$$
Since $G(0,\cdot)\equiv 0$ and $\nabla_xG(0,\cdot)\equiv 0$, we obtain for each $\rho>0$ that
$$
\bV_{\cC\cap B_\rho}(0)=\int_{\cC\cap B_\rho}G(0,y)\dx{y}=0,\qquad \nabla \bV_{\cC\cap B_\rho}(0)=\int_{\cC\cap B_\rho}\nabla_x G(0,y)\dx{y}=0,
$$
so we conclude that $\bV_\cC(0)=\nabla  \bV_\cC(0)=0$.
\end{proof}
	
	As a consequence of the previous lemma, we can now show the following important result.
\begin{prop}[Newtonian potential expansion]\label{prop:Newton_potential_expansion}
Let $N\geq 3$, let $u$ be an $x_N$-monotone solution in the sense of Definition \ref{def:solution},
and let $p$ be the blow-down limit in Definition \ref{def:solution}\eqref{PDE_asymptotics}.  Then the expansion
	\begin{align} \label{eq:potential_expansion_of_the_solution}
		u = p +\bV_\cC
	\end{align}
	holds.
\end{prop}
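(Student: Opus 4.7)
The plan is to show that $h := u - p - V_{\cC}$ is harmonic, then identify it as a homogeneous harmonic polynomial of degree two, and finally rule out this possibility by comparison with the blow-down.

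First, I would verify harmonicity. Since $\Delta u = \chi_{\{u>0\}} = 1 - \chi_{\cC}$, while $p$ is a quadratic polynomial with $\Delta p = \tr(Q) = 1$, and by Lemma~\ref{lem:existence_of_newton_potential}\eqref{item:W2p VC} one has $\Delta V_{\cC} = -\chi_{\cC}$, it follows that $\Delta h \equiv 0$ in $\R^N$ (in the distributional sense, and then classically by elliptic regularity, since $h \in W^{2,p}_{\loc}$).

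Next, I would obtain polynomial growth for $h$. Global solutions to the obstacle problem have at most quadratic growth, so $|u(x)| \le C(1+|x|^2)$; the quadratic polynomial $p$ satisfies $|p(x)| \le C|x|^2$; and by Lemma~\ref{lem:existence_of_newton_potential}\eqref{item:subquadratic_growth_of_Newton_potential_of_coincidence_set}, $|V_{\cC}(x)| \le C(1+|x|)^{7/4}$. Consequently $|h(x)| \le C(1+|x|^2)$. A harmonic function on $\R^N$ with at most quadratic growth is a polynomial of degree at most two. Moreover, the three terms defining $h$ all vanish together with their gradient at the origin: for $u$, this follows from $u \ge 0$, $u(0)=0$, and $u \in C^{1,1}$; for $p$ this is immediate; and for $V_{\cC}$ it is part of Lemma~\ref{lem:existence_of_newton_potential}\eqref{item:W2p VC}. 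Therefore $h(x) = \tfrac{1}{2} x^T A x$ for some symmetric, trace-free matrix $A$.

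Finally, I would pass to the blow-down to show $A = 0$. By Definition~\ref{def:solution}\eqref{PDE_asymptotics}, $r^{-2} u(rx) \to p(x)$ locally uniformly, and trivially $r^{-2} p(rx) = p(x)$. From the subquadratic bound $|V_{\cC}(rx)| \le C(1+r|x|)^{7/4}$ we obtain $r^{-2} V_{\cC}(rx) \to 0$ locally uniformly. Hence
\begin{equation}
\tfrac12 x^T A x = \frac{h(rx)}{r^2} = \frac{u(rx) - p(rx) - V_{\cC}(rx)}{r^2} \xrightarrow[r\to\infty]{} 0
\end{equation}
for every $x \in \R^N$, which forces $A = 0$ and therefore $h \equiv 0$. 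This gives the desired identity $u = p + V_{\cC}$.

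I do not see a genuine obstacle in this proposal: the only slightly delicate point is to justify polynomial growth control and the identification of $h$ as a harmonic polynomial, but this follows directly from the standard Liouville-type statement together with the subquadratic growth of $V_{\cC}$ provided by Lemma~\ref{lem:existence_of_newton_potential}, which is precisely what has been set up in the previous lemma.
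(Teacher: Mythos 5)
Your proof is correct and takes essentially the same approach as the paper's: in both, the harmonicity of $u-p-V_\cC$, the subquadratic growth estimate from Lemma~\ref{lem:existence_of_newton_potential}, a Liouville-type argument, and the vanishing of $V_\cC$ and its gradient at the origin are the key ingredients. The only (immaterial) difference is in ordering — the paper uses the blow-down asymptotics of $u-p$ up front to get subquadratic growth of $u-p-V_\cC$ and thus affine structure directly from Liouville, whereas you first allow degree-two polynomial growth and then invoke the blow-down at the end to kill the quadratic part.
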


\begin{proof}
Recall that, thanks to Lemma \ref{lem:existence_of_newton_potential}\eqref{item:W2p VC},
$\bV_\cC$ is a strong $W^{2,p}_{\operatorname{loc}}(\R^N)$  solution of 
$\Delta \bV_\cC = - \chi_\cC$.
Moreover, if we set $v := u-p$, then $v \in C^{1,1}_{\loc}(\R^N)$ (see Lemma~\ref{lem:compact C1}\eqref{item:C11}) and it solves the same equation as $\bV_\cC$, i.e. $\Delta v = - \chi_\cC$.
Hence $v-\bV_\cC$ is harmonic in $\R^N$, and it follows from Definition \ref{def:solution}\eqref{PDE_asymptotics} and Lemma \ref{lem:existence_of_newton_potential}\eqref{item:subquadratic_growth_of_Newton_potential_of_coincidence_set}
that $v-\bV_\cC$ has subquadratic growth. This allows us to apply Liouville's theorem
to obtain that
\begin{align}
	v-\bV_\cC = \ell +c,
\end{align}
where $\ell$ is a linear function and $c$ is a constant. Thus we have proved 
\begin{align} \label{eq:Newton_potential_expansion_of_u}
	u = p + \ell + c + \bV_\cC\quad \text{ in } \R^N.
\end{align}
Now, since $0 \in \partial \cC$, it follows from Lemma \ref{lem:existence_of_newton_potential}\eqref{item:W2p VC} that
\begin{align}
0 &= u(0) = p(0) + \ell(0) + c +V_\cC(0) = c, \\
0 &= \nabla u(0) = \nabla p(0) + \nabla \ell(0) + \nabla V_\cC(0) =  \nabla \ell(0).
\end{align}
This proves that both $\ell$ and $c$ vanish,
 concluding  the proof.
\end{proof}

As we shall see in the next section, this potential expansion allows us to obtain a  very precise control on the asymptotic behavior of the coincidence set $\cC$.

\section{Improved estimate on the asymptotic behavior of the coincidence set $\cC$} \label{sec:improved_estimate_on_asymptotics_of_C}

The goal of this section is to prove that $\cC$ is contained in some paraboloid. While for $N\geq 4$ there is a very simple argument to prove this result, the proof for $N=3$ is amongst the most delicate of this paper (see in particular the proof of Lemma~\ref{lem:estimate_on_paraboloid_from_outside_in_measure_sense} below).

\begin{prop}[$\cC$ is contained in a paraboloid] \label{prop:C_is_contained_in_paraboloid}
	 
	Let $N\geq 3$, and let $u$ be an $x_N$-monotone solution in the sense of Definition \ref{def:solution}.
 Then there are constants $a_0, \gamma_0 \in (0,+\infty)$ such that:
	\begin{enumerate}[(i)]
		\item \label{item:growth_of_coincidence_set}
				$\cC \cap \set {x_N >a_0} \subset \big\{\abs{x'}^2 < \gamma_0 x_N\big\}$;
		
		\item \label{item:boundedness of coincidence_set_around_zero}
		$\cC \cap \set {x_N \leq a_0}$  is bounded.
	\end{enumerate}
\end{prop}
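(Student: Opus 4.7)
My plan is to treat the two parts separately, as they are of very different difficulty.

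Part (ii) is immediate from Lemma~\ref{lem:prop5.1}. Fixing any $\delta \in (0,1)$, that lemma provides a radius $\hat r > 1$ with $\cC \setminus B_{2\hat r} \subset \{x_N > \hat r\}$. Setting $a_0 := \hat r$ gives $\cC \cap \{x_N \le a_0\} \subset B_{2\hat r}$, which is bounded.

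For part (i), the starting point is the Newton potential expansion $u = p + V_\cC$ of Proposition~\ref{prop:Newton_potential_expansion}. Since $u \equiv 0$ on $\cC$, this reads $p(x') = -V_\cC(x)$ on $\cC$, and combined with the coercivity $p(x') \ge c_p |x'|^2$ from \eqref{eq:def_of_c_p} it yields
$$c_p |x'|^2 \le |V_\cC(x)| \qquad \text{for all } x \in \cC.$$
Thus the target inclusion $|x'|^2 < \gamma_0 x_N$ reduces to establishing the linear bound
$$|V_\cC(x)| \le C\,x_N \qquad \text{for } x \in \cC \text{ with } x_N \text{ large},$$
and the rest of the plan is devoted to proving it.

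For $N \geq 4$, I would refine the integral estimates in the proof of Lemma~\ref{lem:existence_of_newton_potential}, now taking $\delta$ arbitrarily small in the inclusion $\cC \setminus B_{2\hat r} \subset T_\delta$. Splitting $V_\cC(x) = \alpha_N \int_\cC G(x,y)\,dy$ into a compact core, a near-field $\cC \cap (B_{2|x|} \setminus B_{2\hat r})$, and a far-field $\cC \setminus B_{2|x|}$, the Taylor bound $|G(x,y)| \le C |x|^2 / |y|^N$ handles the core and far-field in a manner linear in $x_N$ (using $|x| \le 2 x_N$ on $T_\delta$ for $x_N$ large). The critical piece is the near-field integral of $1/|x-y|^{N-2}$, which in cylindrical coordinates reduces to $\int \int \frac{r^{N-2}}{(r^2 + s^2)^{(N-2)/2}}\,dr\,ds$ on the strip $0 \le r \le x_N^{(1+\delta)/2}$, $|s| \le x_N/2$; elementary analysis shows this is at most $C x_N$ precisely when $N \geq 4$, yielding the desired linear bound on $\cC$ and completing part (i) in these dimensions.

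The case $N = 3$ is the central obstacle of this section: the analogous integral becomes logarithmically critical and the above direct bound fails. I would instead argue by contradiction and compactness. Supposing (i) fails, I extract a sequence $x^k \in \cC$ with $x^k_N \to \infty$ and $|x^{k,'}|^2 / x^k_N \to \infty$, rescale by $x^k_N$, and pass to a limit via the blow-down convergence of Definition~\ref{def:solution}\eqref{PDE_asymptotics} together with the compactness from Lemma~\ref{lem:compact C1}. The limiting configuration would force the coincidence set to occupy a measure-theoretically too-large fraction of a suitable cylindrical strip around the $x_N$-axis, contradicting a delicate paraboloid-type estimate in the measure sense for $\cC$ (the main technical lemma announced in the section's overview). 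Implementing this contradiction rigorously for $N = 3$ is where I expect the real difficulty to lie, and it is the main novelty of the paper compared with the earlier treatment in dimensions $N \geq 6$.
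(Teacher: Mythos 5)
Your reduction is sound --- part~(ii) is immediate from Lemma~\ref{lem:prop5.1}, and from $u=p+V_\cC$ on $\cC$ the target becomes a bound of order $x_N$ on $V_\cC$ along $\cC$ --- but the $N\geq 4$ argument has a genuine error. Your own cylindrical integral shows the near-field singular piece is not linear:
\[
\int_0^A\!\!\int_0^B\frac{r^{N-2}}{(r^2+s^2)^{(N-2)/2}}\,ds\,dr
=\int_0^A r\int_0^{B/r}\frac{du}{(1+u^2)^{(N-2)/2}}\,dr
\le C_N A^2=C_N\,x_N^{1+\delta}
\]
for every $N\geq 4$, which is strictly superlinear for any $\delta>0$, and $\delta>0$ is forced by Lemma~\ref{lem:prop5.1}; there is no bootstrap either, because $|x'|^2\le C x_N^{1+\delta}$ on $\cC$ just reproduces the $T_\delta$ inclusion you started from. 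The missing idea is that you only need a \emph{one-sided} bound: since $0=u=p+V_\cC$ on $\cC$ and $p\ge 0$, you already know $V_\cC\le 0$ there, so the target is $V_\cC(x)\ge -C(1+|x|)$. In that lower bound the problematic kernel $\tfrac{1}{|x-y|^{N-2}}$ enters with a \emph{positive} sign and can be discarded entirely; the remaining near-field and far-field contributions are $O\bigl((1+|x|)^{(1+3\delta)/2}\bigr)$ for $N\ge 4$, giving linearity at $\delta=1/3$. This one-sidedness is the actual mechanism, not a refinement of your two-sided estimate.

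For $N=3$ your outline --- blow-up, compactness, and a paraboloid-type measure estimate --- is the right strategy at a high level, and it matches the paper's split into Lemma~\ref{lem:sections_are_asymptotically_bounded_by_measure} (which controls the width of each slice $\cC_t$ by $\sqrt{\cH^2(\cC_t)}$, via a compactness/contradiction argument using cylindrical blow-ups and Lemma~\ref{lem:infinite line}) and Lemma~\ref{lem:estimate_on_paraboloid_from_outside_in_measure_sense} (which shows $\cH^2(\cC_t)\le C(1+t)$). But the second lemma is where the real content lies and you do not prove it: it is established by one-dimensionalizing the potential integral against $H(t)=\cH^2(\cC_t)$, deriving an integral inequality for $H$, converting it to a differential inequality with a logarithmic nonlinearity, and running a delicate comparison argument. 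As written, your $N=3$ portion identifies the target but is not a proof.
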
 

\begin{proof}
Thanks to Lemma~\ref{lem:prop5.1} it follows that $\cC \cap \set {x_N \leq a}$  is bounded for each $a>0$, so \eqref{item:boundedness of coincidence_set_around_zero} holds for each $a_0>0$. In particular,
 it suffices to prove \eqref{item:growth_of_coincidence_set} for $a_0$ sufficiently large.

We first prove the result in the case $N\geq 4$ (since the proof is very simple), and then focus on the delicate case $N=3$.

\noindent
$\bullet $ \textbf{The case $N\geq 4$.}
Arguing as in the proof of Lemma \ref{lem:existence_of_newton_potential}, given $\delta\in (0,1)$, for $|x|> \hat r$ we have
\begin{align}
\frac{1}{\alpha_N}V_{\cC}(x)=\int_{\mathcal C}G(x,y)\dx{y} &\geq \int_{\cC\cap B_{2\hat r}} \bra{\frac{1}{\abs{x-y}^{N-2}} - \frac{1}{\abs{y}^{N-2}}  - (N-2)\frac{|x|}{ \abs{y}^{N-1}}}\dx{y} \\
&\quad+ \int_{\cC \cap (B_{2|x|}\setminus B_{2\hat r})} \bra{\frac{1}{\abs{x-y}^{N-2}} - \frac{1}{\abs{y}^{N-2}}  - (N-2)\frac{|x|}{ \abs{y}^{N-1}} }\dx{y}\\
&\quad-C \int_{T_\delta \setminus B_{2|x|}}\frac{|x|^2}{ \abs{y}^{N}}\dx{y}=:I_1+I_2+I_3
\end{align}
(cp. \eqref{eq:int G}).
Then, again as in the proof of Lemma \ref{lem:existence_of_newton_potential}, we have that $|I_1|\leq C(1+|x|)$
and 
$$
|I_3| \leq  C |x|^2(1+|x|)^{\frac{(N-1)(1+\delta)}{2}+1-N} \leq C(1+|x|)^{\frac{1+3\delta}{2}} \qquad \text{for }N \geq 4
$$
(cp. \eqref{eq:I1} and \eqref{eq:I3}).
For $I_2$, we observe that the first term is non-negative and we estimate the remaining two as in \eqref{eq:I2 1} and \eqref{eq:I2 2}, so to get
\begin{align}
I_2 &\geq -\int_{T_\delta \cap (B_{2|x|}\setminus B_R)} \bra{\frac{1}{\abs{y}^{N-2}}  + (N-2)\frac{|x|}{ \abs{y}^{N-1}} }\dx{y}\\
&\geq -C(1+|x|)^{\frac{(N-1)(1+\delta)}{2}+3-N}\geq -C(1+|x|)^{\frac{1+3\delta}{2}} \qquad \text{for }N \geq 4.
\end{align}
Choosing $\delta {=} 1/3$ proves that $V_\cC(x)\geq -C(1+|x|)$ for all $x \in \R^N$.

Now, applying Proposition \ref{prop:Newton_potential_expansion} and combining this bound with \eqref{eq:def_of_c_p}, we conclude that
$$
0=u(x)=p(x)+V_\cC(x)\geq c_P |x'|^2-C(1+|x|) \qquad \text{ for all }x \in \{u=0\}.
$$
From this estimate we easily deduce that
$$
|x'|^2\leq C(1+ |x_N|)=C(1+x_N)\qquad  \text{ for all }x \in \{u=0\}
$$
(recall that $\{u=0\}\subset \{x_N \geq 0\}$),
so \eqref{item:growth_of_coincidence_set} follows.

\noindent
$\bullet$ \textbf{The case $N=3$.}
This case follows from Lemmas \ref{lem:sections_are_asymptotically_bounded_by_measure} and \ref{lem:estimate_on_paraboloid_from_outside_in_measure_sense} below.
\end{proof}

The rest of the section is devote to the proof of Lemmas \ref{lem:sections_are_asymptotically_bounded_by_measure} and \ref{lem:estimate_on_paraboloid_from_outside_in_measure_sense}.

%
%

\begin{lem}[Sections of $\cC$ are controlled by their measure] \label{lem:sections_are_asymptotically_bounded_by_measure}
Let $N= 3$, and let $u$ be an $x_N$-monotone solution in the sense of Definition \ref{def:solution}.
We define $\cC_t := \{y' \in \R^{2} : (y',t) \in \cC\}$ and $H(t) := \cH^{2}(\cC_t)$ for all $t \geq 0$. Then:\\
- either $\{\cC_t\}_{t \geq 0}$ is bounded, i.e. $\sup_{t \geq 0} \operatorname{diam}(\cC_t) < \infty$;\\
- or
there exist $a_0>1$ and $C_0 < \infty$ such that, for all $x_3 \geq a_0$,
\begin{align}
	\cC_{x_3} \subset \set { |x'|^2 < C_0 H(x_3)}  .
\end{align}
\end{lem}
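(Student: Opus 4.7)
The plan is to combine the monotone structure of $\cC$, coming from $x_N$-monotonicity, with the Newton potential identity $V_\cC = -p$ on $\cC$ from Proposition~\ref{prop:Newton_potential_expansion}. First I would observe that $\partial_N u \leq 0$ and $u \geq 0$ force $\cC$ to be upward monotone: if $u(x',x_N)=0$ and $\tilde x_N \geq x_N$, then $u(x',\tilde x_N)\leq u(x',x_N)=0$, hence $u(x',\tilde x_N)=0$. Thus $\cC_s \subset \cC_t$ whenever $s\leq t$, and both $D(t):=\operatorname{diam}(\cC_t)$ and $H(t)$ are non-decreasing in $t$. If $\sup_t D(t) < \infty$ the first alternative is immediate; otherwise $D(t)\to \infty$, and if additionally $H(t_0)=+\infty$ for some $t_0$ (which is what happens for cylindrical-type behaviour in a horizontal direction), then by nesting $H(t)=+\infty$ for all $t\geq t_0$ and the inclusion $\cC_{x_3}\subset\{|x'|^2 < C_0 H(x_3)\}$ is trivially satisfied. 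The substantive case is therefore $D(t)\to\infty$ together with $H(t)<\infty$ for every $t$, in which each $\cC_t$ is a bounded convex body in $\R^2$ (since a $2$-dimensional convex set with finite area and non-empty interior must be bounded).

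In this remaining case I would reduce the lemma to the quantitative lower bound
\begin{equation*}
V_\cC(x',t) \;\geq\; -C\,H(t) \qquad \text{for all } (x',t)\in \cC \text{ with } t\geq a_0,
\end{equation*}
for some constants $C, a_0$ depending only on the solution. Indeed, combining this with Proposition~\ref{prop:Newton_potential_expansion} and \eqref{eq:def_of_c_p} gives $c_p |x'|^2 \leq p(x') = -V_\cC(x',t) \leq C\,H(t)$, which is exactly the second alternative with $C_0 = C/c_p$.

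The main obstacle is proving the lower bound on $V_\cC$. Starting from the integral representation $V_\cC(x) = \alpha_3 \int_\cC G(x,y)\,dy$ with $G(x,y) = \frac{1}{|x-y|} - \frac{1}{|y|} - \frac{x\cdot y}{|y|^3}$, the plan is to split $\cC$ into three vertical slabs $\{y_3\leq t/2\}$, $\{t/2 < y_3 < 2t\}$, and $\{y_3 \geq 2t\}$. On the outer slabs, the Taylor expansion bound $|G(x,y)|\leq C|x|^2/|y|^3$ valid for $|y|\geq 2|x|$, combined with the tubular containment $\cC\subset T_\delta \cup B_{2\hat r}$ from Lemma~\ref{lem:prop5.1} and the nesting $\cC_s\subset \cC_t$, should give contributions controlled by $H(t)$ for a suitable choice of $\delta \in (0,1)$. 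The middle slab is the delicate piece: the singular kernel $1/|x-y|$ must be integrated against the $2$-dimensional cross-sections $\cC_{y_3}\subset \cC_{2t}$ (using $2$D potential estimates of the form $\int_K 1/|x'-y'|\,dy' \leq C\sqrt{|K|}$), and the subtraction terms $-1/|y|$ and $-x\cdot y/|y|^3$ must be tracked carefully to cancel the otherwise divergent contributions. The hardest point will be extracting the sharp $-C\,H(t)$ dependence from these cancellations, rather than a strictly weaker bound with an extra logarithmic or polynomial factor in~$t$.
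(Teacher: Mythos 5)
Your proposal takes a genuinely different route from the paper's, and it has a real gap that the paper deliberately sidesteps.

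The paper proves Lemma~\ref{lem:sections_are_asymptotically_bounded_by_measure} by a soft contradiction argument: assuming the conclusion fails along a sequence $x^n \in \cC$ with $x_3^n\to\infty$ and $|(x^n)'|^2 > n\,H(x_3^n)$, it rescales by the section diameter $d_n = \operatorname{diam}(\cC_{x_3^n})$, passes to a blow-up limit using Lemma~\ref{lem:compact C1}, shows the limit contains the full line $\R e^3$ (hence is cylindrical by Lemma~\ref{lem:infinite line}), then identifies it with the blow-down $p$ via the measure-zero section and Remark~\ref{rem:global liouville}, which contradicts the limit's section having diameter $\geq 1$. No quantitative potential estimate is involved.

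You instead restate the lemma in potential form (noting, correctly, that $V_\cC(x',t)=-p(x')$ on $\cC$ so the claim is \emph{equivalent} to $-V_\cC(x',t)\leq C\,H(t)$) and propose to prove that bound by slicing the integral $\int_\cC G((x',t),y)\,dy$ into slabs. This is where the plan breaks down. First, the Taylor bound $|G(x,y)|\lesssim |x|^2/|y|^3$ that you invoke on ``the outer slabs'' only holds for $|y|\geq 2|x|$; on the bottom slab $\{y_3<t/2\}$ one typically has $|y|\lesssim t\sim |x|$, so the bound is inapplicable exactly where the difficulty lives. Second, the three terms of $G$ are not individually integrable over $\cC$ (e.g.\ on a paraboloid $\int_\cC |y|^{-1}\,dy=\int H(s)/s\,ds=\infty$), so a term-by-term slab estimate can't be run without first pairing terms to produce cancellation --- and when one tries, the bottom slab produces a $t\log t$ contribution from the $x\cdot y/|y|^3$ term that is not $\lesssim H(t)$. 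Third, even where the Taylor bound applies (top slab), the only a priori inputs --- $H(s)\leq s^{1+\delta}$ from Lemma~\ref{lem:prop5.1} and concavity of $\sqrt H$ --- give $t^2\int_{2t}^\infty H(s)/s^3\,ds\lesssim H(t)\log t$, not $\lesssim H(t)$; removing the log seems to require $H(s)\lesssim s$, which is the content of Lemma~\ref{lem:estimate_on_paraboloid_from_outside_in_measure_sense}, itself proved \emph{after} and \emph{using} the present lemma, so the approach is circular as stated. You flag the sharp dependence as ``the hardest point,'' and indeed that is precisely what is missing; the paper's compactness argument avoids these cancellation issues entirely, which is why it works without any a priori growth bound on $H$. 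If you want to pursue the quantitative route, note that the paper's own quantitative analysis (Lemma~\ref{lem:estimate_on_paraboloid_from_outside_in_measure_sense}) evaluates the potential only on the axis $te^3$, where the singular kernel is much easier to control, and still needs the full integral-to-ODE machinery of Steps 5--8 to beat the logarithms.
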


\begin{proof} We may assume that
	\begin{align} \label{eq:diam_of_sections_is_unbounded}
	\sup_{t \geq 0} \operatorname{diam}(\cC_t) = \infty.
	\end{align}
Suppose, towards a contradiction, that the statement of the lemma is not true. Then, there exists a sequence $(x^n)_{n \in \N} \subset \R^{2} \times (0, \infty)$ such that $x_3^n \to \infty$ as $n \to \infty$ and, for all $n \in \N$,
\begin{align} \label{eq:contradiction_assumption_nodegeneracy_of_sections}
 |(x^n)'|^2 > n H(x^n_3) \quad  \text{ and } \quad x^n \in \cC .
\end{align}
Define $d_n := \operatorname{diam}(\cC_{x_3^n})$. From Lemma~\ref{lem:prop5.1} we know that, given $\delta \in (0,1)$,
\begin{align} \label{eq:bound_of_diameter_of_sections}
	d_n \leq (x_3^n)^{\frac{1+\delta}{2}}\qquad \text{ for all $n$ sufficiently large}.
\end{align}
On the other hand, \eqref{eq:diam_of_sections_is_unbounded} together with the monotonicity of $t\mapsto \operatorname{diam}(\cC_t)$ (recall that by Definition \ref{def:solution}, $u$ is {de}creasing in the $e^3$-direction) imply that 
\begin{align} \label{eq:diam_of_section_goes_to_infty}
	d_n \to \infty \quad \text{ as } n \to \infty.
\end{align}
Let us define for each $n \in \N$ the rescaling
\begin{align}
	u_n(x) := \frac{u( (0, x_3^n) + d_n x)}{d_n^2}.
\end{align}
Note that, as a consequence of \eqref{eq:contradiction_assumption_nodegeneracy_of_sections}, the convex sets $\{u_n=0\}\cap \{x_3=0\}\subset \R^{2}$ have diameter $1$, contain the origin, and their $\cH^{2}$-measure goes to zero. 
Hence, 
thanks to Lemma~\ref{lem:compact C1}\eqref{item:compact}, the fact that
$t \mapsto \cC(t)$ is increasing (recall that by Definition \ref{def:solution}, $u$ is decreasing in the $e^3$-direction) and the convergence of coincidence sets (see \cite[Proposition 3.17 (iv) and Proposition 3.17 (v)]{PetrosyanShahgholianUraltseva_book}), passing if necessary to a subsequence we obtain that
\begin{align}
	u_n \to u_0 \quad \text{ in } C_{\loc}^{1,\alpha}(\R^3) \text{ as } n \to \infty,
\end{align}
and 
\begin{align} \label{eq:measure zero section}
\operatorname{diam}(\{u_0=0\}\cap \{x_3=0\}){\geq} 1,\qquad	{|\{u_0=0\}\cap \{-1<x_3<0\}|=0},
\end{align}
where $u_0$ is a global solution to the obstacle problem.
Also, since $0 \in \cC$ (cf. Definition \ref{def:solution}\eqref{eq:conincidence_set_of_u_is_on_the_right}) and $\delta \in (0,1)$, it follows from \eqref{eq:bound_of_diameter_of_sections} that 
\begin{align}
\frac{\operatorname{dist}(0, (0,x^n_3))}{d_n} \geq \tfrac{x_3^n}{(x_3^n)^\frac{1+\delta}{2}} \to \infty \quad  \text{ as } n \to \infty.
\end{align}
Thus, by the convexity of $\cC$ (cf. Remark \ref{rem:convexity_of_coincidence_set_of_global_solution}), we deduce that $\{t e^3 : t \leq 0\} \subset \{u_0=0\}$. 
On the other hand, the fact that $\{t e^3 : t \geq 0\} \subset \cC$ (see Remark \ref{rem:coincidence_set_contains_ray}) implies that $\{ t e^3 : t \geq 0\} \subset \{u_0=0\}$.
Hence
\begin{align}
	\{ t e^3 : t \in \R \} \subset \{ u_0=0\},
\end{align}
and therefore it follows from Lemma~\ref{lem:infinite line} that $u_0$ is invariant  the $e^3$-direction, i.e.
\begin{align}
	\quad u_0(x) = u_0(x',0) \qquad \text{ for all } x \in \R^3.
\end{align}
Combining this information with \eqref{eq:measure zero section}, we deduce that the coincidence set of $u_0$ has measure zero, hence Remark~\ref{rem:global liouville} implies that $u_0$ coincides with a quadratic polynomial $q=q(x')$. 
On the other hand, \cite[Lemma B.2]{esw_arXiv} implies that the blow-down limit of $u_0$ is $p$ (being the blow-down limit of $u$), and therefore the only possibility is that $u_0=q=p.$
By the nondegeneracy of $p$ in $\R^{2}$ (see Definition \ref{def:solution}\eqref{PDE_asymptotics}), this implies that $\{u_0=0\}\cap \{x_3=0\}$ coincides with the origin,
a contradiction to the fact that this set has diameter at least 1 (see \eqref{eq:measure zero section}).
This contradiction proves the lemma.
\end{proof}

\begin{lem}[The measure of sections grows at most linearly] \label{lem:estimate_on_paraboloid_from_outside_in_measure_sense}
Let $N= 3$, and let $u$ be an $x_N$-monotone solution in the sense of Definition \ref{def:solution}.
Then there exists a constant $C$ such that for, all $t \geq 0$,
\begin{align}
\cH^{2}(\cC \cap \set{x_3 = t })\leq C(1+ t).
\end{align}
\end{lem}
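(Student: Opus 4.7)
The plan is to argue by contradiction via a rescaling that produces a two-dimensional obstacle problem at infinity. I will suppose there exists a sequence $t_n \to \infty$ with $H(t_n)/t_n \to \infty$; Lemma~\ref{lem:sections_are_asymptotically_bounded_by_measure} then gives $\cC_{t_n} \subset B'_{\sqrt{C_0 H(t_n)}}$. Setting $r_n := \sqrt{H(t_n)}$ and $u_n(y) := u(r_n y + (0, t_n))/r_n^2$, I obtain global obstacle solutions whose slice $\cC_n \cap \{y_3 = 0\}$ has area exactly $1$ and lies in $B'_{\sqrt{C_0}}$. By Lemma~\ref{lem:prop5.1}, $H(t_n) \leq \pi t_n^{1+\delta}$ for any $\delta \in (0,1)$, so $t_n/r_n \geq c\, t_n^{(1-\delta)/2} \to \infty$.

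Via Lemma~\ref{lem:compact C1}, I pass to a subsequence with $u_n \to u_\infty$ in $C^{1,\alpha}_{\operatorname{loc}}$. Since $(0, t_n + r_n y_3) \in \cC$ for every $y_3 \geq -t_n/r_n$, the limiting coincidence set contains the full $e^3$-axis, and Lemma~\ref{lem:infinite line} forces $u_\infty(y) = w(y')$ for a two-dimensional global obstacle solution $w$. Using the monotonicity of $\cC_n$ in $y_3$ (Definition~\ref{def:solution}\eqref{eq:monotone}) together with bounded convergence of $\chi_{\cC_n}$, I obtain $|\{w = 0\} \cap B'_{\sqrt{C_0}}| \geq 1$; by Sakai's two-dimensional classification of global obstacle solutions, $\{w = 0\}$ must then be a bounded nondegenerate ellipse.

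To extract the contradiction, I will quantitatively match the far-field behavior of $w$ with the blow-down $p$ of $u$. Writing $u_n = p + V_\cC(r_n y + (0, t_n))/r_n^2$ via Proposition~\ref{prop:Newton_potential_expansion}, the limit $V_\infty := w - p$ is a two-dimensional function with $\Delta V_\infty = -\chi_{\{w=0\}}$; since $(0, t_n) \in \cC$ and $p(0) = \nabla p(0) = 0$, I have $V_\cC(0, t_n) = \nabla V_\cC(0, t_n) = 0$, which pins down the low-order Taylor jet of $V_\infty$ at the origin. Combined with the classical closed-form identities for the two-dimensional Newton potential of an ellipse (which link the ellipse's axes to the blow-down of the 2D obstacle solution), this aims to rigidify the shape of $\{w = 0\}$ in terms of $p$. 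Rerunning the argument with the scale $\sigma r_n$ for arbitrary $\sigma > 0$ produces ellipses of area $\geq 1/\sigma^2$, and letting $\sigma \to 0$ should eventually violate the shape-area constraint coming from the rigidified blow-down.

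The main obstacle is this final matching step. The pointwise bound $|V_\cC(x)| \leq C(1 + |x|)^{7/4}$ from Lemma~\ref{lem:existence_of_newton_potential}\eqref{item:subquadratic_growth_of_Newton_potential_of_coincidence_set} is only subquadratic, so a priori the limit $V_\infty$ may acquire a harmonic quadratic correction that shifts the blow-down of $w$ away from $p|_{\R^2}$. I expect the proof to handle this via an iterated bootstrap: combining the identity $V_\cC = -p$ on $\cC$ with the pointwise upper bound yields the intermediate bound $H(t) \leq C t^{7/4}$ as a first step; refining the control on $V_\cC$ via this improved thinness and iterating the rescaling argument should then reduce the growth down to the linear scale. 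Carrying out this bootstrap sharply is the most delicate point of the proof.
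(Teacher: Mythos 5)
Your strategy is genuinely different from the paper's: you propose a compactness/blow-up argument at scale $\sqrt{H(t_n)}$ combined with Sakai's two-dimensional classification and the potential expansion, whereas the paper one-dimensionalizes the integral $V_\cC(te^3)$ using the Brunn--Minkowski concavity of $\sqrt{H}$, derives an integral inequality for $H$, converts it to a differential inequality, and closes with an ODE comparison argument. The decisive ingredient in the paper's proof is a \emph{logarithmic} gain in the lower bound for the one-dimensionalized potential (Step 4: $W(t)\gtrsim H(t)\log t - \tfrac12 H(t)\log H(t) - CH(t)-\cdots$), obtained precisely because the favourable near-field term $\tfrac{1}{|te^3-y|}$ is kept rather than discarded; it is this log that eventually forces $H(t)\lesssim t$.

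There is a genuine gap at the matching step, which you correctly sense but whose proposed fix does not work. First, the compactness limit alone yields no contradiction: the limit is a two-dimensional global solution $w = p + V_\infty$ with an elliptical contact set $E$ of area $\geq 1$, contained in a fixed ball, containing the origin, and with blow-down $p$. Such $w$ exist in abundance (one may translate and dilate the ellipse of the correct eccentricity while keeping $0\in E$ and the blow-down equal to $p$), and the conditions $V_\infty(0)=\nabla V_\infty(0)=0$ are nothing more than the statement $0\in E$; they do not pin down the dilation parameter. Second, the bootstrap you describe is a fixed point, not a contraction. If one knows $\cC_t\subset B'_{Ct^\alpha}$ for some $\alpha\in(\tfrac12,1)$, then the sign-blind pointwise estimate (i.e.\ discarding the positive $\tfrac{1}{|x-y|}$ term, as in the proof of Lemma~\ref{lem:existence_of_newton_potential}) gives only $V_\cC(x)\geq -C(1+|x|)^{\max(1,2\alpha)}$, and combining this with $c_p|x'|^2\leq -V_\cC(x)$ on $\cC$ (where $|x|\asymp x_3=t$) returns $|x'|^2\lesssim t^{2\alpha}$, i.e.\ exactly the input exponent $\alpha$ again. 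Indeed your claimed first step, $H(t)\leq Ct^{7/4}$, is already strictly weaker than the bound $H(t)\leq t^{1+\delta}$ furnished by Lemma~\ref{lem:prop5.1}, so it yields no new information, and iterating cannot break past $\alpha$. Without the logarithmic improvement obtained from the signed, one-dimensionalized estimate along $\{x'=0\}$, the argument does not close.
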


\begin{proof}
We split the proof into 8 steps.
 \\
\textbf{Step 1.} \emph{Preliminary observations about sections of $\cC$}. \\
Recall the notation $\cC_t :=\{y' \in \R^2 : (y',t) \in \cC\}$ and $H(t) := \cH^{2}(\cC_t)$.

First, we claim  that $\sqrt{H}$ is a concave function, i.e. for all $\lambda \in [0,1]$, $t_1, t_2 \geq 0$
\begin{align}
\lambda \sqrt{H(t_1)} + (1-\lambda) \sqrt{H(t_2)} \leq \sqrt{H(\lambda t_1 + (1-\lambda) t_2)}.
\end{align}
Indeed, by the Brunn-Minkowski inequality (in $\R^2$)
\begin{align}
    \bra { \lambda \sqrt{\cH^{2}(\cC_{t_1})} + (1-\lambda) \sqrt{\cH^{2}(\cC_{t_2})}  }^2 \leq	\cH^{2}(\lambda \cC_{t_1} + (1-\lambda) \cC_{t_2} ).
\end{align}
Hence, since $\lambda \cC_{t_1} + (1-\lambda) \cC_{t_2} \subset \cC_{\lambda t_1 + (1-\lambda) t_2}$ (by the convexity of $\cC$), the claim follows. 

Now, 
the concavity of $\sqrt{H}$ together with the smoothness\footnote{Since $\cC$ is a convex set with non-empty interior, it follows from the regularity theory of the free boundary for the obstacle problem that $\partial\cC$ is smooth (see for instance \cite{Caffarelli-revisited}).
  However, if one does not want to rely on this result, it suffices to replace $(\sqrt{H})'$ with the right or left limit, respectively, of the derivative of $\sqrt{H}$, which always exists by the concavity of $\sqrt{H}$.}
of $\partial \cC$ implies that 
\begin{align} \label{eq:concavity_of_H_1}
(\sqrt{H})'(t) (t-s)   \leq \sqrt{H(t)} - \sqrt{H(s)} \leq (\sqrt{H})'(s) (t-s) \qquad \text{ for all }0 \leq s \leq t.
\end{align} 
In particular, since $H(0)\geq 0$,
\begin{align} \label{eq:concavity_of_H_2}
(\sqrt{H})'(t) \leq \frac{\sqrt{H(t)}}{t} \qquad  \text{ for all } t >0.
\end{align}
Furthermore, by the monotonicity of $u$ in the $x_3$-direction,
\begin{align} \label{eq:monotonicity_of_measure_of_sections}
0 \leq	H(s) \leq H(t) \qquad  \text{ for all } 0 \leq s \leq t.
\end{align}
Finally, from Lemma~\ref{lem:prop5.1} we infer that for every $\delta \in (0,1)$ there is $a(\delta)>0$ such that 
\begin{align} \label{eq:1+delta_estimate_for_H}
	H(t) \leq t^{1+\delta } \qquad  \text{ for all } t \geq a(\delta).
\end{align}
\textbf{Step 2.} \emph{The generalized Newtonian potential expansion.} \\
Let $V_\cC$ be the generalized Newtonian potential of the coincidence set $\cC$, cf. Definition \ref{NP}.
Thanks to Lemma~\ref{lem:sections_are_asymptotically_bounded_by_measure} and \eqref{eq:1+delta_estimate_for_H} it follows that, for $t > 0$  sufficiently large,
\begin{align}
	0 \leq \frac{1}{|t e^3 -y|} - \frac{1}{|y|} - \frac{t y_3}{|y|^3} \qquad \text{ for all } y \in \cR_t := \set { t - \sqrt{H(t)} < y_3 < t + \sqrt{H(t)} } \cap \cC.
\end{align}
Combining this with Proposition \ref{prop:Newton_potential_expansion}, Remark \ref{rem:coincidence_set_contains_ray} and Definition \ref{def:solution}(iv), we find that, for sufficiently large $t$,
\begin{align} \label{eq:estimate_of_tail_potential_outside_main_part}
	0 = u(t e^3) = V_\cC(t e^3) \geq \tilde{V}_{\cC}(t e^3) := \alpha_3 \int \limits_{\cC \setminus \cR_t} \bra { \frac{1}{|t e^3 -y|} - \frac{1}{|y|} - \frac{t y_3}{|y|^3} } \dx{y}.
\end{align}
\textbf{Step 3.} \emph{A one-dimensionalized version of $\tilde{V}_\cC$.} \\
The objective of this step is to replace the potential integral defining $\tilde{V}_\cC$ by a one-dimensional integral, up to a well-controlled error. To be more precise, we claim that there exists a constant $C$ such that, for all $t >0$ sufficiently large,
\begin{align} \label{eq:one-dimensionalitation_estimate}
\biggl| \int \limits_{(\cC \cap \{y_3 \geq a\}) \setminus \cR_t}   \bra { \frac{1}{|t e^3 -y|} - \frac{1}{|y|} - \frac{t y_3}{|y|^3} } \dx{y} - W(t) \biggr| \leq C( H(t) +{t}),
\end{align}
where 
\begin{align}
W(t):= \int \limits_{a}^{t- \sqrt{H(t)}}  \bra { \frac{1}{t-s} - \frac{1}{s} - \frac{t}{s^2}   } H(s) \dx{s} + \int \limits_{t + \sqrt{H(t)}}^\infty  \bra { \frac{1}{t-s} - \frac{1}{s} - \frac{t}{s^2}   } H(s) \dx{s},
\end{align}
$a := \max \{a_0, a(\delta) \}>0$ and $a_0$ is as in Lemma \ref{lem:sections_are_asymptotically_bounded_by_measure} and $a(\delta)$ is as in \eqref{eq:1+delta_estimate_for_H}. 

For the remainder of this step, fix a point $y=(y',s) \in \cC \setminus \cR_t$. Note that, by the definition of $\cR_t$, we have $|t-s| \geq \sqrt{H(t)}$.
 Combining Lemma \ref{lem:sections_are_asymptotically_bounded_by_measure}, \eqref{eq:concavity_of_H_1}, \eqref{eq:concavity_of_H_2}, and \eqref{eq:monotonicity_of_measure_of_sections},
 \begin{align}
|y'|^2 &\leq C H(s) \leq C H(t) \qquad \text{ for all }s \in (a, t-\sqrt{H(t)}) \label{eq:point_to_measure_1} \text{ and}\\
|y'|^2 &\leq C H(s) \leq C \biggl( \sqrt{H(t)} + \frac{\sqrt{H(t)}}{t}(s-t)  \biggr)^2 \\ & \leq C \biggl( H(t) + \frac{H(t)}{t^2} (s-t)^2 \biggr) \qquad \text{ for all } s \geq t + \sqrt{H(t)}. \label{eq:point_to_measure_2}
\end{align}
Let us now note that, by the mean value theorem, there is $\xi_{s,t} \in (0, |y'|^2)$ such that
\begin{multline} \label{eq:mean_vaiue_estimate}
\abs{ \frac{1}{\sqrt{(s-t)^2 + |y'|^2}} - \frac{1}{\sqrt{s^2 + |y'|^2}} - \frac{ts}{(s^2 + |y'|^2)^\frac{3}{2}} - \bra {  \frac{1}{|s-t|} - \frac{1}{s} - \frac{t}{s^2}    }       } \\
\leq \frac{1}{2} \abs{ \frac{1}{((s-t)^2 + \xi_{s,t})^\frac{3}{2}} - \frac{1}{(s^2 + \xi_{s,t})^\frac{3}{2} } -3 \frac{ts}{( s^2 + \xi_{s,t}  )^\frac{5}{2} }  } |y'|^2.
\end{multline}
For $s \in (a, t - \sqrt{H(t)})$, we can estimate the right-hand side above as
\begin{multline}
\frac{1}{2} \abs{ \frac{1}{((s-t)^2 + \xi_{s,t})^\frac{3}{2}} - \frac{1}{(s^2 + \xi_{s,t})^\frac{3}{2} } -3 \frac{ts}{( s^2 + \xi_{s,t}  )^\frac{5}{2} }  } |y'|^2 \\
\leq \frac{1}{2} \abs{ \frac{1}{({t-s})^3} + \frac{1}{s^3} + \frac{3 t s}{s^5}   } |y'|^2 
\leq 2 \bra { \frac{1}{({t-s})^3} + \frac{t}{s^4} } |y'|^2,
\end{multline}
so \eqref{eq:mean_vaiue_estimate} implies that
\begin{multline} \label{eq:mean_vaiue_estimate small}
\abs{ \frac{1}{\sqrt{(s-t)^2 + |y'|^2}} - \frac{1}{\sqrt{s^2 + |y'|^2}} - \frac{ts}{(s^2 + |y'|^2)^\frac{3}{2}} - \bra {  \frac{1}{|s-t|} - \frac{1}{s} - \frac{t}{s^2}    }       } \\
\leq  2 \bra { \frac{1}{({t-s})^3} + \frac{t}{s^4} } |y'|^2 \qquad \text{ for all } s \in (a, t - \sqrt{H(t)}).
\end{multline}
Now, combining first  \eqref{eq:mean_vaiue_estimate small},  \eqref{eq:point_to_measure_1}, \eqref{eq:1+delta_estimate_for_H} and \eqref{eq:monotonicity_of_measure_of_sections}, we see that
\begin{align}
&\Biggl| \int \limits_{\cC \cap \{a \leq y_3 \leq t-\sqrt{H(t)} \} } \bra{ \frac{1}{|t e^3-y|} - \frac{1}{|y|} - \frac{ty_3}{|y|^3}  } \dx{y} - \int \limits_{a}^{t- \sqrt{H(t)}}   \bra{ \frac{1}{t-s} - \frac{1}{2} - \frac{t}{s^2}  } H(s) \dx{s}    \Biggr| \\
&\leq C \int \limits_{a}^{t- \sqrt{H(t)}} \bra { \frac{1}{(t-s)^3} + \frac{t}{s^4} }  H(s)^2 \dx{s}\\
& \leq CH(t)^2  \int \limits_{a}^{t-\sqrt{H(t)}}  \frac{1}{(t-s)^3} \dx{s} +C t \int \limits_{a}^{t- \sqrt{H(t)}} \frac{s^{2+2\delta}}{s^4} \dx{s}    \leq C (H(t) +t). \label{eq:one-dimensionalization_estimate_1}
\end{align}
On the other hand, for $s \geq t + \sqrt{H(t)}$, 
we can apply the Taylor formula  $f(1) = f(0)+ f'(0) + \int_0^1 (1-\tau) f''(\tau) \dx{\tau}$ with $f(\tau) := ((s-t \tau)^2 + \xi_{s,t})^{-\frac{3}{2}}$ 
to get
\begin{align}
\abs{ \frac{1}{((s-t)^2 + \xi_{s,t})^\frac{3}{2}} - \frac{1}{(s^2 + \xi_{s,t})^\frac{3}{2} } -3 \frac{ts}{( s^2 + \xi_{s,t}  )^\frac{5}{2} }  } 
&\leq C \frac{t^2}{((s-t)^2 + \xi_{s,t})^\frac{5}{2}} \leq C\frac{t^2}{(s-t)^5},
\end{align}
and, by a direct estimate, we can estimate
\begin{align}
\abs{ \frac{1}{((s-t)^2 + \xi_{s,t})^\frac{3}{2}} - \frac{1}{(s^2 + \xi_{s,t})^\frac{3}{2} } -3 \frac{ts}{( s^2 + \xi_{s,t}  )^\frac{5}{2} }  }  &\leq \abs{ \frac{1}{(s-t)^3} + \frac{1}{s^3} + \frac{3 t}{s^4}   }  \leq C\frac{1}{(s-t)^3}.
\end{align}
Combining the last two inequalities with \eqref{eq:mean_vaiue_estimate} we get
\begin{multline} \label{eq:mean_value_estimate_2}
\abs{ \frac{1}{\sqrt{(s-t)^2 + |y'|^2}} - \frac{1}{\sqrt{s^2 + |y'|^2}} - \frac{ts}{(s^2 + |y'|^2)^\frac{3}{2}} - \bra {  \frac{1}{|s-t|} - \frac{1}{s} - \frac{t}{s^2}   }    }  \\
\leq C \min \set { \frac{1}{(s-t)^3} , \frac{t^2}{(s-t)^5} } |y'|^2\qquad \text{ for all }s \geq t + \sqrt{H(t)}.
\end{multline}
Moreover, using \eqref{eq:monotonicity_of_measure_of_sections}, \eqref{eq:point_to_measure_2}, \eqref{eq:mean_value_estimate_2} and \eqref{eq:1+delta_estimate_for_H} we estimate
\begin{align}
&\Biggl| \int \limits_{\cC \cap \{ y_3 \geq t+\sqrt{H(t)} \} } \bra{ \frac{1}{|t e^3-y|} - \frac{1}{|y|} - \frac{ty_3}{|y|^3}  } \dx{y} - \int \limits_{t+ \sqrt{H(t)}}^\infty   \bra{ \frac{1}{t-s} - \frac{1}{2} - \frac{t}{s^2}  } H(s) \dx{s}  \Biggr| \\ 
&\leq C \int \limits_{t + \sqrt{H(t)}}^\infty \min \set { \frac{1}{(s-t)^3} , \frac{t^2}{(s-t)^5} } H(s)^2 \dx{s} \\
&\leq C \int \limits_{t + \sqrt{H(t)}}^{2t} \frac{1}{(s-t)^3} H(s)^2 \dx{s} + C\int \limits_{2t}^\infty \frac{t^2}{(s-t)^5} H(s)^2 \dx{s}   \\
&\leq CH(2t)^2 \int \limits_{t+\sqrt{H(t)}}^{2t}  \frac{1}{(s-t)^3} \dx{s} + CH(t) \int \limits_{2t}^\infty \bra{\frac{t^2}{(s-t)^5} + \frac{1}{(s-t)^3}  } H(s) \dx{s}   \\
&\leq C \bra{H(t) + \frac{H(t)}{t^2}(2t-t)^2   }^2  \int \limits_{t+\sqrt{H(t)}}^{2t}  \frac{1}{(s-t)^3} \dx{s}  + CH(t) \int \limits_{2t}^\infty \bra{\frac{s^2}{\bra{\frac{s}{2}}^5} +\frac{1}{\bra{\frac{s}{2}}^3}   } s^{1+\delta} \dx{s}   \leq C H(t).
\end{align}
This bound, together with \eqref{eq:one-dimensionalization_estimate_1}, finishes the proof of \eqref{eq:one-dimensionalitation_estimate}.
\\
\textbf{Step 4.} \emph{Estimating $W$ from below.} \\
To simplify notation we set, for $t \geq a$,
\begin{align}
W_1(t) := \int \limits_{a}^{t- \sqrt{H(t)}} \bra{\frac{1}{t-s} - \frac{1}{s} - \frac{t}{s^2}  } H(s) \dx{s} , \qquad W_2(t) := \int \limits_{t + \sqrt{H(t)}}^\infty \bra{\frac{1}{s-t} - \frac{1}{s} - \frac{t}{s^2}  } H(s) \dx{s},
\end{align}
so that $W(t) = W_1(t)+W_2(t)$. 

To estimate $W_1$ from below, we split the integral so that the integrand in each part has a sign. More precisely, since
\begin{align}
\frac{1}{t-s} - \frac{1}{s} - \frac{t}{s^2} \leq 0 ~ \text{ for } s \in \pra{a, \frac{t}{\sqrt{2}} } \quad \text{ and } \quad \frac{1}{t-s} - \frac{1}{s} - \frac{t}{s^2}  \geq 0 ~ \text{ for } s \in \pra{\frac{t}{\sqrt{2}}, t- \sqrt{H(t)}  },
 \end{align}
we set $W_1 = W_{1,1} + W_{1,2}$ with
 \begin{align}
 W_{1,1}(t) :=  \int \limits_{a}^{t/\sqrt{2}} \bra{\frac{1}{t-s} - \frac{1}{s} - \frac{t}{s^2}  } H(s) \dx{s}, \qquad W_{1,2}(t) :=  \int \limits_{t/\sqrt{2}}^{t- \sqrt{H(t)}} \bra{\frac{1}{t-s} - \frac{1}{s} - \frac{t}{s^2}  } H(s) \dx{s}. 
 \end{align}
 We estimate $W_{1,1}$ by neglecting the first term, so to get
 \begin{align}
 W_{1,1}(t) \geq - \int \limits_{a}^{t/\sqrt{2}} \bra{\frac{1}{s}+ \frac{t}{s^2}  } H(s) \dx{s} \geq - \int \limits_{a}^t \frac{H(s)}{s} \dx{s} - t \int \limits_{a}^t \frac{H(s)}{s^2} \dx{s}.
 \end{align}
 To estimate $W_{1,2}$,
 using \eqref{eq:concavity_of_H_1} and \eqref{eq:monotonicity_of_measure_of_sections} as well as\eqref{eq:concavity_of_H_2} we obtain
 \begin{align}
 W_{1,2}(t) &= H(t)  \int \limits_{t/\sqrt{2}}^{t- \sqrt{H(t)}} \frac{1}{t-s} \dx{s} +  \int \limits_{t/\sqrt{2}}^{t- \sqrt{H(t)}} \frac{H(s) - H(t)}{t-s} \dx{s} -  \int \limits_{t/\sqrt{2}}^{t- \sqrt{H(t)}} \bra{\frac{1}{s} + \frac{t}{s^2}  } H(s) \dx{s} \\
 &\geq   H(t) \bra{\log(t) + \log\big(1-\tfrac{1}{\sqrt{2}}\big) } 
- \tfrac{1}{2} H(t) \log(H(t))  \\ &\quad-  \int \limits_{t/\sqrt{2}}^{t- \sqrt{H(t)}} \big(\sqrt{H(t)} + \sqrt{H(s)}\big) \frac{\sqrt{H(t)} - \sqrt{H(s)}}{t-s} \dx{s} -CH(t)\int \limits_{t/\sqrt{2}}^{t- \sqrt{H(t)}} \bra{\frac{1}{s} + \frac{t}{s^2}  }  \dx{s} \\
&\geq H(t) \log(t) - \tfrac{1}{2} H(t) \log(H(t))   -2 \sqrt{H(t)}  \int \limits_{t/\sqrt{2}}^{t- \sqrt{H(t)}} \frac{\sqrt{H(t)} - \sqrt{H(s)}}{t-s} \dx{s}     -CH(t) \\
&\geq H(t) \log(t) - \tfrac{1}{2} H(t) \log(H(t))   -2 \sqrt{H(t)}  \int \limits_{t/\sqrt{2}}^{t- \sqrt{H(t)}} (\sqrt{H})'(s) \dx{s}     -CH(t) \\
&\geq H(t) \log(t) - \tfrac{1}{2} H(t) \log(H(t))    -CH(t).
 \end{align}
 It remains to estimate $W_2(t)$. Since the integrand in $W_2$ is nonnegative,
 using \eqref{eq:monotonicity_of_measure_of_sections} we obtain that
 \begin{align}
 W_2(t) \geq H(t)   \int \limits_{t + \sqrt{H(t)}}^\infty \bra{\frac{1}{s-t} - \frac{1}{s} - \frac{t}{s^2}  }  \dx{s} \geq H(t) \log(t) - \tfrac{1}{2} H(t) \log(H(t)) - H(t).
 \end{align} 
Combining all estimates,  we find the lower bound
 \begin{align} \label{eq:estimate_of_W}
 W(t) \geq 2 H(t) \log(t) - H(t) \log(H(t)) - CH(t)  - \int \limits_{a}^t \frac{H(s)}{s} \dx{s} - t \int \limits_{a}^t \frac{H(s)}{s^2} \dx{s}.
 \end{align}
 \textbf{Step 5.} \emph{An integral inequality.} \\
 Combining \eqref{eq:estimate_of_tail_potential_outside_main_part}, \eqref{eq:one-dimensionalitation_estimate}, and \eqref{eq:estimate_of_W}, we deduce the existence of a constant $C$ such that, for sufficiently large $t$,
 \begin{align}
 C (H(t) + t) \geq 2 H(t) \log(t) - H(t) \log(H(t))  - \int \limits_{a}^t \frac{H(s)}{s} \dx{s} - t \int \limits_{a}^t \frac{H(s)}{s^2} \dx{s}
 \end{align}
which implies in particular that
 \begin{align}
 	C (H(t) + t) \geq & 2 (H(t) + t) \log(t) - (H(t)+t) \log(H(t) +t)\\
 	& - \int \limits_{a}^t \frac{H(s)+s}{s} \dx{s} -t \int \limits_{a}^t \frac{H(s)+s}{s^2} \dx{s}.
 \end{align}
 Hence, setting $\psi(t) := \tfrac{H(t)+t}{t}$, we find that 
 \begin{multline}
 C \psi(t) \geq \psi(t) \log(t) - \psi(t) \log(\psi(t)) - \frac{1}{t} \int \limits_{a}^t \psi(s) \dx{s} - \int \limits_{a}^t \frac{\psi(s)}{s} \dx{s} \\
 \geq -\psi(t) \log(\psi(t))- \frac{1}{t} \int \limits_{a}^t \psi(s) \dx{s} +\int \limits_{a}^t \frac{\psi(t)-\psi(s)}{s} \dx{s}\qquad \text{ for all $t$ sufficiently large}. \label{eq:integral_inequality_for_psi}
 \end{multline}
 Our goal in the following is to show that $\psi$ is bounded. To this end let us replace $\psi$ by the monotone function
 \begin{align}
 	\Psi(t) := \sup \limits_{s \in [a,t]} \psi(s).
 \end{align}
 In order to find an integral inequality for $\Psi$, given $t \geq a$ let $\tau=\tau(t) \in [a,t]$ be such that $\Psi(t) = \psi(\tau)$. Then, by the definition of $\Psi$,
 \begin{align} \label{eq:Psi_const_after_max}
 	\Psi(s) = \Psi(t)  \qquad \text{ for all }  s \in [\tau, t].
 \end{align}
Noticing that $\tfrac{1}{t} \int_{a}^t \Psi(s) \dx{s} \geq \tfrac{1}{\tau} \int_{a}^\tau \Psi(s) \dx{s}$ (since $\Psi$ is increasing) and that $\Psi(t) \geq \psi(t)$ by construction, we deduce that
 \begin{align}
 C \Psi(t) + \Psi(t) \log(\Psi(t)) + \frac{1}{t} \int \limits_{a}^t \Psi(s) \dx{s}  \geq C \psi(\tau) + \psi(\tau) \log(\psi(\tau)) + \frac{1}{\tau} \int \limits_{a}^\tau \psi(s) \dx{s}.
 \end{align}
Thus, since $\Psi(s) \geq \psi(s)$ for all $s \geq a$, it follows from \eqref{eq:integral_inequality_for_psi} and \eqref{eq:Psi_const_after_max} that 
 \begin{align}
 C \Psi(t) + \Psi(t) \log(\Psi(t)) + \frac{1}{t} \int \limits_{a}^t \Psi(s) \dx{s} &\geq \int \limits_{a}^\tau \frac{\psi(\tau) - \psi(s)}{s} \dx{s} =\int \limits_{a}^\tau \frac{\Psi(t) - \psi(s)}{s} \dx{s} \\
 &\geq \int \limits_{a}^\tau \frac{\Psi(t) - \Psi(s)}{s} \dx{s} = \int \limits_{a}^t \frac{\Psi(t) - \Psi(s)}{s} \dx{s}.
 \end{align}
 Since $\tfrac{1}{t} \int_{a}^t \Psi(s) \dx{s} \leq \Psi(t)$ (by the monotonicity of $\Psi$), we can simplify the relation above to conclude that
 \begin{align} \label{eq:integral_inequality_for Psi}
 C \Psi(t) + \Psi(t) \log(\Psi(t)) \geq \Psi(t) \log(t) - \int \limits_{a}^t \frac{\Psi(s)}{s} \dx{s}.
 \end{align}  
 \textbf{Step 6.} \emph{Switching to a differential inequality and comparison.} \\
Define $F(t) := \int_{a}^t \tfrac{\Psi(s)}{s} \dx{s}$. Then $F'(t) = \tfrac{\Psi(t)}{t} >0$ and \eqref{eq:integral_inequality_for Psi} becomes
 \begin{align} \label{eq:differential_inequality_for_F}
-C F'(t) - F'(t) \log(F'(t)) \leq \frac{F(t)}{t}.
 \end{align}
Since $H(t) \leq t^{1+\delta}$ (see \eqref{eq:1+delta_estimate_for_H}) it follows that $\psi(t) \leq t^\delta$. Therefore $\Psi(t) \leq t^\delta$, from which it follows that
 \begin{align} \label{eq:a-priori_estimates_for_F,F_prime}
F'(t) \leq t^{\delta-1} \quad \text{ and } \quad F(t) \leq \frac{1}{\delta} t^\delta.
 \end{align}
In particular, this yields 
$$
F'(t) \to 0\quad \text{ and }\quad 0> F'(t) \log(F'(t)) \to 0\qquad \text{ as }t \to \infty.
$$
 Note now that, for $\tau_0>0$ small enough, the function
 $$
 h:(0,\tau_0) \to (0,\infty),\qquad h(\tau) := -C \tau - \tau \log(\tau)=-C \tau+\tau|\log \tau|
 $$ 
is strictly increasing, invertible, and has a locally Lipschitz-continuous inverse. 
 This implies that, for sufficiently large $t_0>0$, the ordinary differential inequality \eqref{eq:differential_inequality_for_F} enjoys the comparison principle, i.e. if $G:(t_0,\infty) \to \infty$ satisfies
 \begin{align}
 	h(G'(t)) \geq \frac{G(t)}{t} \quad \text{ for all } t \geq t_0 \quad \text{ and } \quad F(t_0) \leq G(t_0)
 \end{align}
 then $F(t) \leq G(t)$ for all $t \geq t_0$. \\
 \textbf{Step 7.} \emph{Construction of a comparison solution}. \\
 Let $A>1, B>0$, and define $G_{A,B}: (0, \infty) \to \R$ as $G_{A,B}(t) := A \log(t) -B$. Then, for $B:= A(C + \log(A))$,
 \begin{align}
h(G'_{A,B}(t) ) & = -C \frac{A}{t} - \frac{A}{t} \log \bra{\frac{A}{t}  }  = - \frac{A}{t} \bra{C + \log(A) } + \frac{A \log(t)}{t} \\
& = \frac{A \log(t) -B}{t} = \frac{G_{A,B}(t)}{t}\qquad \text{ for all }t>0.
 \end{align}
Fix now $\delta \in (0,\tfrac{1}{2})$ and define $A:=t_0^{2\delta}$. Then, if $t_0$ is chosen sufficiently large, thanks to \eqref{eq:a-priori_estimates_for_F,F_prime} we get
 \begin{align}
G_{A,B}(t_0) = t_0^{2\delta} \log(t_0) - 2\delta t_0^{2\delta} \log(t_0) -C t_0^{2\delta} = \big((2-2\delta) \log(t_0) -C\big
) t_0^{2\delta} \geq F(t_0).
 \end{align}
 \textbf{Step 8.} \emph{Conclusion.} \\
By the comparison principle mentioned in Step 6, choosing $A$ and $B$ as in Step 7 we deduce that
 \begin{align} \label{eq:estimate_for_F}
 F(t) \leq G_{A,B}(t) \leq A \log(t)  \qquad \text{ for all }  t \geq t_0.
 \end{align} 
 Also, for all $0<x,y \ll 1$,
 \begin{align}
 y \geq h(x) = -C x + x |\log(x)| \quad \Longrightarrow \quad x \leq \frac{2y}{|\log(x)|} \leq \frac{2y}{|\log(y)|}=\frac{2y}{-\log(y)}.
 \end{align}
 Hence, recalling \eqref{eq:differential_inequality_for_F}, \eqref{eq:estimate_for_F} and \eqref{eq:a-priori_estimates_for_F,F_prime}, for $t \geq t_0$ we obtain
 \begin{align}
 F'(t) \leq 2 \frac{\frac{F(t)}{t}}{-\log \bra{\frac{F(t)}{t}  }} \leq 2 \frac{\frac{A \log(t)}{t}}{- \log \bra{\frac{A \log(t)}{t}  }} = \frac{2 A \log(t)}{ t \big( \log(t) - \log(\log(t)) - \log(A)\big) } \leq \frac{{4}A}{t},
 \end{align}
 provided that $t_0$ has been chosen sufficiently large.
 Recalling that $F'(t) = \tfrac{\Psi(t)}{t} \geq \tfrac{\psi(t)}{t}$ this implies that
 \begin{align}
 \frac{H(t)+t}{t} = \psi(t) \leq {4}A  \qquad \text{ for all }  t \geq t_0
 \end{align}
which concludes the proof.
\end{proof}

\section{Linear and almost-linear behavior of $V_{\cC}$} \label{sec:BMO-type_estimate}

In this section we prove that, 
 for $N\geq 4$, the generalized Newtonian potential $V_\cC$ can be written as the sum of a linear function and a correction with sublinear growth towards infinity.
 In contrast, for  $N=3$, 
the best one can show is the following {\em BMO-type property}:
on every large ball $B_R$ there exists an affine function $A_\cC^R$ whose slope grows like $\log R$ and whose average distance from $V_\cC$ is of order $R$.
 
We begin with the case $N\geq 4.$
\begin{lem}[Asymptotic growth of the Generalized Newtonian Potential
in dimension $N\geq 4$] 
\label{lem:improvement_of_growth_N4}
Let $N\geq 4$, let $u$ be an $x_N$-monotone solution in the sense of Definition \ref{def:solution}, and let $V_\cC$ be the generalized Newtonian potential (as defined in \eqref{eq:generalized Newtonian potential}) of the coincidence set $\cC$.
Then $V_{\cC}$ can be written as $V_{\cC}:= W_{\cC}-\ell_{\cC}$ with
$$
W_{\cC}(x):=\alpha_N\int_{\cC} \biggl(\frac{1}{\abs{x-y}^{N-2}} - \frac{1}{\abs{y}^{N-2}}\bigg)\dx{y},
\qquad \ell_\cC(x):=x\cdot \biggl(\alpha_N(N-2)\int_{\cC}\frac{y}{ \abs{y}^N}\dx{y}\biggr),
$$
where both integrals are well-defined.
Also, there exists a constant $C$ such that
\begin{align}
\label{eq:WA R}
\fint \limits_{B_R}  \abs { W_\cC(x)} \dx{x} \leq C R^{1/2}\qquad \text{ for all $R$ sufficiently large}.
\end{align}
\end{lem}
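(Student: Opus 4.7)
The plan is to first exploit the paraboloid containment from Proposition~\ref{prop:C_is_contained_in_paraboloid} to derive the quantitative slice bound $\cH^{N-1}(\cC \cap \{y_N = t\}) \leq C t^{(N-1)/2}$ for $t \geq a_0$, combined with the boundedness of $\cC \cap \{x_N \leq a_0\}$. A layer-cake computation then yields
$$
\int_{\cC \setminus B_1} \frac{dy}{|y|^{N-1}} \leq C + C\int_{a_0}^\infty t^{(N-1)/2 - (N-1)}\,dt = C + C\int_{a_0}^\infty t^{-(N-1)/2}\,dt,
$$
which converges precisely because $N \geq 4$. This shows that the vector $\int_\cC y\,|y|^{-N}\,dy$ appearing in $\ell_\cC$ is well-defined, and combined with the elementary Taylor bound $\bigl|\,|x-y|^{2-N} - |y|^{2-N}\,\bigr| \leq C\,|x|\,|y|^{1-N}$ valid for $|y| \geq 2|x|$, the integral defining $W_\cC(x)$ also converges absolutely for every $x$. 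The identity $V_\cC = W_\cC - \ell_\cC$ then follows by simply regrouping the three pieces in the definition of $G(x,y)$ used to define $V_\cC$.

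For the averaged estimate, I would split $\cC = (\cC \cap B_{2R}) \cup (\cC \setminus B_{2R})$ and accordingly decompose $W_\cC(x) = W_\cC^{\rm near}(x) + W_\cC^{\rm far}(x)$. On $x \in B_R$, the same Taylor bound gives
$$
|W_\cC^{\rm far}(x)| \leq C R \int_{\cC \setminus B_{2R}} \frac{dy}{|y|^{N-1}} \leq CR \int_R^\infty t^{-(N-1)/2}\,dt,
$$
which is $\leq C R^{1/2}$ when $N = 4$ and decays faster when $N \geq 5$; in particular the pointwise (and hence averaged) $L^\infty$-bound on $B_R$ is already of the right order.

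For the near part, I would estimate each of the two summands of $W_\cC^{\rm near}$ separately. Using Fubini and the uniform bound $\int_{B_R} |x-y|^{2-N}\,dx \leq C R^2$ together with $|\cC \cap B_{2R}| \leq C R^{(N+1)/2}$ (again by the slice bound), I get
$$
\fint_{B_R} \int_{\cC \cap B_{2R}} \frac{dy}{|x-y|^{N-2}}\,dx \leq C R^{2-N} |\cC \cap B_{2R}| \leq C R^{(5-N)/2},
$$
which is $\leq C R^{1/2}$ for $N = 4$ and uniformly bounded for $N \geq 5$. The constant-in-$x$ term $\int_{\cC \cap B_{2R}} |y|^{2-N}\,dy$ is controlled by layer-cake by $C \int_{a_0}^{2R} t^{(3-N)/2}\,dt$, which is $\leq C R^{1/2}$ for $N = 4$, $\leq C \log R$ for $N = 5$, and $O(1)$ for $N \geq 6$. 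Collecting all contributions produces the stated bound.

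There is no genuine obstacle: the entire argument is a bookkeeping exercise around the paraboloid slice bound, and the dimensional hypothesis $N \geq 4$ enters only through convergence of $\int^\infty t^{-(N-1)/2}\,dt$. The sole minor technicality is the separate treatment of $\cC \cap \{x_N \leq a_0\}$, which is bounded by Proposition~\ref{prop:C_is_contained_in_paraboloid}\eqref{item:boundedness of coincidence_set_around_zero} and therefore contributes only a harmless $O(1)$ to every integral above.
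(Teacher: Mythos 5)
Your proposal is correct and follows essentially the same route as the paper: both use the paraboloid containment of Proposition~\ref{prop:C_is_contained_in_paraboloid} together with the Taylor bound $\bigl|\,|x-y|^{2-N}-|y|^{2-N}\,\bigr|\leq C|x|\,|y|^{1-N}$ for $|y|>2|x|$, plus Fubini combined with $\int_{B_R}|x-y|^{2-N}\,dx\leq CR^2$ to control the near part. The only cosmetic difference is that the paper rescales $x=Rz$, $y\mapsto Ry$ to a unit ball before applying Fubini (which it reuses verbatim in the $N=3$ case), whereas you estimate directly on $B_R$; the two are algebraically equivalent.
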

\begin{proof}
Since $V_{\cC}$ is well-defined, it suffices to show that $\ell_C$ is well-defined.
For this, we need to prove that $\frac{y}{ \abs{y}^N}\chi_{\cC} \in L^1(\R^N)$.

Thanks to Proposition~\ref{prop:C_is_contained_in_paraboloid}, and recalling that $N\geq 4$, as  $\frac1{|y|}\leq \frac1{y_N}$ we can estimate
\begin{align}
\int_{\cC} \frac{1}{|y|^{N-1}}\dx{y}&\leq \int_{\cC\cap \{y_N\leq a_0\}}\frac{1}{|y|^{N-1}}\dx{y}+\int_{\{y_N>a_0\}\cap \{|y'|^2 < \gamma_0 y_N\}}\frac{1}{|y|^{N-1}}\dx{y}\\
&\leq C+\int_{a_0}^\infty \cH^{N-1}(B_{\sqrt{\gamma_0 t}}')\frac1{t^{N-1}}\dx{t} \leq C\biggl(1+\int_{a_0}^\infty t^{(N-1)/2+(1-N)}\dx{t}\biggr) \\
&\leq C\biggl(1+\int_{a_0}^\infty t^{-3/2}\dx{t}\biggr)<\infty,
\end{align}
which proves that $\ell_\cC(x)$ is a well-defined linear function.

Now, to prove \eqref{eq:WA R}, we note that by the mean value theorem, 
\begin{equation}
\label{eq:TaylorW}
\biggl|\frac{1}{\abs{x-y}^{N-2}} - \frac{1}{\abs{y}^{N-2}}\biggr| \leq C\frac{|x|}{|y|^{N-1}}\qquad \text{for }|y|>2|x|.
\end{equation}
Hence, given $x \in B_R$ with sufficiently large $R$, we can write
\begin{align}
\abs { W_\cC(x)}&\leq  \alpha_N \int_{\mathcal C\cap \{0\leq y_N \leq a\}}\biggl|\frac{1}{\abs{x-y}^{N-2}} - \frac{1}{\abs{y}^{N-2}}\biggr|\dx{y}\\
&\qquad +\alpha_N \int_{\mathcal C\cap \{a\leq y_N \leq 2R\}}\biggl|\frac{1}{\abs{x-y}^{N-2}} - \frac{1}{\abs{y}^{N-2}}\biggr|\dx{y}\\
&\qquad+ \alpha_N \int_{\mathcal C\cap \{y_N \geq 2R\}}\biggl|\frac{1}{\abs{x-y}^{N-2}} - \frac{1}{\abs{y}^{N-2}}\biggr|\dx{y}=:J_1(x)+J_2(x)+J_3(x).
\end{align}
Thanks to \eqref{eq:TaylorW} and Proposition~\ref{prop:C_is_contained_in_paraboloid}(i), we can estimate
\begin{align}
J_3(x)\leq C|x|\int_{2R}^\infty \cH^{N-1}(\{|y'|^2<\gamma_0 t\}) \frac{1}{t^{N-1}}\dx{t}\leq CR\int_{2R}^\infty t^{-3/2}\dx{t}\leq CR^{1/2}.
\end{align}
Also, thanks to Proposition~\ref{prop:C_is_contained_in_paraboloid}(ii), it follows that $|J_1(x)|\leq C$.
Finally, for $J_2(x)$ we have 
\begin{align}
J_2(x)
&\leq \alpha_N \int_{\{|y'|^2 < \gamma_0 y_N\}\cap \{0\leq y_N\leq 2R\}}\frac{1}{\abs{x-y}^{N-2}}\dx{y}+C \int_{a_0}^{2R} \cH^{N-1}(B_{\sqrt{\gamma_0 t}}')\frac1{t^{N-2}}\dx{t}\\
& \leq \alpha_N \int_{\{|y'|^2 < 2\gamma_0 R\}\cap \{0\leq y_N\leq 2R\}}\frac{1}{\abs{x-y}^{N-2}}\dx{y}+CR^{1/2}.
\end{align}
Now, for the first term on the right hand side,
if we write $x=Rz$ with $z \in B_1$ and we perform the change of variables $y\mapsto Ry$, we see that
\begin{multline}\label{eq:x to z}
\int_{\{|y'|^2 < 2\gamma_0 R\}\cap \{0\leq y_N\leq 2R\}}\frac{1}{\abs{x-y}^{N-2}}\dx{y}=R^N\int_{\left\{|y'|^2<\frac{2\gamma_0}{R}\right\}\cap \{0\leq y_N\leq 2\}}
\frac{1}{\abs{x-Ry}^{N-2}} \dx{y}\\
=R^2\int_{\left\{|y'|^2<\frac{2\gamma_0}{R}\right\}\cap \{0\leq y_N\leq 2\}}\frac{1}{\abs{z-y}^{N-2}} \dx{y}.
\end{multline}
Hence, combining all these bounds, we obtain that 
\begin{align}
\fint \limits_{B_R} |W_\cC(x)| \dx{x}&\leq CR^{1/2}+  \fint \limits_{B_R} J_{{2}}(x)\dx{x}\\
&\leq CR^{1/2}+ \alpha_N R^2 \fint \limits_{B_1}\int_{\left\{|y'|^2\leq \frac{2\gamma_0}{R}\right\}\cap \{0\leq y_N\leq 2\}}\frac{1}{\abs{z-y}^{N-2}} \dx{y} \dx{z}.
\end{align}
Concerning the last integral we observe that
\begin{equation}
\label{eq:int G on B1}
\int_{B_1} \frac{1}{|z-y|^{N-2}}\dx{z}\leq C\qquad \text{ for all }y \in \R^N.
\end{equation}
Thus, since $\left|\left\{|y'|^2<\frac{2\gamma_0}{R}\right\}\cap \{0\leq y_N\leq 2\}  \right| \leq \frac{C}{R^{3/2}}$ for $N\geq 4$, it follows from  Fubini's Theorem that
\begin{align}
\fint \limits_{B_R}  \abs { W_\cC(x)    } \dx{x} 
&\leq CR^{1/2}+ \alpha_N R^2 \int_{\left\{|y'|^2<\frac{2\gamma_0}{R}\right\}\cap \{0\leq y_N\leq 2\} } \fint \limits_{B_1} \frac{1}{|z-y|^{N-2}}\dx{z} \dx{y}\\
&\leq CR^{1/2}+ CR^2 \left|\left\{|y'|^2<\frac{2\gamma_0}{R}\right\}\cap \{0\leq y_N\leq 2\}  \right| \leq CR^{1/2}.
\end{align}
\end{proof}

We now focus on the three-dimensional case.

\begin{lem}[BMO-type estimate in dimension $N=3$] \label{lem:improvement_of_growth_by_subtracting_affine_linear_function}
Let $N=3$, let $u$ be an $x_N$-monotone solution in the sense of Definition \ref{def:solution}, and let $V_\cC$ be the generalized Newtonian potential (as defined in Definition \ref{NP}) of the coincidence set $\cC$.
Then, for each $R>0$ the affine function $A^R_\cC$ given by
\begin{align} \label{eq:definition_of_A_R}
A^R_\cC(x) := \alpha_3 \int \limits_\cC \bra { - \frac{1}{|y|} + \frac{1}{|R e^3 +y|} - \frac{x \cdot y}{|y|^3} + \frac{(R e^3 +y) \cdot (x+ R e^3)}{|R e^3 +y|^3}              }	\dx{y}
\end{align}
is well-defined.
Also, there exists a constant $C$ such that
\begin{align}
\label{eq:VA R}
\fint \limits_{B_R}  \abs { \bV_\cC(x) -A^R_\cC(x)    } \dx{x} \leq C R\qquad \text{ for all $R$ sufficiently large}.
\end{align}
\end{lem}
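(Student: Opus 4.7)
The plan is to recognize $A^R_\cC$ as a second Taylor expansion of the Newton kernel around a shifted base point. A direct term-by-term comparison shows that the integrand defining $A^R_\cC(x)$ is exactly the difference between the first-order Taylor polynomial of $\frac{1}{|x-y|}$ at $x=-Re^3$ (namely $\frac{1}{|Re^3+y|}+\frac{(Re^3+y)\cdot(x+Re^3)}{|Re^3+y|^3}$) and the first-order Taylor polynomial at $x=0$ (namely $\frac{1}{|y|}+\frac{x\cdot y}{|y|^3}$). Consequently I obtain the identity
\begin{equation*}
V_\cC(x)-A_\cC^R(x)=\alpha_3\int_\cC G\bigl(x+R e^3,\,R e^3+y\bigr)\,\dx{y},
\end{equation*}
and after the change of variables $w:=y+R e^3$, $z:=x+R e^3$, the right-hand side is the generalized Newtonian potential of the translated coincidence set $\cC_R:=\cC+R e^3$, evaluated at $z\in B_R(R e^3)$. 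Two observations are key: since $\cC\subset\{y_3\geq 0\}$ one has $|w|\geq w_3\geq R$ for all $w\in\cC_R$, and $|z|\leq 2R$ for $z\in B_R(R e^3)$. The proof of \eqref{eq:VA R} thus reduces to showing $\fint_{B_R(R e^3)}\bigl|\int_{\cC_R}G(z,w)\,\dx{w}\bigr|\,\dx{z}\leq CR$, which I handle by splitting $\cC_R$ into a near part $\cC_R\cap B_{4R}$ and a far part $\cC_R\setminus B_{4R}$.

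On the far part $|w|>4R\geq 2|z|$, so the Taylor bound \eqref{eq:Taylor G} gives $|G(z,w)|\leq CR^2/|w|^3$. By Proposition~\ref{prop:C_is_contained_in_paraboloid}, $\cC_R\subset\{|w'|^2<\gamma_0(w_3-R)\}$ above a threshold in $w_3$, so the height-$w_3$ section has $\cH^2$-measure at most $Cw_3$; combined with $|w|\geq w_3$ this yields $\int_{\cC_R\setminus B_{4R}}|w|^{-3}\,\dx{w}\leq C/R$, and the far contribution is $\leq CR$ uniformly in $z$. On the near part I use the crude bound $|G(z,w)|\leq|z-w|^{-1}+|w|^{-1}+|z|\,|w|^{-2}$. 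The volume bound $|\cC_R\cap B_{4R}|\leq CR^2$ (from the paraboloid inclusion) combined with $|w|\geq R$ controls the second term by $CR$ pointwise in $z$; for the third, the one-dimensional computation $\int_R^{4R}(w_3-R)/w_3^2\,\dx{w_3}=O(1)$ gives $\int_{\cC_R\cap B_{4R}}|w|^{-2}\,\dx{w}=O(1)$, so multiplied by $|z|\leq 2R$ this also contributes $CR$.

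The only remaining piece is the singular term $\int_{\cC_R\cap B_{4R}}|z-w|^{-1}\,\dx{w}$, which I expect to be the main technical point because neither a pointwise bound in $z$ nor the Taylor remainder are available. By Fubini,
\begin{equation*}
\fint_{B_R(R e^3)}\int_{\cC_R\cap B_{4R}}\frac{\dx{w}\,\dx{z}}{|z-w|}=\int_{\cC_R\cap B_{4R}}\Bigl(\fint_{B_R(R e^3)}\frac{\dx{z}}{|z-w|}\Bigr)\,\dx{w},
\end{equation*}
and standard three-dimensional estimates give $\int_{B_R(R e^3)}|z-w|^{-1}\,\dx{z}\leq CR^2$ uniformly in $w\in B_{4R}$, hence $\fint_{B_R(R e^3)}|z-w|^{-1}\,\dx{z}\leq C/R$; multiplied by $|\cC_R\cap B_{4R}|\leq CR^2$ this yields the desired contribution of order $R$. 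Assembling the four estimates proves \eqref{eq:VA R}, and the well-definedness of $A^R_\cC$ follows as a byproduct because the same calculations simultaneously show that $\int_\cC|G(x+R e^3,R e^3+y)|\,\dx{y}$ is finite pointwise in $x$, so $A^R_\cC(x)=V_\cC(x)-\alpha_3\int_\cC G(x+R e^3,R e^3+y)\,\dx{y}$ is finite by Lemma~\ref{lem:existence_of_newton_potential}. The conceptual reason the bound here is only $CR$ rather than the sublinear bound of Lemma~\ref{lem:improvement_of_growth_N4} is that in dimension three the ``natural'' linear correction $x\mapsto x\cdot\alpha_3\int_\cC y/|y|^3\,\dx{y}$ is logarithmically divergent; the moving base point $-R e^3$ in the definition of $A^R_\cC$ provides exactly the $R$-dependent regularization needed to absorb this log tail.
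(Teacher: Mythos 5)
Your proof is correct, and the organizing observation is a clean one that the paper leaves implicit. You notice that
\begin{equation*}
G(x,y)-a^R(x,y)=G(x+Re^3,\,Re^3+y),
\end{equation*}
which says that $A^R_\cC$ subtracts the first-order Taylor polynomial of the Newtonian kernel taken around the moving base point $-Re^3$, so that after translating both the evaluation point and the domain by $Re^3$ the remainder becomes literally the generalized potential of $\cC_R:=\cC+Re^3$, a set on which $|w|\geq R$. The paper's own Step 2 instead uses the scaling identity $G(Rz,Ry)-a^R(Rz,Ry)=\tfrac1R[G(z,y)-a^1(z,y)]$ to reduce the near contribution to scale one; this is equivalent to your translation identity composed with the homogeneity $G(\gamma x,\gamma y)=\gamma^{-1}G(x,y)$, and leads to the same ingredients: a Taylor tail bound for $|y|\gtrsim R$ (your far part, the paper's $J_2$), and a Fubini exchange plus $\int_{B}\abs{z-w}^{-1}\,\dx{z}\lesssim R^2$ for the singular term over the region where $\cC$ lives at scale $R$ (your near part, the paper's rescaled $J_1$). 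So the two proofs are the same modulo reparametrization, but your formulation is conceptually tighter: it makes manifest why the moving base point $-Re^3$ provides exactly the logarithmic regularization needed in dimension three, which the paper only explains in the surrounding prose. One small remark: your claim that $A^R_\cC$ is affine is justified only implicitly — it follows because the integrand $a^R(\cdot,y)$ is affine in $x$ and you have shown absolute convergence of $\int_\cC\abs{a^R(x,y)}\,\dx{y}$ for every $x$; the paper's Step 1 spells out the companion fact that $\nabla A^R_\cC$ is a convergent integral, which is worth stating since \eqref{eq:DAR} is used later in Lemma~\ref{lem:DA}.
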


\begin{proof}
We split the proof into two steps.\\
\textbf{Step 1.} \emph{$A^R_\cC$ is well-defined and affine.} 	\\
Set 
\begin{equation}
\label{eq:ell R}
a^R(x,y):=- \frac{1}{|y|} + \frac{1}{|R e^3 +y|} - \frac{x \cdot y}{|y|^3} + \frac{(R e^3 +y) \cdot (x+ R e^3)}{|R e^3 +y|^3}
\end{equation}
and write
$$
\int_{\cC}|a^R(x,y)|\dx{y}\leq \int_{\mathcal C\cap \{y_3 \leq 2R\}}|a^R(x,y)|\dx{y}+\int_{\mathcal C\cap \{y_3 \geq 2R\}}|a^R(x,y)|\dx{y}=:I_1+I_2.
$$
Since $a^R(x,\cdot) \in L^1_{\rm loc}(\R^3)$
and ${\mathcal C\cap \{y_3 \leq 2R\}}$ is bounded (thanks to Lemma~\ref{lem:prop5.1}), it follows that $I_1 \leq C_R$ for some constant depending on $R$. To estimate $I_2$, by a Taylor expansion as well as the mean value theorem we have that
\begin{align}
|a^R(x,y)|&\leq 
\abs{ \frac{1}{|R e^3 +y|} - \frac{1}{|y|} +\frac{(R e^3) \cdot y}{|y|^3} }
+ \frac{|R e^3 \cdot (x+ R e^3)|}{|R e^3 + y|^3}\\ \label{eq:Taylor ell}
&\quad + \abs { (R e^3 +x) \cdot y \bra {  \frac{1}{|y|^3} - \frac{1}{|R e^3 +y|^3}   }  } \\
&\leq C\frac{R^2}{ \abs{y}^{3}}+C\frac{R(R+|x|)}{ \abs{y}^{3}}+ C\frac{R(R+|x|)}{ \abs{y}^{3}}\qquad \text{for }|y| > 2R.
\end{align}
Hence, recalling Proposition~\ref{prop:C_is_contained_in_paraboloid}\eqref{item:growth_of_coincidence_set}, we can estimate
\begin{multline}
\label{eq:I2 ell}
I_2\leq CR(R+|x|)\int_{\mathcal C\cap \{y_3 \geq 2R\}} \frac{1}{|y|^3}\dx{y}
\leq CR(R+|x|)\int_{2R}^\infty \cH^2(\{|{y'}|^2<\gamma_0 t\}) \frac{1}{t^3}\dx{t}\\
\leq CR(R+|x|)\int_{2R}^\infty \frac{1}{t^2}\dx{t}\leq C(R+|x|),
\end{multline}
which proves that $A^R_\cC$ is well-defined.

Observe now that the integrand $a^R$ in the definition of $A^R_\cC$ is integrable (as we have just seen) and differentiable in $x$.
Also, for each $y \in \cC$ and $b:=\max\{a_0,R\}$ it holds that
\begin{align}
\abs { \nabla_x a^R(x,y) } &= \abs { \frac{y}{|y|^3} - \frac{R e^3 +y}{|R e^3 +y|^3}     } \\
&\leq  \frac{2}{|y|^{2}} \chi_{\cC \cap \set {y_3 \leq b }}(y) +        \biggl| \int \limits_0^1 \frac{\dx{}}{\dx{s}} \bra { \frac{s R e^3 +y}{|s R e^3 +y|^3}    } \dx{s}     \biggr|\chi_{\cC \cap \set {y_3 \geq b }}(y)  \label{eq:A3} \\
 &\leq C \bra { \frac1{|y|^{2}}  \chi_{\cC \cap \set {y_3 \leq b} }(y) + \frac{R}{|y|^3} \chi_{\cC \cap \set {y_3 \geq b}}(y)      }.
\end{align}
Since the right-hand side is  integrable in $\R^3$ (again, thanks to Proposition~\ref{prop:C_is_contained_in_paraboloid}), it follows from dominated convergence that
\begin{align}
\label{eq:DAR}
\nabla A^R_\cC(x) =\alpha_3\int \limits_\cC \nabla_x a^R(x,y) \dx{y}= \alpha_3 \int \limits_\cC \bra { - \frac{y}{|y|^3} + \frac{R e^3 +y}{|R e^3 +y|^3}      } \dx{y},
\end{align}
which is constant in $\R^3$. This proves that  $A^R_\cC$ is an affine function.
\\
\textbf{Step 2.} \emph{Proof of \eqref{eq:VA R}.} 
Recall the definition of $G(x,y)$ in Definition~\ref{NP},
which implies that
\begin{align}
\abs { \bV_\cC(x) -A^R_\cC(x)    } & 
\leq  \alpha_3 \int_{\mathcal C\cap \{0\leq y_3 \leq 2R\}}|G(x,y)-a^R(x,y)|\dx{y}\\
&+ \alpha_3 \int_{\mathcal C\cap \{y_3 \geq 2R\}}|G(x,y)-a^R(x,y)|\dx{y}=:J_1(x)+J_2(x).
\end{align}
Recalling \eqref{eq:Taylor G} and \eqref{eq:Taylor ell}, for $x \in B_R$ with $R\geq a_0$ we can estimate
\begin{align}
J_2(x)\leq CR^2\int_{2R}^\infty \cH^2(\{|y'|^2<\gamma_0 t\}) \frac{1}{t^3}\dx{t}\leq CR
\label{eq:bound J2 R}
\end{align}
(cp. \eqref{eq:I2 ell}). For $J_1(x)$, thanks to Proposition~\ref{prop:C_is_contained_in_paraboloid} it follows that, for $R$ sufficiently large,
$$
\mathcal C\cap \{y_3 \leq 2R\}\subset \left\{|y'|^2<{2\gamma_0 R}\right\}\times \{0\leq y_3\leq 2R\}.
$$
Thus, since
$$
G(Rx,Ry)-a^R(Rx,Ry)=\frac{1}{R}[G(x,y)-a^1(x,y)],
$$
if we write $x=Rz$ with $z \in B_1$ and perform the change of variables $y\mapsto Ry$, we see that
\begin{align}
\int_{\mathcal C\cap \{y_3 \leq 2R\}}|G(x,y)-a^R(x,y)|\dx{y}&\leq \int_{\left\{|y'|^2<{2\gamma_0 R}\right\}\cap \{0\leq y_3 \leq 2R\}}|G(x,y)-a^R(x,y)|\dx{y}\\
&=R^2\int_{\left\{|y'|^2<\frac{2\gamma_0}{R}\right\}\cap \{0\leq y_3 \leq 2\}}|G(z,y)-a^1(z,y)|\dx{y}
\end{align}
(cp. \eqref{eq:x to z}).
Altogether
we proved that 
\begin{multline}
\fint \limits_{B_R}  \abs { \bV_\cC(x) -A^R_\cC(x)    } \dx{x} \leq \fint \limits_{B_R} \big(J_1(x)+J_2(x)\big)\dx{x}\leq CR+  \fint \limits_{B_R} J_1(x)\dx{x}\\
\leq CR+ \alpha_3 R^2 \fint \limits_{B_1} \int_{\left\{|y'|^2<\frac{2\gamma_0}{R}\right\}\cap \{0\leq y_3 \leq 2\}}|G(z,y)-a^1(z,y)|\dx{y}\dx{z}.\label{eq:J12 z}
\end{multline}
Concerning the last integral we observe that, on the domain of integration, $|e^3+y| \geq 1$ and $|z+e^3|\leq 3$. Hence, we can estimate
\begin{equation}
\label{eq:G ell}
|G(z,y)-a^1(z,y)| =\left| \frac{1}{|z-y|}-\frac{1}{|e^3+y|}-\frac{(e^3+y)\cdot(z+e^3)}{|e^3+y|^3}\right|\leq \frac{1}{|z-y|}+4.
\end{equation}
Thus, since $\left|\left\{|y'|^2<\frac{2\gamma_0}{R}\right\}\cap \{0\leq y_3 \leq 2\}\right| \leq \frac{C}R$, it follows from  Fubini's Theorem and \eqref{eq:int G on B1} that
\begin{align}
\fint \limits_{B_R}  \abs { \bV_\cC(x) -A^R_\cC(x)    } \dx{x} 
& \leq CR+ \alpha_3  R^2 \int_{\left\{|y'|^2<\frac{2\gamma_0}{R}\right\}\cap \{0\leq y_3 \leq 2\}}\fint \limits_{B_1} \left(\frac{1}{|z-y|}+4\right)\dx{z} \dx{y}\\
& \leq CR+ CR^2 \left|\left\{|y'|^2<\frac{2\gamma_0}{R}\right\}\cap \{0\leq y_3 \leq 2\}\right|\leq CR, \label{eq:J12 final}
\end{align}
as desired.
\end{proof}

For later purposes, we will also need the next result.

\begin{lem}[Growth of $A^R_\cC$] \label{lem:DA}
Let $N=3$, let $u$ be an $x_N$-monotone solution in the sense of Definition \ref{def:solution}, and let $A^R_\cC$ be as in Lemma~\ref{lem:improvement_of_growth_by_subtracting_affine_linear_function}.
Then there exists a constant $C$ such that
\begin{align}
\label{eq:DA}
|\nabla' A^R_\cC |\leq C,\quad |\partial_3 A^R_\cC |\leq C\log R,\quad |A_\cC^R(0)|\leq CR \qquad \text{ for all $R$ sufficiently large}.
\end{align}
\end{lem}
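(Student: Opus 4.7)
The starting point is the explicit integral representations provided by Lemma~\ref{lem:improvement_of_growth_by_subtracting_affine_linear_function}, namely $A^R_\cC(0)=\alpha_3\int_\cC a^R(0,y)\dx{y}$ and, by \eqref{eq:DAR}, the constant vector $\nabla A^R_\cC = \alpha_3\int_\cC \bigl(\tfrac{Re^3+y}{|Re^3+y|^3}-\tfrac{y}{|y|^3}\bigr)\dx{y}$. The plan is to estimate all three quantities directly from these formulas, using Proposition~\ref{prop:C_is_contained_in_paraboloid} to replace $\cC$ by the paraboloidal region $\{|y'|^2<\gamma_0 y_3\}\cap\{y_3\geq a_0\}$ plus a bounded piece coming from $\cC\cap\{y_3\leq a_0\}$. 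For the delicate cases, the strategy is to split at the threshold $y_3=2R$: on the far region $\{y_3>2R\}$ one extracts a cancellation between the two pieces via a Taylor/mean-value argument; on the near region $\{y_3\leq 2R\}$ each term is bounded individually by an explicit integration in cylindrical coordinates over the paraboloid cross-section.

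For $\nabla' A^R_\cC$ (the tangential components $j=1,2$), no splitting is needed: since $|Re^3+y|\geq|y|$ on $\{y_3\geq 0\}$, the integrand $y_j(|Re^3+y|^{-3}-|y|^{-3})$ is absolutely bounded by $2|y'|/|y|^3$, and the paraboloid containment gives the convergent bound $\int_{a_0}^\infty t^{3/2}/t^3\dx{t}<\infty$, yielding $|\nabla'A^R_\cC|\leq C$. For $\partial_3 A^R_\cC$, the individual pieces $\int\tfrac{y_3}{|y|^3}$ and $\int\tfrac{R+y_3}{|Re^3+y|^3}$ diverge logarithmically, so we must split. On $\{y_3>2R\}$, applying the mean value theorem to $s\mapsto(y_3+sR)((y_3+sR)^2+|y'|^2)^{-3/2}$ yields the pointwise bound $O(R/|y|^3)$, which integrates to $O(1)$ on the paraboloid. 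On $\{y_3\leq 2R\}$ a direct computation gives $\int_{|y'|^2<\gamma_0 t}\tfrac{t}{(t^2+|y'|^2)^{3/2}}\dx{y'}=2\pi\bigl(1-t/\sqrt{t^2+\gamma_0 t}\bigr)\simeq C/t$ for large $t$, hence $\int_{a_0}^{2R}\tfrac{dt}{t}=O(\log R)$; the analogous quantity involving $R+t$ is only $O(1)$ after splitting the $t$-integral at $t=R$. Combining the two regions yields $|\partial_3 A^R_\cC|\leq C\log R$.

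For $A^R_\cC(0)$ the same dichotomy applies. On $\{y_3>2R\}$, setting $h(s):=-|y|^{-1}+|sRe^3+y|^{-1}+sR(sR+y_3)|sRe^3+y|^{-3}$ one checks $h(0)=h'(0)=0$, while for $|y|\geq 2R$ one has $|sRe^3+y|\geq|y|/2$, so the second derivative is bounded by $CR^2/|y|^3$ uniformly in $s\in[0,1]$. The Taylor remainder representation then gives $|a^R(0,y)|=|h(1)|\leq CR^2/|y|^3$, which integrates over the paraboloid to $R^2\cdot O(1/R)=O(R)$. On $\{y_3\leq 2R\}$ each of the three terms is bounded separately: $\int|y|^{-1}\leq C\int_0^{2R}(\sqrt{t^2+\gamma_0 t}-t)\dx{t}\leq CR$; $\int|Re^3+y|^{-1}\leq R^{-1}|\cC\cap\{y_3\leq 2R\}|\leq CR$ since $|\cC\cap\{y_3\leq 2R\}|\leq CR^2$; and $\int R(R+y_3)|Re^3+y|^{-3}\dx{y}\leq CR$ by the same cylindrical computation as above with $t$ split at $R$. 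Collecting, $|A^R_\cC(0)|\leq CR$. The main subtlety throughout is the far-field bookkeeping for $\partial_3 A^R_\cC$ and $A^R_\cC(0)$: the naive term-by-term bounds produce either a logarithmic divergence or a power of $R$ that is too large, so one must cleanly identify the correct Taylor cancellation between the two pieces on $\{y_3>2R\}$.
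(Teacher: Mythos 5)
Your proof is correct and follows essentially the same route as the paper's: use the explicit integral representations of $\nabla A^R_\cC$ and $A^R_\cC(0)$, split at $y_3\approx R$ (or $2R$), extract a Taylor cancellation of order $R^2/|y|^3$ (resp.\ $R/|y|^3$) in the far field, and integrate the near-field terms over the paraboloidal region supplied by Proposition~\ref{prop:C_is_contained_in_paraboloid}. The only differences are cosmetic (you carry out the cylindrical integrals explicitly where the paper uses the coarser pointwise bounds $|a^R(0,y)|\leq 3/|y|$ and \eqref{eq:A3}), and your intermediate computations all check out.
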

\begin{proof}
For the first bound we note that, for $y \in \mathcal C$,
\begin{align}
\abs { \nabla_x' a^R(x,y) } = \abs { \frac{y'}{|y|^3} - \frac{y'}{|R e^3 +y|^3}     } \leq  2 \frac{|y'|}{|y|^3},\label{eq:D'ell}
\end{align}
where $a^R$ is the function defined in \eqref{eq:ell R}.
Thanks to Proposition~\ref{prop:C_is_contained_in_paraboloid},
\begin{align}
\int_\cC\abs { \nabla_{x'} a^R(x,y) } \dx{y}&\leq 2\int_{\cC \cap \set {y_3 \leq a_0} }  \frac1{|y|^{2}}  \dx{y}
+2\int_{\cC \cap \set {{y_3 \geq a_0}} }  \frac{|y'|}{|y|^3} \dx{y}\\
 &\leq C+2\int_{a_0}^\infty \cH^2(\{|y'|^2<\gamma_0 t\}) \frac{(\gamma_0 t)^{1/2}}{t^3}\dx{t}
 \leq C+C\int_{a_0}^\infty \frac{1}{t^{3/2}}\dx{t}\leq C.\label{eq:D'A}
\end{align}
Recalling \eqref{eq:DAR}, this proves that  $|\nabla' A^R_\cC |\leq C$.

For the second bound, we apply \eqref{eq:A3} and Proposition~\ref{prop:C_is_contained_in_paraboloid} to obtain that, for $R \geq a_0$,
\begin{align}
\int_\cC\abs { \partial_{x_3} a^R(x,y) } \dx{y}&\leq C\int_{\cC \cap \set {y_3 \leq a_0} }  \frac{1}{|y|^2}  \dx{y}
+C\int_{\cC \cap \set {{a_0} \leq y_3 \leq R} } \frac{1}{|y|^2}  \dx{y}
 + C\int_{\cC \cap \set {y_3 \geq R} } \frac{R}{|y|^3} \dx{y}\\
 &\leq C+C\int_{a_0}^R \cH^2(\{|y'|^2<\gamma_0 t\}) \frac{1}{t^2}\dx{t}+CR\int_R^\infty \cH^2(\{|y'|^2<\gamma_0 t\}) \frac{1}{t^3}\dx{t} \\
 &\leq C+C\int_{a_0}^R \frac{1}{t}\dx{t}+CR\int_R^\infty \frac{1}{t^2}\dx{t} \leq C\log R.
\end{align}
Thus, $|\partial_3 A^R_\cC |\leq C\log R$.

Finally, in order to estimate $A_\cC^R(0)$, recalling \eqref{eq:ell R} we write
$$
\int_{\cC}|a^R(0,y)|\dx{y}\leq \int_{\mathcal C\cap \{y_3 \leq 2R\}}|a^R(0,y)|\dx{y}+\int_{\mathcal C\cap \{y_3 \geq 2R\}}|a^R(0,y)|\dx{y}=:J_1+J_2.
$$
Using \eqref{eq:I2 ell}, we immediately get $J_2\leq CR$.
Concerning $J_1$, for $y \in \mathcal C\cap \{y_3 \leq 2R\}$ we can estimate 
\begin{align}
\label{eq:ell R J1}
|a^R(0,y)|\leq \frac{2}{|y|}+\frac{R}{R^2+|y|^2}\leq \frac{3}{|y|},
\end{align}
hence
\begin{align}
J_1 & \leq 3\int_{\cC \cap \set {y_3 \leq a_0} }  \frac{1}{|y|}  \dx{y}
+C\int_{\cC \cap \set {a_0 \leq y_3 \leq 2R} } \frac{1}{|y|}  \dx{y}\\
 & \leq C+3\int_{a_0}^R \cH^2(\{|y'|^2<\gamma_0 t\}) \frac{1}{t}\dx{t} \leq C+C\int_{a_0}^R \dx{t}\leq CR,
 \label{eq:ell R J1 2}
\end{align}
concluding the proof.

\end{proof}

\section{Constructing matching paraboloid solutions} \label{sec:constructing_the_correct_ellipsoids}

In this section we construct matching paraboloid solutions (i.e. solutions that have paraboloids as coincidence sets).

More precisely, given $N \geq 3$, we begin by constructing a one-parameter family of paraboloid solutions with the same asymptotics of second order at infinity as the solution $u$. For $N\geq 4$ we find one fixed paraboloid solution $u_P$ such that $u-u_P$ grows sublinearly at infinity.
The critical dimension $N=3$, however, requires a more subtle approach:
here we construct first 
for each $R$ sufficiently large a paraboloid solution $u_{P_R}$ such that $\sup_{B_R}|u-u_{P_R}|\le CR$ where the constant $C$ does not depend on $R$.

\begin{lem}[Existence of paraboloid solutions with prescribed asymptotic behavior at infinity] 
\label{lem:matching_the_quadratic_part}
Given $N\geq 3$, let 
$p = p(x')$ be a homogeneous quadratic polynomial as in Definition~\ref{def:solution}\eqref{PDE_asymptotics}.
Then there exists a (unique) ellipsoid 
$$
E' := \biggl\{ y' \in \R^{N-1} : \frac{y_1^2}{a_1^2} + \ldots+\frac{y_{N-1}^2}{a_{N-1}^2} \leq 1   \biggr\}\subset \R^{N-1}\quad\text{ with $a_i >0$ for $i=1,\ldots,N-1$,}
$$
such that the following holds.

Define the paraboloid $P=P_{E'}:=\{(y',y_N) \in \R^{N-1}\times [0,\infty): y' \in \sqrt{y_N} E'   \}$.
Then there exists a global solution $u_P $ with $P$ as coincidence set and $p$ as quadratic blow-down limit, i.e. 
\begin{align}
\Delta u_{P} = \chi_{\{u_{P} >0\}} ,\quad  u_{P} \geq 0 \qquad \text{ in } \R^N, \quad \{u_{P } = 0\} = P \text{ and}\\
 \frac{u_{P}(rx)}{r^2} \to p(x') \quad \text{ in } C_{\loc}^{1,\alpha}(\R^N) \text{ as } r \to \infty,\quad \alpha\in(0,1).
\end{align} 
Also, for each $\gamma>0$, the function $u_{\gamma P}(x):={\gamma^2 u_P\big(\frac{x}{\gamma}\big)}$ is a global solution with $\gamma P$ as coincidence set and $p$ as blow-down limit, and it satisfies the potential expansion
\begin{align} \label{eq:potential_expansion_of_quadratic_matching-rescaled}
u_{{\gamma P}}  = p + \bV_{{\gamma P}} \quad \text{ in } \R^N.
\end{align}
\end{lem}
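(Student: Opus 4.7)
My plan is to construct the paraboloid solution $u_P$ as a $C^{1,\alpha}_{\loc}$-limit of obstacle solutions with bounded ellipsoidal coincidence sets, to determine $E'$ by matching the blow-down, and finally to derive the potential expansion \eqref{eq:potential_expansion_of_quadratic_matching-rescaled} by Liouville's theorem, exactly as in Proposition~\ref{prop:Newton_potential_expansion}.

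For the existence and matching step, given any axis-aligned ellipsoid $E' \subset \R^{N-1}$ with semi-axes $(a_1,\ldots,a_{N-1})$, I would consider the bounded ellipsoids
\[
E_c := \biggl\{x \in \R^N : \sum_{i=1}^{N-1}\frac{x_i^2}{c\,a_i^2} + \frac{(x_N - c)^2}{c^2} \leq 1\biggr\}, \qquad c \to \infty,
\]
which all have the origin on their boundary and converge locally in Hausdorff sense to $P_{E'}$. The classical theory of compact coincidence sets (Dive, DiBenedetto--Friedman, Friedman--Sakai, \cite{ellipsoid}) supplies a unique obstacle solution $u_{E_c}$ for each $c$, automatically satisfying $u_{E_c}(0) = |\nabla u_{E_c}(0)| = 0$. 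The $C^{1,1}$-bound and $C^{1,\alpha}_{\loc}$-compactness of Lemma~\ref{lem:compact C1}, combined with a.e.\ convergence of coincidence-set indicators, yield a subsequential limit $u_P$ with $\{u_P = 0\} = P_{E'}$. By Lemma~\ref{lem:C11} and Remark~\ref{rem:global liouville}, the blow-down of $u_P$ is a quadratic polynomial: the rescaled sets $r^{-1} P_{E'}$ shrink to the half-line $\{t\,e^N : t \geq 0\}$, which must lie in the blow-down contact set, so convexity together with positive semi-definiteness of the Hessian forces the polynomial to depend only on $x'$, giving $p_{E'}(x') = \tfrac{1}{2}\,x'^T Q_{E'} x'$ with $\tr(Q_{E'}) = 1$. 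Noting that $E' \mapsto \sqrt{\gamma}\,E'$ produces the paraboloid $\gamma P_{E'}$ and preserves the blow-down (by $2$-homogeneity of $p$ and Lemma~\ref{lem:scaling_of_generalized_potential}), modulo this scaling the $(N-2)$-parameter families of axis-aligned ellipsoids and of admissible matrices $Q$ match up, and surjectivity of $E' \mapsto p_{E'}$ follows either from an explicit ellipsoidal-coordinate formula (the paraboloid analog of Newton--Ivory) or from a continuity/degree argument.

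For the potential expansion, I would argue exactly as in the proof of Proposition~\ref{prop:Newton_potential_expansion}. The scaled function $u_{\gamma P}(x) := \gamma^2 u_P(x/\gamma)$ is a global obstacle solution with coincidence set $\gamma P$ and the same blow-down $p$. The generalized Newtonian potential $V_{\gamma P}$ is well-defined, $W^{2,p}_{\loc}$, subquadratically growing, and satisfies $V_{\gamma P}(0) = |\nabla V_{\gamma P}(0)| = 0$ by Lemma~\ref{lem:existence_of_newton_potential} (since $\gamma P \subset T_\delta$ for every small $\delta$). Setting $w := u_{\gamma P} - p - V_{\gamma P}$, the identities $\Delta u_{\gamma P} = \chi_{(\gamma P)^c}$, $\Delta p = 1$, and $\Delta V_{\gamma P} = -\chi_{\gamma P}$ give $\Delta w \equiv 0$, while the subquadratic growth of $V_{\gamma P}$ combined with the cancellation of the quadratic parts yields subquadratic growth of $w$. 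Liouville then forces $w$ to be affine, and the normalizations $u_{\gamma P}(0) = |\nabla u_{\gamma P}(0)| = 0$ (from $0 \in \partial(\gamma P)$ and the free-boundary condition), together with the vanishing of $p$ and $V_{\gamma P}$ and their gradients at the origin, force $w \equiv 0$, proving \eqref{eq:potential_expansion_of_quadratic_matching-rescaled}.

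The step I expect to be the main obstacle is the surjectivity and uniqueness in the matching: although the dimension count is consistent, rigorously establishing that every admissible $p$ is realized by exactly one $E'$ (up to the $\sqrt{\gamma}$-scaling) requires either an explicit ellipsoidal-coordinate formula for the generalized potential of a paraboloid, or a properness-and-degree argument for the continuous map $E' \mapsto p_{E'}$.
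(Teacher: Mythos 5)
Your construction of $u_P$ as a $C^{1,\alpha}_{\loc}$-limit of obstacle solutions with ellipsoidal coincidence sets $E_c$ (flattening towards the paraboloid as $c\to\infty$) is indeed the skeleton of the paper's argument, and your derivation of the potential expansion via Liouville is exactly how the paper handles \eqref{eq:potential_expansion_of_quadratic_matching-rescaled} (citing Proposition~\ref{prop:Newton_potential_expansion} and Lemma~\ref{lem:scaling_of_generalized_potential}). The gap you identified at the end, however, is precisely the step that requires a real idea and is not filled by your proposal: showing that the map $E'\mapsto p_{E'}$ hits every admissible $p$, with uniqueness. You leave this to an unspecified ``continuity/degree argument'' or an ``explicit ellipsoidal-coordinate formula,'' neither of which you work out, and without one of these the lemma is not proved.

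The paper avoids the surjectivity/degree problem entirely by \emph{dimension reduction}. Instead of constructing paraboloid solutions for arbitrary $E'$ and then trying to invert the assignment $E'\mapsto p_{E'}$ in $\R^N$, it first solves the matching problem in $\R^{N-1}$: by DiBenedetto--Friedman (their Equation (5.4)), for each positive definite $p(x')=\tfrac12 x'^TQx'$ with $\tr Q = 1$ there is a \emph{unique} $(N-1)$-dimensional ellipsoid $E'$ such that the classical Newtonian potential satisfies $V^{NP}_{E'}(x')=V^{NP}_{E'}(0)-p(x')$ on $E'$, i.e. $u'_{E'}:=p-V^{NP}_{E'}(0)+V^{NP}_{E'}$ is the $(N-1)$-dimensional obstacle solution with coincidence set $E'$ and blow-down $p$. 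With this $E'$ in hand, the paper builds the $N$-dimensional flattening ellipsoids, passes to the limit to get $u^\star$ with $\{u^\star=0\}=P_{E'}$, then rescales parabolically around $(0,k)$ with scale $\sqrt{k}$ to see the cylindrical limit $u'_{E'}(x')$, and finally invokes Lemma~B.2 of \cite{esw_arXiv} (that the quadratic blow-down is a translation-and-rescaling invariant) to conclude that $u^\star$ itself has blow-down $p$. Uniqueness of $E'$ is inherited from the $(N-1)$-dimensional statement. So the missing ingredient in your proposal is exactly this reduction to the compact $(N-1)$-dimensional case, which replaces your degree argument by an already-known explicit formula; also, as a minor point, your $E_c$ converge to $P_{\sqrt{2}E'}$ rather than $P_{E'}$ (a harmless missing factor of $2$ in the $x'$-denominators).
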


\begin{proof}We split the proof into three steps.\\
\textbf{Step 1.} \emph{Construction of a suitable sequence of ellipsoids.} \\
Let $K_d$ denote the fundamental solution of the Laplace operator in $\R^{d}$, namely,
\begin{equation}
\label{eq:Green}
K_d(z):=\left\{
\begin{array}{ll}
{-}\frac{1}{2\pi}\log|z|&\text{if }d=2,\\
\frac{1}{d(d-2)|B_1|}\frac{1}{|z|^{d-2}}&\text{if }d\geq 3.
\end{array}
\right.
\end{equation}
Given $p$ as in the statement, it follows from  \cite[Equation (5.4)]{DiBenedettoFriedman} that there exists a unique ellipsoid  $E' := \left\{ y' \in \R^{N-1} : \frac{y_1^2}{a_1^2} + \ldots+\frac{y_{N-1}^2}{a_{N-1}^2} \leq 1   \right\}\subset \R^{N-1}$ , with $a_i>0$, such that
\begin{align}
	V^{NP}_{E'}(x') = V^{NP}_{E'}(0) -p(x') \qquad \text{ for all }  x' \in E',
\end{align} 
where $V^{NP}_{E'}$ denotes the classical $(N-1)$-dimensional Newtonian potential of $E'$, i.e.
\begin{align}
	V^{NP}_{E'}(x') := \int \limits_{E'} K_{N-1}(x'-y') \dx{y'} \qquad \text{ for all }  x' \in \R^{N-1}.
\end{align}
Set
\begin{align} \label{eq:def_of_u_E'}
	u'_{E'}(x') := p(x') - V^{NP}_{E'}(0) +V^{NP}_{E'}(x')   \qquad \text{ for all }  x' \in \R^{N-1}.
\end{align}
Then, it follows from \cite[Theorem II]{CaffarelliKarpShahgolian_Annals_2000} that $u'_{E'}$ is a nonnegative global solution to the obstacle problem in $\R^{N-1}$
satisfying $\{ u_{E'}' =0\} = E'$.

We now complete $E'$ to an $N$-dimensional ellipsoid approximating a paraboloid in the following way: for each $n \in \N$, set
\begin{align}
\tilde{E}^n := \biggl\{  x \in \R^N: x' \in \sqrt{\frac{n}2- \frac{x_N^2}{2n}} E'\biggr\} = \biggl\{ x \in \R^N : \sum \limits_{j=1}^{N-1} \frac{2x_j^2}{a_j^2 n} + \frac{x_N^2}{n^2} \leq 1    \biggr\}.
\end{align}
From \cite[Equation (5.3)]{DiBenedettoFriedman} we infer that, for each $n \in \N$, there is a homogeneous quadratic polynomial $q^n$ such that $\Delta q^n =1$ and 
\begin{align} \label{eq:NP_of_tilde_E_n}
	V^{NP}_{\tilde{E}^n}(x) = V_{\tilde{E}^n}^{NP}(0) - q^n(x) \qquad \text{ for all }  x \in \tilde{E}^n,
\end{align}
where 
\begin{align}
	V^{NP}_{\tilde{E}^n}(x) := \int \limits_{\tilde{E}^n} K_{N}(|x-y|) \dx{y} \qquad \text{ for all }  x \in \R^{N}.
\end{align}
Let us now translate the ellipsoids $\tilde{E}^n$ such that they all touch the origin:
\begin{align} \label{eq:def_of_E_n}
	E^n := \tilde{E}^n + n e^N &=\biggl\{ x \in \R^N : \sum \limits_{j=1}^{N-1} \frac{2x_j^2}{a_j^2 n} + \frac{(x_N-n)^2}{n^2} \leq 1    \biggr\}\\
	&=\biggl\{ x \in \R^N : \sum \limits_{j=1}^{N-1} \frac{2x_j^2}{a_j^2} + \frac{x_N^2}{n} \leq 2x_N   \biggr\}.
\end{align}
Then, recalling \eqref{eq:NP_of_tilde_E_n}, for all $x \in E^n$ we have
\begin{align}
V^{NP}_{E^n}(x)&= V^{NP}_{\tilde{E}^n + n e^N}(x) = V_{\tilde{E}^n}^{NP}(x-n e^N)\\
  &= V^{NP}_{\tilde{E}^n}({0}) - q^n(x-n e^N) = V^{NP}_{E^n}({ne^N}) - q^n(x-n e^N).
\end{align}
\textbf{Step 2.} \emph{Switching to the obstacle problem and passing to the limit.} \\
Let us now set, for all $n \in \N$,
\begin{align}
	u_{E^n}(x) := q^n(x-n e^N) - V^{NP}_{E^n}({ne^N}) + V^{NP}_{E^n}(x)  \qquad \text{ for all }  x\in \R^N.
\end{align}
As before,  \cite[Theorem II]{CaffarelliKarpShahgolian_Annals_2000} guaranteess that $u_{E^n}$ is a non-negative global solution to the obstacle problem satisfying $\{u_{E^n} =0\} = E^n$. \\
Since $0 \in E^n = \{u_{E^n} =0\}$ we deduce from Lemma~\ref{lem:compact C1}\eqref{item:compact} that, 
passing if necessary to a subsequence,
\begin{align}
u_{E^n} \to {u^\star} \quad \text{ in } C^{1,\alpha}_{\loc}(\R^N)\quad \text{and}\quad
\chi_{\{u_{E^n}=0\}} \to \chi_{\{{u^\star} =0\}}\quad\text{a.e.}\qquad \text{ as } n \to \infty.
\end{align}
On the other hand, by construction (cf. \eqref{eq:def_of_E_n}),
\begin{align}
\chi_{E^n} \to \chi_{P} \quad \text{ a.e. in } \R^N \text{ as } n \to \infty , \qquad \text{ where } P:= \biggl\{\sum \limits_{j=1}^{N-1} \frac{x_j^2}{a_j^2} \leq  x_N   \biggr\}=\left\{ x' \in \sqrt{x_N} E'   \right\},
\end{align}
and therefore $\{{u^\star} =0\} =P$. 
\\
\textbf{Step 3.} \emph{Identification of the blow-down limit of $u^\star$ and conclusion.}
\\
Let us define the following sequence of rescalings $(u^\star_k)_{k \in \N}$:
\begin{align}
	u^\star_k(x) := \frac{u^\star(r_k x + x^k)}{r_k^2}  \quad \text{ for }  x \in \R^N,\qquad \text{where }x^k  := {(0,k)}\text{ and }r_k := \sqrt{k}.
\end{align}
Since $0 \in \{ u^\star_k =0\}$, using Lemma~\ref{lem:compact C1}\eqref{item:compact} once more we deduce that, passing if necessary to a subsequence,
\begin{align}
u^\star_k \to u^\star_0 \quad \text{ in } C_{\loc}^{1,\alpha}(\R^N)  \quad  \text{ as } k \to \infty,
\end{align}
where $u^\star_0$ is a non-negative global solution to the obstacle problem.
Also, arguing as in Step 2, we see that the coincidence sets of $u^\star_k$ converge to $E' \times \R$, hence $\{ u^\star_0 =0\} = E' \times \R$. 
This implies that $\{u^\star_0=0\}$ contains the ray $\{t e^N : t \in \R\}$,
so it follows from Lemma~\ref{lem:infinite line} that $u^\star_0$ is independent of $x_N$, i.e.
\begin{align} \label{eq:u_0_is_clyindric}
	u^\star_0(x) = u^\star_0(x',0) =: (u^\star_0)'(x')   \qquad \text{ for all }  x \in \R^N.
\end{align}
Since $u_{E'}'$ constructed in Step 1 (cf. \eqref{eq:def_of_u_E'}) is the unique global solution to the obstacle problem in $\R^{N-1}$ with $E'$ as coincidence set, we deduce that $(u^\star_0)' \equiv u_{E'}'$. Also, since the classical Newton-potential $V_{E'}^{NP}(x')$ has subquadratic growth as $|x'| \to \infty$, we have that $p(x')$ is the blow-down limit of $u'_{E'}$. Thus, recalling \eqref{eq:u_0_is_clyindric}, 
\begin{align}
\frac{u^\star_0(\rho x)}{\rho^2}=\frac{u'_{E'}(\rho x')}{\rho^2} \to p(x') \quad \text{ in } C^{1,\alpha}_{\rm loc}(\R^{N-1}) \text{ as } \rho \to \infty
\end{align} 
On the other hand, since in the limit as $\rho \to \infty$ the contact set of $\frac{{u^\star} (\rho x)}{\rho^2}$ has measure zero (as a consequence of  Lemma~\ref{lem:compact C1}\eqref{item:compact}), the blow-down limit of $u^\star$ is a homogeneous quadratic polynomial $\tilde{p}$ as well, i.e.
\begin{align}
	\frac{{u^\star} (\rho x)}{\rho^2} \to \tilde{p}(x) \quad \text{ in } C^{1,\alpha}_{\rm loc}(\R^N) \text{ as } \rho \to \infty.
\end{align}
Hence, we are in the position to apply \cite[Lemma B.2]{esw_arXiv} to $u^\star$ and the sequence of rescalings $(u^\star_k)_{k\in \N}$ to deduce that $\tilde p=p$, which proves that $p$ is the blow-down limit of ${u^\star} $. 
In conclusion, ${u^\star}$ is the desired paraboloid solution.

Finally, the remaining statements follow from Proposition \ref{prop:Newton_potential_expansion} and Lemma~\ref{lem:scaling_of_generalized_potential}.
\end{proof}

In the next result we show that, if $N\geq 4$, we can actually find paraboloid solutions with prescrived behavior up to linear order. Given $a \in \R^N$, use the notation $u_{\gamma P -a}$ to denote the solution that has $\gamma P-a$ as coincidence set. Note that this solution is obtained simply by translating the solution having $\gamma P$ as coincidence set, namely $u_{\gamma P -a}(x)=u_{\gamma P }(x+a)$.

\begin{lem}[Existence of paraboloid solutions with prescribed linear behavior at infinity in dimension $N\ge 4$]  
\label{lem:matching_the_quadratic_and_linear_part}
Let $N \geq 4$, let
$p = p(x')$ be a homogeneous quadratic polynomial as in Definition~\ref{def:solution}\eqref{PDE_asymptotics},
and let $P$ be as in Lemma~\ref{lem:matching_the_quadratic_part}.
For any ${b}=(b',b_N)\in \R^{N-1}\times (-\infty,0)$
there exist $\tau' \in \R^{N-1}$ and $\gamma>0$ such that the following holds:
for each $\sigma \in \R$,
\begin{align} \label{eq:potential_expansion_of_quadratic+linear_matching-rescaled}
\frac{1}{R}\fint \limits_{B_R} \left| u_{\gamma P -(\tau',\sigma)}(x)- p(x') - b\cdot x\right|\dx{x} \to 0\qquad \text{as }R\to \infty.
\end{align}
\end{lem}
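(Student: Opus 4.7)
The plan is to write $u_{\gamma P-(\tau',\sigma)}$ explicitly via the Newtonian expansion and to select $\tau'$ and $\gamma$ uniquely so as to match the prescribed linear profile $b\cdot x$, leaving a constant plus a sublinear remainder.

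First I would use the translation identity $u_{\gamma P-(\tau',\sigma)}(x)=u_{\gamma P}(x+(\tau',\sigma))$, apply Lemma~\ref{lem:matching_the_quadratic_part} to write $u_{\gamma P}=p+V_{\gamma P}$, and then apply Lemma~\ref{lem:improvement_of_growth_N4} to decompose $V_{\gamma P}=W_{\gamma P}-\ell_{\gamma P}$ with $\ell_{\gamma P}(x)=L_\gamma\cdot x$ linear and $\fint_{B_R}|W_{\gamma P}|\,\dx{x}\leq CR^{1/2}$. (Strictly speaking Lemma~\ref{lem:improvement_of_growth_N4} is phrased for coincidence sets of $x_N$-monotone solutions, but its proof uses only the tube inclusion $\cC\setminus B_{\hat r}\subset T_\delta$, which holds trivially for the paraboloid $\gamma P$.) Expanding $p(x'+\tau')=p(x')+(Q\tau')\cdot x'+p(\tau')$ then yields
\begin{equation}
\begin{split}
u_{\gamma P-(\tau',\sigma)}(x)-p(x')-b\cdot x &= (Q\tau'-b')\cdot x'-\bigl((L_\gamma)_N+b_N\bigr)x_N \\
&\quad +\bigl[p(\tau')-(L_\gamma)_N\sigma\bigr]+W_{\gamma P}(x+(\tau',\sigma)).
\end{split}
\end{equation}

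Next I would analyse $L_\gamma$. By symmetry of $E'$ (and hence of $\gamma P$) under the reflections $y_i\mapsto -y_i$ for $i<N$, the integrand $y_i/|y|^N$ defining $(L_\gamma)_i$ is odd, so $L_\gamma=(L_\gamma)_N\,e^N$. The substitution $y=\gamma z$ gives the scaling $L_\gamma=\gamma L_1$, and $(L_1)_N=\alpha_N(N-2)\int_P z_N/|z|^N\,\dx{z}$ is a strictly positive finite constant (integrable precisely because $N\geq 4$). Since $Q$ is positive definite and $b_N<0$, I would then set $\tau':=Q^{-1}b'$ and $\gamma:=-b_N/(L_1)_N>0$, which annihilates both the $x'$- and the $x_N$-coefficients on the right-hand side above.

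After this choice, the only surviving terms are an $x$-independent constant, contributing $O(1/R)$ under $\frac{1}{R}\fint_{B_R}|\cdot|\,\dx{x}$, and the translated function $W_{\gamma P}(\,\cdot\,+(\tau',\sigma))$, which by the trivial inclusion $B_R-(\tau',\sigma)\subset B_{R+|(\tau',\sigma)|}$ and Lemma~\ref{lem:improvement_of_growth_N4} contributes at most $C_\sigma R^{-1/2}$. Sending $R\to\infty$ yields the claim. I do not foresee a genuine obstacle here; the only subtleties are confirming that the decomposition of Lemma~\ref{lem:improvement_of_growth_N4} extends to $\gamma P$ and that $(L_1)_N$ is positive and finite, both of which are transparent from the proof of that lemma.
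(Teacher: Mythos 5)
Your proof is correct and follows essentially the same route as the paper's: decompose $V_{\gamma P}=W_{\gamma P}-\ell_{\gamma P}$ via Lemma~\ref{lem:improvement_of_growth_N4}, use the symmetry of $E'$ to see that $\ell_{\gamma P}$ is proportional to $e^N$, set $\tau'=Q^{-1}b'$ to cancel the tangential part, tune $\gamma$ to cancel the $x_N$-coefficient, and bound the translated remainder $W_{\gamma P}(\cdot+(\tau',\sigma))$ using the $R^{1/2}$ estimate. The one genuine (if small) difference is in how $\gamma$ is selected: the paper shows $\gamma\mapsto\lambda_\gamma:=\alpha_N(N-2)\int_{\gamma P}y_N/|y|^N\,\dx{y}$ is continuous by monotone convergence, computes the limits $\lambda_\gamma\to 0$ and $\lambda_\gamma\to+\infty$, and invokes the intermediate value theorem, whereas you notice the exact homogeneity $\lambda_\gamma=\gamma\lambda_1$ from the substitution $y=\gamma z$, which yields $\gamma=-b_N/\lambda_1$ explicitly and shows uniqueness of the choice. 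That is a clean streamlining and is worth keeping. One minor remark: your parenthetical caveat that Lemma~\ref{lem:improvement_of_growth_N4} is ``phrased for coincidence sets of $x_N$-monotone solutions'' is unnecessary, since $u_{\gamma P}$ itself satisfies Definition~\ref{def:solution} (the paper calls it ``trivially an $x_N$-monotone solution'' in the proof of Lemma~\ref{lem:matching_D3A}), so the lemma applies as stated rather than by inspecting its proof.
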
	 
\begin{proof}
As noted before, $u_{\gamma P -(\tau',\sigma)}(x)=u_{\gamma P}(x'+\tau',x_N+\sigma)$. Hence, since $p$ is a quadratic polynomial,
recalling Lemma~\ref{lem:improvement_of_growth_N4}  we have
\begin{align}
u_{\gamma P -(\tau',\sigma)}(x)&=p(x'+\tau')+V_{\gamma P}(x'+\tau',x_N+\sigma)\\
&=p(x')+\nabla p(x')\cdot \tau'+p(\tau') \label{eq:translate dilate u}\\
&\qquad -(x'+\tau',x_N+\sigma)\cdot \biggl(\alpha_N(N-2)\int_{\gamma P}\frac{y}{ \abs{y}^N}\dx{y}\biggr)+W_{\gamma P}(x'+\tau',x_N+\sigma).
\end{align}
Note now that, by symmetry,
$$
\alpha_N(N-2)\int_{\gamma P}\frac{y}{ \abs{y}^N}\dx{y}=\lambda_\gamma e^N,\qquad
\text{where }\lambda_\gamma:=\alpha_N(N-2)\int_{\gamma P}\frac{y_N}{ \abs{y}^N}\dx{y}>0.
$$
Thus
\begin{align}
u_{\gamma P -(\tau',\sigma)}(x)=p(x')+\nabla p(x')\cdot \tau'+p(\tau')-\lambda_\gamma(x_N+\sigma)+W_{\gamma P}(x'+\tau',x_N+\sigma).
\end{align}
Recalling that $\nabla p(x')=Qx'$ with $Q$ symmetric and invertible, we choose $\tau':= Q^{-1}b'$ to ensure that $\nabla p(x')\cdot \tau'=b'\cdot x'$.

On the other hand, it follows by monotone convergence that $\gamma\mapsto \lambda_\gamma$ is continuous and that
\begin{align}
& \lambda_\gamma \to \alpha_N(N-2)\int_{\R^{N-1}\times (0,\infty)}\frac{y_N}{ \abs{y}^N}\dx{y}=+\infty\qquad \text{as }\gamma \to \infty,\\
& \lambda_\gamma \to 0\qquad \text{as }\gamma \to 0.
\end{align}
Thus, by continuity, there exists $\gamma>0$ such that $\lambda_{\gamma}=-b_N$.
Hence, with these choices of $\tau'$ and $\gamma$, we get
$$
u_{\gamma P -(\tau',\sigma)}(x)- p(x') - b\cdot x=p(\tau')-\lambda_\gamma \sigma+W_{\gamma P}(x'+\tau',x_N+\sigma).
$$
Recalling Lemma~\ref{lem:improvement_of_growth_N4} this implies that
for sufficiently large $R$ (depending on $\tau'$, $\gamma$, and $\sigma$),
\begin{multline}
\fint \limits_{B_R} \left| u_{\gamma P -(\tau',\sigma)}(x)- p(x') - b\cdot x\right|\dx{x}
\leq |p(\tau')|+\lambda_\gamma |\sigma| +\fint \limits_{B_R} \left| W_{\gamma P}(x'+\tau',x_N+\sigma)\right|\dx{x}\\
\leq |p(\tau')|+\lambda_\gamma |\sigma|+\frac{1}{|B_R|}\int_{B_{2R}} |W_{\gamma P}(x)|\dx{x} \leq CR^{1/2},
\end{multline}
and the result follows.
\end{proof}

\begin{cor}
\label{cor:matching4}
Let $N\geq 4$, and let $u$ be an $x_N$-monotone solution in the sense of Definition \ref{def:solution}.
Then there exist a paraboloid 
$P$ as in Lemma~\ref{lem:matching_the_quadratic_part}, $\gamma>0$, and $\tau' \in \R^{N-1}$ such that, for each $\sigma \in \R$,
\begin{align} \label{eq:u uP 4}
\frac{1}{R}\fint \limits_{B_R} \left| u- u_{\gamma P -(\tau',\sigma)}  \right|\dx{x} \to 0 \qquad \text{as }R\to \infty.
\end{align}
\end{cor}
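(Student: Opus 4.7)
The plan is to combine the Newtonian potential expansion of $u$ with the matching paraboloid solution constructed in Lemma~\ref{lem:matching_the_quadratic_and_linear_part}. By Proposition~\ref{prop:Newton_potential_expansion} we have $u = p + V_\cC$, and Lemma~\ref{lem:improvement_of_growth_N4} lets us split this further as
\begin{align}
u(x) = p(x') + b\cdot x + W_\cC(x), \qquad b := -\alpha_N(N-2)\int_\cC \frac{y}{|y|^N}\dx{y},
\end{align}
so that $-\ell_\cC(x) = b\cdot x$. The error $W_\cC$ satisfies $\fint_{B_R}|W_\cC|\dx{x} \leq CR^{1/2}$, hence divided by $R$ it tends to $0$.

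The key point is to verify that this $b$ lies in the admissible set of Lemma~\ref{lem:matching_the_quadratic_and_linear_part}, i.e.\ that $b_N < 0$. Since $\cC\subset\{y_N\geq 0\}$ (Definition~\ref{def:solution}(ii)) and $\cC$ has nonempty interior with $0\in\partial\cC$ (so the interior lies in $\{y_N>0\}$), the integral $\int_\cC \frac{y_N}{|y|^N}\dx{y}$ is strictly positive (it is absolutely convergent by the proof of Lemma~\ref{lem:improvement_of_growth_N4}, and the integrand is nonnegative on $\cC$ and strictly positive on a set of positive measure). Therefore $b_N<0$, and Lemma~\ref{lem:matching_the_quadratic_and_linear_part} supplies $\tau'\in\R^{N-1}$ and $\gamma>0$ (depending on $b$, hence on $u$, but not on $\sigma$) such that, for every $\sigma\in\R$,
\begin{align}
\frac{1}{R}\fint_{B_R}\left|u_{\gamma P-(\tau',\sigma)}(x) - p(x') - b\cdot x\right|\dx{x} \longrightarrow 0 \quad\text{as } R\to\infty.
\end{align}

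Combining the two ingredients by the triangle inequality,
\begin{align}
\frac{1}{R}\fint_{B_R}\left|u - u_{\gamma P-(\tau',\sigma)}\right|\dx{x}
&\leq \frac{1}{R}\fint_{B_R}|W_\cC|\dx{x} + \frac{1}{R}\fint_{B_R}\left|p(x')+b\cdot x - u_{\gamma P-(\tau',\sigma)}\right|\dx{x},
\end{align}
and both terms on the right tend to $0$ as $R\to\infty$, which yields \eqref{eq:u uP 4}.

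There is essentially no obstacle here: all of the work is packaged in Lemma~\ref{lem:improvement_of_growth_N4} (which produces the sublinear remainder $W_\cC$ and identifies the linear part of $V_\cC$) and in Lemma~\ref{lem:matching_the_quadratic_and_linear_part} (which constructs the paraboloid solution realizing any prescribed linear asymptote with negative $N$-th component). The only check to perform is the sign of $b_N$, which follows from the convexity of $\cC$ together with $\cC\subset\{x_N\geq 0\}$ and $\operatorname{int}(\cC)\neq\emptyset$.
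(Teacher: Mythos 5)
Your proposal is correct and follows essentially the same route as the paper: decompose $u = p - \ell_\cC + W_\cC$ via Proposition~\ref{prop:Newton_potential_expansion} and Lemma~\ref{lem:improvement_of_growth_N4}, observe $b_N = -\alpha_N(N-2)\int_\cC y_N/|y|^N\,dy < 0$, apply Lemma~\ref{lem:matching_the_quadratic_and_linear_part}, and conclude by the triangle inequality. The only difference is that you spell out the sign check on $b_N$ a bit more explicitly than the paper does, which is fine.
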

\begin{proof}
Let $p$ be as in Definition~\ref{def:solution}, let $\cC$ be the coincidence set of $u$, and let $\ell_\cC$ be as in Lemma~\ref{lem:improvement_of_growth_N4},
so that $u=p-\ell_\cC+W_\cC$.
Define
$$
b=-\nabla \ell_\cC=-\alpha_N(N-2)\int_{\cC}\frac{y}{ \abs{y}^N}\dx{y}.
$$
Since $b_N=-\alpha_N(N-2)\int_{\cC}\frac{y_N}{ \abs{y}^N}\dx{y}<0$, we can
apply Lemmas~\ref{lem:matching_the_quadratic_and_linear_part} and~\ref{lem:improvement_of_growth_N4} to deduce the existence of $P,$ $\gamma$, and $\tau'$ such that, for each $\sigma \in \R$,
$$
\frac{1}{R}\fint \limits_{B_R} \left| u-u_{\gamma P -(\tau',\sigma)}  \right|\dx{x} \leq  \frac{1}{R}\fint \limits_{B_R} \left| W_\cC (x) \right|\dx{x}+\frac{1}{R}\fint \limits_{B_R} \left| u_{\gamma P -(\tau',\sigma)} -p(x')- b\cdot x \right|\dx{x} 
 \to 0
$$
as $R\to \infty.$
\end{proof}

In the critical dimension
$N=3$, the statement of Lemma \ref{lem:matching_the_quadratic_and_linear_part}
does not hold.
The best we can do in dimension $N=3$ is
to match on each $B_R$ the slope of the affine approximation of $V_{P_R}$ in the $e^3$-direction. Note that $P_R$ depends on $R$ and
the slope diverges as $R\to\infty$.

\begin{lem}[Matching on each ball $B_R$ in dimension $N=3$] 
\label{lem:matching_D3A}
Let $N =3$, 
$p = p(x')$ be the blow-down polynomial defined in Definition~\ref{def:solution}\eqref{PDE_asymptotics},
and $P$ as in Lemma~\ref{lem:matching_the_quadratic_part}.
Also, for $\gamma \geq 0$, let $A_{\gamma P}^R$ be defined as in Lemma~\ref{lem:improvement_of_growth_by_subtracting_affine_linear_function}.

Then, given $B>0$, there exist $\gamma_B>0$ and $R_B\geq 1$ such that
the following holds: For each $\beta \in [0,B]$ and $R\geq R_B$ there exists $\gamma=\gamma(\beta,R) \in [0,\gamma_B]$ such that
\begin{align} \label{eq:linear AP}
\partial_3 A_{\gamma P}^R=-\beta\log R.
\end{align}
\end{lem}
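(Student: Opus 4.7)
The strategy is to apply the intermediate value theorem to the map $\gamma\mapsto \partial_3 A^R_{\gamma P}$ on an interval $[0,\gamma_B]$. From the explicit representation \eqref{eq:DAR},
\begin{align*}
\partial_3 A^R_{\gamma P}= \alpha_3 \int_{\gamma P}\Big[-\tfrac{y_3}{|y|^3}+\tfrac{R+y_3}{|Re^3+y|^3}\Big]\,dy.
\end{align*}
Three elementary observations follow. First, $\partial_3 A^R_{0\cdot P}=0$ since $0\cdot P=\{0\}$. Second, $\gamma\mapsto \partial_3 A^R_{\gamma P}$ is continuous on $[0,\infty)$ by dominated convergence, the $L^1$ majorant being the one already produced in the proof of Lemma~\ref{lem:DA}. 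Third, integrating out the cross-sections $\gamma P\cap\{y_3=t\}=\sqrt{\gamma t}\,E'$ using the radiality of the $z'$-integrand (explicit for a disk $E'$, and similar for a general ellipse via the angular radial function) reduces things, up to positive $E'$-dependent constants, to
\begin{align*}
\partial_3 A^R_{\gamma P} \;=\; -c_0\int_0^\infty\Big[\tfrac{R+t}{\sqrt{(R+t)^2+c\gamma t}}-\tfrac{t}{\sqrt{t^2+c\gamma t}}\Big]dt,
\end{align*}
where the bracket is strictly positive because $s\mapsto s/\sqrt{s^2+c\gamma t}$ is strictly increasing. In particular $\gamma\mapsto \partial_3 A^R_{\gamma P}$ is monotone nonincreasing (enlarging $\gamma$ enlarges the set over which a nonpositive function is integrated).

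The core of the proof is the asymptotic estimate: there exist $c'=c'(E')>0$ and $C=C(E')$ such that for all $\gamma\in[0,1]$ and all sufficiently large $R$,
\begin{align*}
\partial_3 A^R_{\gamma P}\;\le\; -c'\gamma\log R+C\gamma.
\end{align*}
I would prove this by splitting the one-dimensional $t$-integral at $c\gamma$ and at $R$. On $[0,c\gamma]$ the integrand is bounded and the interval has length $c\gamma$, contributing $O(\gamma)$. On $[R,\infty)$, Taylor expansion shows that the two terms of the bracket cancel at leading order and the difference is bounded by a multiple of $\gamma R/(t(R+t))$, whose integral is also $O(\gamma)$. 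On the middle range $[c\gamma,R]$, Taylor expansion yields the leading-order integrand $c\gamma/(2t)$, integrating to $\tfrac{c\gamma}{2}\log(R/(c\gamma))$, which produces the advertised $\gamma\log R$ term.

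With the asymptotic estimate in hand, set $\gamma_B:=2B/c'$ and choose $R_B$ large enough that $C\gamma_B\le B\log R_B$. Then $\partial_3 A^R_{\gamma_B P}\le -B\log R$ for every $R\ge R_B$, while $\partial_3 A^R_{0\cdot P}=0$, so continuity of $\gamma\mapsto\partial_3 A^R_{\gamma P}$ together with the intermediate value theorem (and monotonicity, if desired) yields, for each $\beta\in[0,B]$ and $R\ge R_B$, the desired $\gamma(\beta,R)\in[0,\gamma_B]$ with $\partial_3 A^R_{\gamma P}=-\beta\log R$. The main technical obstacle is the sharp extraction of the logarithm: the two terms defining the integrand are not individually integrable over $\gamma P$ and cancel delicately, so the $\log R$ factor surfaces only after the $z'$-integration has been performed and termwise cancellations in $t$ are carefully tracked.
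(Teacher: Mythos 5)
Your proposal is correct in essence and arrives at the same key estimate as the paper --- a bound of the form $\partial_3 A^R_{\gamma P}\leq -c\gamma\log R$ for large $R$, followed by continuity of $\gamma\mapsto\partial_3 A^R_{\gamma P}$, vanishing at $\gamma=0$, and the intermediate value theorem --- but the route to that estimate is genuinely different. The paper's proof re-expresses $\partial_3 A^R_{\gamma P}$, via an $\epsilon$-regularization of the kernel $|y|^{-3}$ and a change of variables in $y$, as $-\alpha_3\int_{\gamma P\setminus(\gamma P+Re^3)}\frac{y_3}{|y|^3}\dx{y}$; this identity makes the sign transparent, and the logarithm is then extracted by restricting to the slab $\{R^{1/2}<y_3<R\}$, where $|y|$ is comparable to $y_3$ and the cross-sectional measure contributes the factor $\gamma$. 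You instead integrate out the cross-section $\sqrt{\gamma y_3}\,E'$ in the $y'$-variable (exactly for a disk, and after an angular integration against the radial function of $E'$ in general), and then resolve the delicate cancellation in the resulting one-dimensional integral by Taylor expansion on $[0,c\gamma]$, $[c\gamma,R]$, and $[R,\infty)$. The paper's route is shorter and avoids the explicit cross-sectional computation and the bookkeeping of cancellations in $t$; yours is more computational but more elementary --- no regularization trick --- and it incidentally produces the one-dimensional profile and a monotonicity statement that the paper neither uses nor proves.

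One imprecision you should fix: you state your asymptotic estimate only for $\gamma\in[0,1]$, but you then invoke it at $\gamma=\gamma_B=2B/c'$, which exceeds $1$ whenever $B>c'/2$. Your three-range splitting actually gives the same estimate, with the same constants $c'$ and $C$, for all $\gamma$ in any fixed bounded interval $[0,\gamma_{\max}]$ provided $R$ is large enough in terms of $\gamma_{\max}$ (so that the middle range $[c\gamma,R]$ is nontrivial); you should say this and then choose $R_B$ accordingly. This is exactly why the paper takes $R_B:=\max\{\gamma_B^2,1\}$ rather than a threshold independent of $B$.
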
	 
\begin{proof}
Note that, since $u_{\gamma P}$ is trivially an $x_N$-monotone solution, $A_{\gamma P}^R$ is well-defined thanks to Lemma~\ref{lem:improvement_of_growth_by_subtracting_affine_linear_function}.
Also, as shown in Step 1 in the proof of Lemma~\ref{lem:improvement_of_growth_by_subtracting_affine_linear_function},
\begin{equation}\label{eq:p3 A1}
\partial_3 A_{\gamma P}^R = \alpha_3 \int \limits_{\gamma P} \bra { - \frac{y_3}{|y|^3} + \frac{R +y_3}{|R e^3 +y|^3}      } \dx{y}.
\end{equation}
We now observe that
\begin{align}
\int \limits_{\gamma P} \bra { - \frac{y_3}{|y|^3} + \frac{R +y_3}{|R e^3 +y|^3}      } \dx{y}
&=\lim_{\epsilon \to 0}
\int \limits_{\gamma P} \bra { - \frac{y_3}{|y|^{3+\epsilon}} + \frac{R +y_3}{|R e^3 +y|^{3+\epsilon}}      } \dx{y}\\
&=\lim_{\epsilon \to 0}-\int \limits_{\gamma P}  \frac{y_3}{|y|^{3+\epsilon}} \dx{y}+
\int_{\gamma P} \frac{R +y_3}{|R e^3 +y|^{3+\epsilon}}     \dx{y}\\
&=\lim_{\epsilon \to 0}-\int \limits_{\gamma P\setminus (\gamma P+Re^3)}  \frac{y_3}{|y|^{3+\epsilon}} \dx{y}=- \int \limits_{\gamma P\setminus (\gamma P+Re^3)} \frac{y_3}{|y|^3} \dx{y},
\end{align}
where the first equality follows from dominated convergence since the integrand is uniformly convergent at infinity (see \eqref{eq:A3}), the second equality from the fact that each term in the integrand is integrable for $\epsilon>0$, the third equality from a change of variables, and the last equality from monotone convergence in $|y|>1$
and dominated convergence in $|y|\leq 1$.
This proves that
\begin{equation}\label{eq:p3 A2}
\partial_3 A_{\gamma P}^R =-\alpha_3 \int \limits_{\gamma P\setminus (\gamma P+Re^3)} \frac{y_3}{|y|^3} \dx{y}<0,
\end{equation}
In particular, since $\gamma P+Re^3\subset \{y_3 \geq R\}$, for $R>1$ it follows  that
\begin{align}
\label{eq:D3A R}
\partial_3 A_{\gamma P}^R&\leq -\alpha_3 \int_{\gamma P\cap \{R^{1/2}<y_3 < R\}}\frac{y_3}{|y|^3} \dx{y}.
\end{align}
Note now that, for $y' \in \sqrt{\gamma y_3}E'$ with $y_3\geq R^{1/2}$ and ${R^{1/2}}\geq \gamma$, we have
$$
|y|\leq |y'|+y_3 \leq C_{E'}\gamma^{1/2} y_3^{1/2}+y_3 \leq C_{E'}{R^{1/4}} y_3^{1/2}+y_3\leq (C_{E'}+1)y_3,
$$
for some constant $C_{E'}$ depending only on $E'$.
Hence, thanks to \eqref{eq:D3A R},
\begin{align}
\partial_3 A_{\gamma P}^R&\leq -\frac{\alpha_3}{(C_{E'}+1)^3} \int_{\gamma P\cap \{R^{1/2}<y_3 < R\}}\frac{1}{y_3^2} \dx{y}\leq -\frac{\alpha_3}{(C_{E'}+1)^3} \int_{R^{1/2}}^R \cH^2(\sqrt{\gamma t}E')\frac{1}{t^2} \dx{t} \\
&\leq -c_{E'}\gamma \int_{R^{1/2}}^R \frac{1}{t} \dx{t}=-\frac{c_{E'}}{2}\gamma \log R, \label{eq:p3A log}
\end{align}
where $c_{E'}>0$ is a constant depending only on $E'$.

Hence, given $B>0$, set $\gamma_B:=\frac{2}{c_{E'}}B$ and $R_B:=\max\{{\gamma_B^2},1\}$.
Then, with these choices,
$$
\partial_3 A_{\gamma_B P}^R\leq -B\log R\qquad \text{ for all }R\geq R_B.
$$
On the other hand, recalling \eqref{eq:p3 A1} and \eqref{eq:A3}, we can apply dominated convergence to deduce that $\gamma\mapsto \partial_3 A_{\gamma P}^R$ is continuous and
$$
\partial_3 A_{\gamma P}^R \to 0\qquad \text{as }\gamma \to 0.
$$ 
Hence, by continuity, given any $\beta \in [0,B]$ and $R \ge R_B$ there exists $\gamma=\gamma(\beta,R) \in [0,\gamma_B]$ such that \eqref{eq:linear AP} holds.
\end{proof}

\begin{cor} \label{cor:matching3}
Let $N=3$, and let $u$ be an $x_N$-monotone solution in the sense of Definition \ref{def:solution}.
Then there exist a paraboloid 
$P$ as in Lemma~\ref{lem:matching_the_quadratic_part} as well as constants $\bar\gamma>0$, $\bar R>0$, and $\bar C$ such that the following holds:
for any $R \geq \bar R$ there exists $\gamma_R \in [0, \bar\gamma]$ such that 
\begin{align}
\label{eq:average R N3}
	\frac{1}{R}\fint \limits_{B_R} \left| u-u_{P_R}  \right|  \dx{x}   \leq \bar C,\qquad \text{where }P_R:=\gamma_R P.
\end{align}
\end{cor}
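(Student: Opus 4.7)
My plan is to take for $P$ the paraboloid produced by Lemma~\ref{lem:matching_the_quadratic_part} applied to the blow-down polynomial $p$ of $u$. Combining Proposition~\ref{prop:Newton_potential_expansion} with Lemma~\ref{lem:matching_the_quadratic_part}, every matching paraboloid solution satisfies $u_{\gamma P}=p+V_{\gamma P}$, so that $u-u_{\gamma P}=V_\cC-V_{\gamma P}$. Invoking Lemma~\ref{lem:improvement_of_growth_by_subtracting_affine_linear_function} separately for $u$ and for the $x_N$-monotone solution $u_{\gamma P}$, I obtain affine functions $A^R_\cC$ and $A^R_{\gamma P}$ satisfying
$$\fint_{B_R}|V_\cC-A^R_\cC|\dx{x}\leq CR,\qquad \fint_{B_R}|V_{\gamma P}-A^R_{\gamma P}|\dx{x}\leq CR,$$
with constants that can be kept uniform in $\gamma\in(0,\bar\gamma]$ once $\bar\gamma$ is fixed, since the relevant estimates in Proposition~\ref{prop:C_is_contained_in_paraboloid} for $\gamma P$ are uniform on such an interval. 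It therefore suffices to choose $\gamma_R$ so that the affine difference $A^R_\cC-A^R_{\gamma_R P}$ is $O(R)$ on $B_R$.

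\textbf{Matching the $x_3$-slope.} By Lemma~\ref{lem:DA}, both affine functions have $x'$-gradient of size $O(1)$ and value at the origin of size $O(R)$; the only contribution that could exceed $R$ after integration over $B_R$ is the $x_3$-slope, which may be as large as $\log R$. The crucial point is that, mimicking the derivation of \eqref{eq:p3 A2} but now for the coincidence set $\cC$, a change of variables combined with the inclusion $\cC+Re^3\subset\cC$ (which follows from $\partial_3 u\leq 0$ in Definition~\ref{def:solution}\eqref{eq:monotone}, forcing the coincidence set to be closed under upward translations) yields
$$\partial_3 A^R_\cC=-\alpha_3\int_{\cC\setminus(\cC+Re^3)}\frac{y_3}{|y|^3}\dx{y}\leq 0.$$
Combined with the bound $|\partial_3 A^R_\cC|\leq C\log R$ from Lemma~\ref{lem:DA}, this gives $\beta_R:=-\partial_3 A^R_\cC/\log R\in[0,B]$ for a fixed constant $B$ and all $R$ sufficiently large.

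\textbf{Conclusion.} Applying Lemma~\ref{lem:matching_D3A} with this $B$ provides constants $\bar\gamma=\gamma_B$ and $\bar R=R_B$ such that, for every $R\geq\bar R$, one can select $\gamma_R\in[0,\bar\gamma]$ with $\partial_3 A^R_{\gamma_R P}=-\beta_R\log R=\partial_3 A^R_\cC$. The affine function $A^R_\cC-A^R_{\gamma_R P}$ then has vanishing $\partial_3$-component, $x'$-gradient of size $O(1)$, and value at the origin of size $O(R)$ (uniformly in $\gamma_R\in[0,\bar\gamma]$), hence is bounded by $C'R$ on $B_R$. Combining this with the two BMO-type bounds of Step~1 produces \eqref{eq:average R N3} with a suitable $\bar C$. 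The main obstacle is precisely the verification of the sign and logarithmic bound of $\partial_3 A^R_\cC$ for the general (non-paraboloid) coincidence set, which requires adapting the monotone/dominated convergence calculation of Lemma~\ref{lem:matching_D3A}, using the upward-translation invariance of $\cC$ in place of the corresponding property for paraboloids.
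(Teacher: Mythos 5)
Your proposal is correct and follows essentially the same route as the paper's proof: take $P$ from Lemma~\ref{lem:matching_the_quadratic_part}, use the potential expansion to reduce to comparing $V_\cC$ with $V_{\gamma_R P}$, verify the sign $\partial_3 A^R_\cC<0$ via the upward-translation invariance of $\cC$ (as in the derivation of \eqref{eq:p3 A2}) together with the $O(\log R)$ bound from Lemma~\ref{lem:DA}, invoke Lemma~\ref{lem:matching_D3A} to select $\gamma_R$ matching the $x_3$-slope, and then control the remaining affine discrepancy by the BMO-type estimates of Lemmas~\ref{lem:improvement_of_growth_by_subtracting_affine_linear_function} and~\ref{lem:DA}, with constants uniform in $\gamma\in[0,\bar\gamma]$. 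The step you single out as the main obstacle is precisely the one the paper addresses by repeating the computation of \eqref{eq:p3 A2} with $\cC$ in place of $P$.
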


\begin{proof}
We begin by noticing that, since $\cC$ is contained in some paraboloid (see Proposition~\ref{prop:C_is_contained_in_paraboloid}), we can  repeat the proof of \eqref{eq:p3 A2} with $\cC$ in place of $P$ to show that
$$
\partial_3 A_{\cC}^R =-\alpha_3 \int \limits_{\cC\setminus (\cC+Re^3)} \frac{y_3}{|y|^3} \dx{y}.
$$
We now observe that, as a consequence of  the monotonicity of the contact set in the $e^3$-direction 
(a direct consequence of the monotonicity of $u$), the right hand side above is strictly negative.
Thus, thanks to Lemma~\ref{lem:DA}, there exist $\hat R > 1$ and $B>0$ such that
$$
0>\partial_3A_\cC^R \geq - B\log R \qquad \text{ for all }R \geq \hat R.
$$
This allows us to apply Lemma \ref{lem:matching_D3A}  to deduce that, if we set $\bar\gamma:=\gamma_B$ and $\bar R:=\max\{\hat R,R_B\}$, then for any $R\geq \bar R$ there exists $\gamma_R \in (0, \bar\gamma]$
such that
\begin{align}
\label{eq:D3A C P}
	\partial_3 A^R_{P_R} ={\partial_3 A^R_\cC } < 0,\qquad \text{where }P_R:=\gamma_R P.
\end{align}
Using the potential expansion of both $u$ and $u_{P_R}$ (cf. 
Proposition \ref{prop:Newton_potential_expansion} and \eqref{eq:potential_expansion_of_quadratic_matching-rescaled}), thanks to \eqref{eq:D3A C P} we find that for all $R\geq \bar R$,
\begin{align}
\fint \limits_{B_R}  \abs{u-u_{P_R} } \dx{x} 
&=  \fint \limits_{B_R} \abs{   \bV_\cC - \bV_{P_R}               } \dx{x}  \leq  \fint \limits_{B_R} \abs{ \bV_\cC - A_\cC^R  } \dx{x}  + \fint \limits_{B_R} \abs{ \bV_{P_R} - A^R_{P_R}  } \dx{x}  \\
&\quad + \fint \limits_{B_R} \abs { A_\cC^R(0) - A_{P_R}^R(0)  } \dx{x}  + \fint \limits_{B_R} \abs{ \nabla' A_\cC^R - \nabla' A_{P_R}^R  } |x| \dx{x} .
\end{align}
Applying Lemmas~\ref{lem:improvement_of_growth_by_subtracting_affine_linear_function} and~\ref{lem:DA} to $u$,
we can estimate
$$
 \fint \limits_{B_R} \abs{ \bV_\cC - A_\cC^R  } \dx{x} + \fint \limits_{B_R} \abs { A_\cC^R(0)  } \dx{x}  + \fint \limits_{B_R} \abs{ \nabla' A_\cC^R  } |x| \dx{x}  \leq CR.
$$
Also, since $\gamma_R \in (0,\bar \gamma]$, the very same arguments used for proving Lemmas~\ref{lem:improvement_of_growth_by_subtracting_affine_linear_function} and~\ref{lem:DA}  show that
$$
 \fint \limits_{B_R} \abs{ \bV_{P_R} - A_{P_R} ^R  } \dx{x} + \fint \limits_{B_R} \abs { A_{P_R} ^R(0)  } \dx{x}  + \fint \limits_{B_R} \abs{ \nabla' A_{P_R}^R  } |x| \dx{x}  \leq C_{\bar \gamma}R,
$$
where $C_{\bar \gamma}$ depends only on $P$ and $\bar \gamma$ and is thus independent of $R$.
Combining all these estimates, we conclude the validity of \eqref{eq:average R N3}.
\end{proof}

\section{Proof of Theorem~\ref{thm:main}: the case $N \geq 4$}
\label{sect:proof 4}
Given $u$ an $x_N$-monotone solution as in Definition~\ref{def:solution},  using the ACF monotonicity formula from Lemma~\ref{lem:ACF} we will show that $u$ and the comparison solutions
$u_{\gamma P -(\tau',\sigma)}$ provided by
Corollary~\ref{cor:matching4}
 \emph{are ordered}. Thanks to this important fact, the result will follow easily.

In order to simplify notation, we set
\begin{align}
\label{eq:notation P u sigma}
	P_\sigma:= \gamma P -(\tau',\sigma) \quad \text{ and } \quad u_\sigma := u_{\gamma P -(\tau',\sigma)}.
\end{align}

\begin{prop}[Ordering in dimension $N\ge 4$] \label{prop:translation_in_eN-direction_does_not_affect_ordering}
	Let $N\geq 4$. Then, for all $\sigma \in \R$,
	\begin{align}
	\text{ either } \quad  u \leq u_\sigma ~ \text{ in } \R^N \quad \text{ or } \quad u \geq u_\sigma~ \text{ in } \R^N.
	\end{align}
\end{prop}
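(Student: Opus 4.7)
The plan is to apply the Alt--Caffarelli--Friedman monotonicity formula from Lemma~\ref{lem:ACF} to the difference $w_\sigma := u - u_\sigma$. Both $u$ and $u_\sigma$ are global solutions to the obstacle problem, so by Lemma~\ref{lem:subharm}\eqref{subharm} the positive and negative parts $(w_\sigma)_\pm$ are both subharmonic, and $w_\sigma$ is continuous and in $W^{1,2}_{\rm loc}(\R^N)$ by Lemma~\ref{lem:compact C1}\eqref{item:C11}. Hence all hypotheses of Lemma~\ref{lem:ACF} are satisfied, and in particular $r\mapsto \Phi(w_\sigma,r)$ is non-decreasing in $r$.

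The key input is the sublinear $L^1$-approximation provided by Corollary~\ref{cor:matching4}, namely
\begin{align}
\frac{1}{R}\fint \limits_{B_R} |u - u_\sigma| \dx{x} \to 0 \qquad \text{as } R \to \infty.
\end{align}
Since $(w_\sigma)_\pm \leq |w_\sigma|$, this implies that $\fint_{B_{4r}} (w_\sigma)_\pm \dx{x} = o(r)$ as $r\to \infty$. Inserting this into the bound of Lemma~\ref{lem:ACF}\eqref{item:ACF L2 bound} gives
\begin{align}
\Phi(w_\sigma, r) \leq \frac{C_N}{r^4} \biggl(\fint \limits_{B_{4r}} (w_\sigma)_+ \dx{x}\biggr)^2 \biggl(\fint \limits_{B_{4r}} (w_\sigma)_- \dx{x}\biggr)^2 = \frac{C_N}{r^4} \cdot o(r^2) \cdot o(r^2) = o(1).
\end{align}

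Combined with the monotonicity in Lemma~\ref{lem:ACF}\eqref{item:ACF monotone}, the above bound forces $\Phi(w_\sigma, r) \to 0$ as $r\to \infty$, which by Lemma~\ref{lem:ACF}\eqref{item:ACF order} gives that either $w_\sigma \geq 0$ or $w_\sigma \leq 0$ on all of $\R^N$, concluding the proof. There is no real obstacle here: the essential work was to construct in Corollary~\ref{cor:matching4} a one-parameter family of paraboloid comparison solutions $u_\sigma$ with the property that $u - u_\sigma$ grows \emph{sublinearly} on average (which in turn relied on the Newtonian potential expansion of Section~\ref{section:Newton_potential_expansion_of_u} and the refined estimate $\fint_{B_R}|W_\cC|\leq CR^{1/2}$ of Lemma~\ref{lem:improvement_of_growth_N4} available only for $N\geq 4$); once this sublinear control is in hand, the ACF dichotomy closes the argument almost immediately.
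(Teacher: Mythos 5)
Your argument is correct and follows the same route as the paper: apply the ACF monotonicity formula to $w_\sigma=u-u_\sigma$, use Lemma~\ref{lem:subharm} for subharmonicity of $(w_\sigma)_\pm$ and Corollary~\ref{cor:matching4} for the sublinear $L^1$-growth, bound $\Phi(w_\sigma,r)$ via Lemma~\ref{lem:ACF}\eqref{item:ACF L2 bound} together with the monotonicity \eqref{item:ACF monotone}, and conclude with \eqref{item:ACF order}. No substantive difference from the paper's proof.
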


\begin{proof}
Thanks to
 Lemma~\ref{lem:subharm}\eqref{subharm}, we can apply Lemma~\ref{lem:ACF}\eqref{item:ACF monotone}-\eqref{item:ACF L2 bound} with $v=u-u_\sigma$ to deduce that, for every $r>0,$
\begin{align}
\Phi(u-u_\sigma,r)&\leq \limsup_{R\to \infty} \Phi(u-u_\sigma,R)\leq C_N\limsup_{R\to \infty}\biggl(\frac1R \fint \limits_{B_{4R}} \left| u-u_\sigma\right|\biggr)^4=0,
\end{align}
where the last equality follows from Corollary~\ref{cor:matching4}.
Hence, thanks to Lemma~\ref{lem:ACF}\eqref{item:ACF order} we conclude that either $u-u_\sigma \geq 0$ or $u-u_\sigma \leq 0$, as desired.
\end{proof}
We can now easily prove our main theorem.

\begin{proof}[Proof of Theorem~\ref{thm:main}: the case $N\geq 4$]
Since $u$ is an $x_N$-monotone solution,
\begin{equation} \label{eq:u 0 pos}
u(0)=0,\qquad u(-e^N)>0.
\end{equation}
On the other hand, recalling \eqref{eq:notation P u sigma}, since $P$ is a paraboloid contained in $\{x_N \geq 0\}$ with tip at the origin, recalling the definition of $P_\sigma$ (see \eqref{eq:notation P u sigma}) it follows that 
$$
0 \not\in P_\sigma \qquad \text{for $\sigma <0$},\qquad
-e^N \in P_\sigma \qquad \text{for $\sigma \gg 1$},
$$
and therefore
\begin{equation} \label{eq:usigma 0 pos}
u_\sigma(0)>0 \qquad\text{for $\sigma <0$},\qquad u_\sigma(-e^N)=0\quad \text{for $\sigma \gg 1$}.
\end{equation}
Combining \eqref{eq:u 0 pos}, \eqref{eq:usigma 0 pos}, and Proposition~\ref{prop:translation_in_eN-direction_does_not_affect_ordering},
we conclude that
\begin{equation} 
\label{eq:ordered sigma large}
u_\sigma\geq u \qquad\text{for $\sigma <0$},\qquad u_\sigma \leq u\quad \text{for $\sigma \gg 1$}.
\end{equation}
Now, let us define
$$
\bar\sigma:=\inf\set{\sigma \in \R:u_{\sigma} \leq u}.
$$
Thanks to \eqref{eq:ordered sigma large}, $\bar \sigma \in \R$ is well-defined. We now claim that $u\equiv u_{\bar \sigma}$.

Indeed, by definition of $\bar\sigma$ there exists a sequence $\sigma_k\to \bar\sigma$ such that $u_{\sigma_k}\leq u$, therefore $u_{\bar\sigma}\leq u$.
Assume now towards a contradiction that $u\not\equiv u_{\bar \sigma}$. Then there exists $\bar x \in \R^N$ such that $u_{\bar \sigma}(\bar x)<u(\bar x)$, and by continuity we can find $\epsilon>0$ such that $u_{\bar \sigma-\epsilon}(\bar x)<u(\bar x)$. Since $u$ and $u_{\bar\sigma -\epsilon}$ must be ordered (because of Proposition~\ref{prop:translation_in_eN-direction_does_not_affect_ordering}), we conclude that $u_{\bar \sigma-\epsilon}\leq u$, contradicting the definition of $\bar \sigma$.

Since $u\equiv u_{\bar \sigma}$ we conclude that $\{u=0\}$ is a paraboloid\footnote{A posteriori, by the fact that $\{u=0\}$ is a convex set contained in $\{x_N\geq 0\}$ with tip at the origin, the only possibility is that $\bar\sigma=0$ and $\tau'=0$. However this information is not relevant for our proof.}, as desired.
\end{proof}

\begin{rem}\label{rem:compact case}
It is worth noticing that our argument gives a new proof of the characterization of global solutions with compact coincidence set for any dimension $N\geq 2$.
Indeed, when $\cC=\{u=0\}$ is compact, we can write the expansion (cp. Lemma~\ref{lem:improvement_of_growth_N4} and \cite{ellipsoid})
$$
u(x)=p(x)+V_\cC(x)=p(x)-x\cdot\int_\cC \nabla K_N(y)\dx{y}+\int_\cC\Big(K_N(x-y)-K_N(y)\Big)\dx{y},
$$ 
where $K_N$ is the fundamental solution of the Laplacian (see \eqref{eq:Green}). Since $\cC$ is compact, all integrals converge and the remainder term (the lastintegral) is sublinear.
Also, in this compact case, $p(x)=\frac12 x^TQx$ where $Q \in \R^{N\times N}$ is symmetric and positive definite\footnote{This follows, for instance, from the proof of Theorem \ref{thm:MainTheorem_Intro_I} in Section~\ref{sect:proof thm}.
}. \\
Now, arguing as in Lemma~\ref{lem:matching_the_quadratic_part}, we find an ellipsoid $E \subset \R^N$ such that $u_{E}$ has $p$ as quadratic blow-down limit. In addition, since $Q$ is invertible on $\R^N$, choosing $\tau:=Q^{-1}b$ with $b:=\int_\cC \nabla K_N(y)\dx{y}$,  for each $\gamma > 0$ the function
{$u- u_{\gamma E-\tau}$} has sublinear growth at infinity  (cp. Lemma~\ref{lem:matching_the_quadratic_and_linear_part}). Then the ACF monotonicity formula implies that either $u \leq {u_{\gamma E -\tau}}$ or $u \geq u_{\gamma E -\tau}$ (cp.  Proposition~\ref{prop:translation_in_eN-direction_does_not_affect_ordering}), and finally a continuity argument  implies the existence of a value $\bar \gamma>0$ such that $u\equiv u_{\bar\gamma E -\tau}$ (cp. Proof of Theorem~\ref{thm:main}: the case $N\geq 4$), as desired.
\end{rem}

\section{Proof of Theorem~\ref{thm:main}: the case $N=3$}\label{sect:proof 3}
Let $u$ be an $x_N$-monotone solution  as in Definition~\ref{def:solution}, for each $R \geq \bar R$ let $\gamma_R \in [0, \bar\gamma]$ and let $P_R=\gamma_R P$ be the paraboloid provided by Corollary~\ref{cor:matching3},
so that \eqref{eq:average R N3} holds.

To simplify the notation, for each $R\geq \bar R$ and $x \in \R^3$ we define
\begin{align} \label{eq:definition_of_v_R_and_v_R_R}
v_R(x) := \frac{(u -u_{P_R})(Rx)}{R},
\end{align}
so that \eqref{eq:average R N3} becomes equivalent to $\norm{v_R}_{L^1(B_{1})} \leq C$ for all $R \geq \bar R$.
Hence, thanks to uniform $L^1$-bound and
 Lemma~\ref{lem:subharm}\eqref{subharm}, we can apply Lemma~\ref{lem:ACF}\eqref{item:ACF monotone}-\eqref{item:ACF L2 bound} 
 to deduce that, for all $r \in (2\bar R,R)$,
\begin{align} \label{eq:ACF_for_difference}
\Phi \bra {u - u_{P_R}, \tfrac{r}{2}} \leq \Phi \bra {u - u_{P_R}, \tfrac{R}{2}} = \Phi\bra{ v_R,\tfrac{1}{2}} \leq C.
\end{align}
Since $0 \in  \{u_{P_R} =0\}$ for all $R \geq \bar R$, it follows from Lemma~\ref{lem:compact C1}\eqref{item:compact} that, passing if necessary to a subsequence,
\begin{align}\label{eq:definition_of_tilde_u}
	u_{P_R} \to u_\infty \quad \text{ in } C^{1,\alpha}_{\rm loc}(\R^3)\quad \text{ as } R \to \infty,
\end{align}
where $u_\infty$ is a global solution to the obstacle problem.
Also, since ${P_R}=\gamma_R P$ with $\gamma_R\in [0,\bar \gamma],$ it follows that
\begin{align}\label{eq:definition_of_tilde_u2}
\{u_\infty=0\}={\gamma_\infty P}\quad\text{ for some $\gamma_\infty \in [0,\bar \gamma]$},\qquad P=\{ y' \in \sqrt{y_N} E'   \}
\end{align}
(if $\gamma_\infty=0$ then $u_\infty=p$).

Thanks to \eqref{eq:definition_of_tilde_u} and Fatou's Lemma, it follows from \eqref{eq:ACF_for_difference} that $\Phi\bra{ u-u_\infty, \tfrac{r}{2}} \leq C $ for all  $r>2\bar R$. Hence, since $\Phi$ is non-decreasing in $r$ (see Lemma~\ref{lem:ACF}\eqref{item:ACF monotone}), we obtain that
\begin{align} \label{eq:ACF_energy_of_difference_is_bounded}
\Phi\bra{ u-u_\infty, r} \leq C \qquad \text{ for all }r>0.
\end{align}

\subsection{Linear rescaling and ACF dichotomy}
Let us now introduce the linear rescaling
\begin{align}
\label{eq:wr}
	w_r(x) := \frac{(u-u_\infty)(rx)}{r}   \qquad \text{ for }  x \in \R^3
\text{ and }r>0.
\end{align}
We prove the following important dichotomy. 

\begin{prop}[ACF alternative for $u-u_\infty$] \label{prop:ACF-alternative_for_linear_rescaling}
{Let $u_\infty$, $\gamma_\infty$, and $P$ be as in \eqref{eq:definition_of_tilde_u}-\eqref{eq:definition_of_tilde_u2}},
and $w_r$ as in \eqref{eq:wr}. Then there exists a sequence $r_k \to \infty$ such that $w_{r_k} \to w$ strongly in $W^{1,2}(B_1)$ as $k \to \infty$. Also, 
\begin{enumerate}[(i)]
	\item either $w$ has constant sign inside $B_1$ (i.e. either $w\geq 0$ {a.e. in $B_1$} or $w \leq 0$ a.e. in $B_1$);\label{case:w ordered}
	\item or $w$ is a linear function, i.e. there exists $b \in \R^3$ such that $w(x) = b \cdot x$ a.e. in $B_1$.\label{case:w affine}
\end{enumerate}
\end{prop}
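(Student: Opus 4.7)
The plan is to exploit the ACF monotonicity formula applied to $u-u_\infty$, derive its limit value $L$ at infinity, and distinguish the cases $L=0$ (giving \eqref{case:w ordered}) and $L>0$ (giving \eqref{case:w affine}) via a rigidity analysis. First, since $(u-u_\infty)_\pm$ are subharmonic by Lemma~\ref{lem:subharm}\eqref{subharm}, Lemma~\ref{lem:ACF}\eqref{item:ACF monotone} gives that $r\mapsto\Phi(u-u_\infty, r)$ is non-decreasing; combined with the uniform bound~\eqref{eq:ACF_energy_of_difference_is_bounded} this forces $\Phi(u-u_\infty, r)\to L$ for some $L\in[0,\infty)$. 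A direct change of variables yields the scaling identity $\Phi(w_r,\rho)=\Phi(u-u_\infty, r\rho)$, hence $\Phi(w_r,\rho)\to L$ as $r\to\infty$ for every $\rho>0$.

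For compactness, I would start from the uniform $L^1$-bound $\|v_R\|_{L^1(B_1)}\le \bar C$ of Corollary~\ref{cor:matching3}. Subharmonicity of $v_{R,\pm}$ (Lemma~\ref{lem:subharm}\eqref{subharm}) together with the mean-value inequality promotes this to a uniform $L^\infty(B_{1/2})$-bound, and then the Caccioppoli estimate Lemma~\ref{lem:subharm}\eqref{cacc} gives a uniform $W^{1,2}(B_{1/4})$-bound on $v_R$. To transfer compactness from $v_R$ to $w_R=v_R+(u_\infty-u_{P_R})(R\,\cdot)/R$, one uses that $R\mapsto \gamma_R$ depends continuously on $R$: along a subsequence $r_k\to\infty$ chosen so that $\gamma_{r_k}\to\gamma_\infty$ sufficiently rapidly (or exactly, by intermediate value theorem if $\gamma_R$ is not eventually constant), the difference $w_{r_k}-v_{r_k}$ vanishes in $W^{1,2}(B_{1/4})$. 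Passing to a further weakly convergent subsequence $w_{r_k}\rightharpoonup w$ and applying Lemma~\ref{lem:strong_W_1_2_convergence_of_v_R_R} upgrades this to strong convergence on each $B_\delta$ with $\delta<1/4$; a diagonal extraction extends it to $B_1$. Strong convergence then allows passing to the limit in $\Phi(w_{r_k},\rho)$ to obtain $\Phi(w,\rho)=L$ for every $\rho\in(0,1)$.

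If $L=0$, the product structure of $\Phi$ together with monotonicity in $\rho$ of each weighted gradient integral forces either $\nabla w_+\equiv 0$ or $\nabla w_-\equiv 0$ throughout $B_1$; since $w\in W^{1,2}(B_1)$ cannot have jumps, $w$ has a.e.\ constant sign, giving \eqref{case:w ordered}. If $L>0$, the classical Alt--Caffarelli--Friedman rigidity (equality in ACF monotonicity with positive constant value) forces
\begin{align*}
w(x)=\alpha(x\cdot e)_+-\beta(x\cdot e)_-\quad\text{in }B_1
\end{align*}
for some unit vector $e$ and $\alpha,\beta>0$. A direct computation then gives that $\Delta w$ equals $(\alpha-\beta)$ times the $(N-1)$-dimensional surface measure of $\{x\cdot e=0\}\cap B_1$. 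On the other hand,
\begin{align*}
\Delta w_{r_k}(x)=r_k\bigl[\chi_{\{u_\infty=0\}}(r_kx)-\chi_{\{u=0\}}(r_kx)\bigr]
\end{align*}
is (for $N=3$) a bounded sequence of measures on $B_1$ supported on $(\{u=0\}/r_k)\cup(\{u_\infty=0\}/r_k)$; by Proposition~\ref{prop:C_is_contained_in_paraboloid} together with the paraboloid structure $\{u_\infty=0\}=\gamma_\infty P$, both rescaled coincidence sets shrink to the one-dimensional ray $\{se^N:s\geq 0\}$. Weak $W^{1,2}$-convergence implies distributional convergence of the Laplacians, so $\Delta w$ must be supported on this ray; as a nonzero $2$-dimensional surface measure on a hyperplane in $\R^3$ cannot be supported on a $1$-dimensional ray, we conclude $\alpha=\beta$, and hence $w(x)=\alpha(x\cdot e)$ is linear, giving \eqref{case:w affine}.

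The hardest step will be the strong $W^{1,2}$-compactness: a direct $W^{1,2}$-bound on $w_r$ is prevented by the fact that $\nabla(u-u_\infty)$ has linear growth at infinity, so the comparison $w_r\leftrightarrow v_r$ and careful control of the rate $\gamma_R\to\gamma_\infty$ are essential. The other delicate point is the distributional Laplacian computation, which must reconcile the $1$-dimensional support at the limit with the $(N-1)$-dimensional form dictated by ACF rigidity.
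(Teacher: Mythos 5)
The compactness step has a genuine gap. You propose to bound $w_R$ via $v_R$ (which has a uniform $L^1$-bound by Corollary~\ref{cor:matching3}) by showing that $w_R - v_R = \frac{(u_{P_R} - u_\infty)(R\,\cdot)}{R}$ vanishes in $W^{1,2}(B_{1/4})$ along a subsequence with $\gamma_{r_k}\to\gamma_\infty$ ``sufficiently rapidly.'' This is not justified and is in general false: by Lemma~\ref{lem:perturb A}, $V_{\gamma_R P} - V_{\gamma_\infty P}$ is close on $B_R$ (in $L^1$-average) to an affine function whose $\partial_3$-coefficient is comparable to $|\gamma_R - \gamma_\infty|\log R$, and the matching relation $\partial_3 A^R_{P_R}=\partial_3 A^R_\cC$ used to select $\gamma_R$ in Corollary~\ref{cor:matching3} typically enforces $|\gamma_R-\gamma_\infty|\sim 1/\log R$, so the rescaled difference $w_R - v_R$ stays of order one on $B_{1/4}$ rather than vanishing. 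The intermediate value theorem fallback is also unavailable: $R\mapsto\gamma_R$ is not shown to be continuous (the defining equation may admit several solutions for each $R$), and even a continuous selection converging to $\gamma_\infty$ need never equal it.

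The paper avoids any comparison with $u_{P_R}$ here: it applies the BMO-type Lemma~\ref{lem:improvement_of_growth_by_subtracting_affine_linear_function} to $u$ and $u_\infty$ separately (together with Lemma~\ref{lem:DA}) to obtain $\int_{B_4}|w_r - \alpha_r x_3| \le C$ with $\alpha_r$ a priori unbounded, and then rules out $|\alpha_{\rho_k}|\to\infty$ by a Poincar\'e argument: each of $(w_{\rho_k})_\pm$ would then vanish on a fixed fraction of $B_4$ while growing in $L^1$, forcing $\|\nabla (w_{\rho_k})_\pm\|_{L^1(B_4)}\to\infty$ and hence $\Phi(w_{\rho_k},4)=\Phi(u-u_\infty,4\rho_k)\to\infty$, contradicting~\eqref{eq:ACF_energy_of_difference_is_bounded}. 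That yields the uniform $L^1(B_4)$-bound on $w_r$, which is then upgraded to a $W^{1,2}(B_1)$-bound via Lemma~\ref{lem:ACF}\eqref{eq:bound ACF individual} and subharmonicity of $|w_r|$. Your final dichotomy ($L=0$ versus $L>0$, ACF rigidity, and the support constraint on $\Delta w$) is a valid rephrasing of the paper's appeal to \cite[Theorem~2.9]{PetrosyanShahgholianUraltseva_book}, but you should make the non-concentration estimate near $0$ explicit when passing $\Phi(w_{r_k},\rho)\to\Phi(w,\rho)$: strong $W^{1,2}$-convergence alone does not control the singular weight $|x|^{-1}$, and the paper obtains this control from the scaling of the uniform ACF bound.
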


\begin{proof} We will first prove that the family $(w_r)_{r>0}$ is bounded in $L^1(B_2)$, and use the boundedness of the ACF functional to deduce the desired dichotomy.\\
\textbf{Step 1.} \emph{There exists a constant $C$ such that $\|w_r\|_{L^1(B_{{4}})}\leq C$ for all $r$ sufficiently large.}\\
We begin by noticing that, thanks to Lemma \ref{lem:improvement_of_growth_by_subtracting_affine_linear_function}, there exist affine linear functions $A_\cC^r, A_{\gamma_\infty P}^r$ such that, for all $r$ sufficiently large,
\begin{align}
\fint \limits_{B_{4r}} \big| u-p-A_\cC^r  \big| \dx{x} \leq C r \quad \text{ and } \quad \fint \limits_{B_{4r} } \big| u_\infty - p - A_{\gamma_\infty P} ^r \big| \dx{x} \leq C r
\end{align}
(note that, in the case $u_\infty =p$, the second estimate is trivially satisfied with $A^r_{\gamma_\infty P} \equiv 0$). Setting $\cA^r(x) := \tfrac{1}{r} (A_\cC^r(rx)  - A^r_{}(rx))$ it follows that, for all $r$ sufficiently large,
\begin{align} \label{eq:w_r-A_on_sphere 1}
	\int_{B_4} \abs{ w_r - \cA^r } \dx{x} \leq C.
\end{align}
Also, applying Lemma \ref{lem:DA} to both $u$ and $u_\infty$ we deduce that $\abs{\cA^r(0)}+\abs{\nabla' \cA^r(0)  } \leq C$ , and therefore \eqref{eq:w_r-A_on_sphere 1} implies that
\begin{align} \label{eq:w_r-A_on_sphere}
	\int_{B_4} \abs{ w_r(x) - \alpha_r x_3} \dx{x} \leq C
\text{ for all $r$ sufficiently large}, \text{ where }
\alpha_r:=\partial_3 \cA^r.
\end{align}
{In particular, it follows from Chebyshev's inequality that
\begin{align} \label{eq:w_r-A_on_sphere2}
|B_4\cap \{ |w_r-\alpha_r x_3|>|\alpha_r|/2\}| \le \frac{2C}{\alpha_r} \qquad \text{for all $r$ sufficiently large}.
\end{align}
}
Suppose now towards a contradiction that the claim of this step is false.
Then there exists a sequence $\rho_k \to \infty$ as $k\to\infty$ such that $\|w_{\rho_k}\|_{L^1(B_4)}\to +\infty$. It follows from \eqref{eq:w_r-A_on_sphere} that $|\alpha_{\rho_k}| \to +\infty$ as $k\to\infty$.

We may assume that $\alpha_{\rho_k}\to +\infty$. Then \eqref{eq:w_r-A_on_sphere} and \eqref{eq:w_r-A_on_sphere2} imply that, for sufficiently large $k$,
\begin{align}
&|\{(w_{\rho_k})_-=0\} \cap {B_4}| \geq |\{w_{\rho_k}<0\} \cap {B_4}| \geq \frac12 |{B_4}\cap \{x_3<-1/2\}|,\qquad \|(w_{\rho_k})_-\|_{L^1(B_2)} \to +\infty,\\
&|\{(w_{\rho_k})_+=0\} \cap {B_4}| \geq |\{w_{\rho_k}>0\} \cap {B_4}|\geq \frac12 |{B_4}\cap \{x_3>1/2\}|,\qquad \|(w_{\rho_k})_+\|_{L^1(B_4)} \to +\infty.
\end{align}
This allows us to apply Poincar\'e's inequality to $(w_{\rho_k})_\pm$. We obtain that
$$
+\infty \leftarrow \|(w_{\rho_k})_\pm\|_{L^1(B_4)}\leq C\|\nabla (w_{\rho_k})_\pm\|_{L^1(B_4)}.
$$
Since $\frac{1}{|x|}\geq \frac12$ inside $B_4$, it follows  by H\"older's inequality that
$$
\|\nabla (w_{\rho_k})_\pm\|_{L^1(B_4)}^2 \leq C\int_{B_4}|\nabla (w_{\rho_k})_\pm |^2\dx{x}\leq C \int_{B_4} \frac{|\nabla (w_{\rho_k})_\pm |^2}{|x|}\dx{x}.
$$
Thus, recalling that $N=3$ and the definition of $\Phi$, we conclude that
$$
+\infty \leftarrow \Phi(w_{\rho_k},4)=\Phi(u-u_\infty, 4\rho_k),
$$
a contradiction to \eqref{eq:ACF_energy_of_difference_is_bounded}.\\
\textbf{Step 2.} \emph{Proof of the dichotomy.}\\
Thanks to Step 1 and Lemma \ref{lem:ACF}\eqref{eq:bound ACF individual}, there exists $\bar r>0$ such that
$$
\int \limits_{B_1} \frac{|\nabla (w_r)_\pm|^2}{|x|}\dx{x}\leq C\qquad \text{ for all }r>\bar r.
$$
This implies the following non-concentration estimate for the ACF integrands: for each
$\delta \in (0,1)$ and every $r >\frac{\bar r}{\delta}$
\begin{align}
\label{eq:non conc}
0 \leq \int \limits_{B_\delta} \frac{\abs{\nabla (w_r)_\pm}^2}{|x|}\dx{x} = \delta^2 \int \limits_{B_1} \frac{ \abs{\nabla (w_{r \delta})_\pm  }^2 }{|x|}\dx{x} \leq C \delta^2 .
\end{align}
In particular, since $\frac{1}{|x|}\geq 1$ inside $B_1$, we have that
$\|\nabla w_r\|_{L^2(B_1)} \leq C$ for all $r \geq \bar r.$ 
{Also, since $|w_r|$ is subharmonic (as a consequence of Lemma~\ref{lem:subharm}\eqref{subharm}), it follows from Step 1 that $\|w_r\|_{L^2(B_2)}\leq C$ for all $r\geq \bar r$.
Thus, there exists a sequence $r_k\to \infty$}
such that
\begin{align}
w_{r_k} \rightharpoonup w \quad \text{ weakly in  } W^{1,2}(B_1) \text{ as } k \to \infty.
\end{align}
We now observe that, since the coincidence sets of $u$ and $u_\infty$ satisfy the properties in Proposition~\ref{prop:C_is_contained_in_paraboloid},
it follows that $\frac{1}{r}(\{u=0\}\cup\{u_\infty=0\})\to \{ se^3 : s\geq 0\}$ as $r \to \infty$. Hence, since $\Delta (u-u_\infty)=0$ outside $\{u=0\}\cup\{u_\infty=0\}$, we deduce that
 $\Delta w= 0$ outside $\{ se^3 : s\geq 0\}$. However, because this set has $2$-capacity zero in $\R^3$, we conclude that $\Delta w \equiv 0$ and therefore, by Lemma \ref{lem:strong_W_1_2_convergence_of_v_R_R},
\begin{align} \label{eq:strong_convergence_in_W12_ACF_alternative_case_1}
w_{r_k} \to w \quad \text{ strongly in  } W^{1,2}_{\loc}(B_1) \text{ as } k \to \infty.
\end{align}
Combining the strong convergence \eqref{eq:strong_convergence_in_W12_ACF_alternative_case_1}  with the non-concentration estimate \eqref{eq:non conc} we conclude that,
for each $\rho \in (0,1)$,
$$
\limsup_{k\to \infty} \biggl|\int \limits_{B_\rho} \frac{\abs{\nabla (w_{r_k})_\pm}^2}{|x|}\dx{x}-\int \limits_{B_\rho} \frac{\abs{\nabla w_\pm}^2}{|x|}\dx{x}\biggr| \leq
\limsup_{k\to \infty} \biggl|\int \limits_{B_\delta} \frac{\abs{\nabla (w_{r_k})_\pm}^2}{|x|}\dx{x}-\int \limits_{B_\delta} \frac{\abs{\nabla w_\pm}^2}{|x|}\dx{x}\biggr| \leq C\delta^2,
$$
so
$$
\int \limits_{B_\rho} \frac{\abs{\nabla (w_{r_k})_\pm}^2}{|x|}\dx{x}\to \int \limits_{B_\rho} \frac{\abs{\nabla w_\pm}^2}{|x|}\dx{x}\qquad \text{as }k \to \infty.
$$
In particular $\Phi(w_{r_k}, \rho) \to \Phi(w,\rho)$  as $k \to \infty$ and therefore, by the monotonicity of the ACF functional as well as \eqref{eq:ACF_energy_of_difference_is_bounded},
\begin{align}
\infty> \Phi(u-u_\infty, \infty)=\lim \limits_{k \to \infty} \Phi(u-u_\infty, r_k \rho) = \lim \limits_{k \to \infty} \Phi(w_{r_k}, \rho) = \Phi(w,\rho)\qquad \text{ for each }\rho \in (0,1).
\end{align}
Thus $\rho \mapsto \Phi(w,\rho)$ is constant on $(0,1)$, so the result follows from \cite[Theorem 2.9]{PetrosyanShahgholianUraltseva_book}, bearing in mind that $w$ is harmonic in $B_1$.
\end{proof}

As we shall see later, if we are in Case~\eqref{case:w ordered} of Proposition~\ref{prop:ACF-alternative_for_linear_rescaling}, then it is easy to conclude. On the other hand, Case~\eqref{case:w affine} requires a delicate argument that is performed in the next section.

\subsection{Fine adjustment of $u_\infty$ at large scales} 
\label{sec:fine_adujustment_of_the_comparison_solution_on_large_spheres}

The goal of this section is to show that, if Case~\eqref{case:w affine} of Proposition~\ref{prop:ACF-alternative_for_linear_rescaling} occurs, then we can find some fine adjustments of $u_\infty$ at large scales to cancel the linear function $b\cdot x$ appearing in the blow-down limit.

\begin{prop}[Fine adjustment of the matching] \label{prop:calibration_in_the_case_that_tilde_u_is_paraboloid_solution}
{Let $u_\infty$, $\gamma_\infty$, and $P$ be as in \eqref{eq:definition_of_tilde_u}-\eqref{eq:definition_of_tilde_u2}},
and assume that
\begin{align} \label{eq:assumption_that_u-tilde_u_converges_in_lin_scaling_to_lin_function}
	\frac{(u-u_{\gamma_\infty P})(r_k x)}{r_k} \to b \cdot x \quad \text{ strongly in } W^{1,2}(B_1), r_k \to \infty \text{ as } k \to \infty.
\end{align}
Then there exist $\tau' \in \R^2$ and a sequence $\gamma_{k}\to \gamma_\infty$, $\gamma_k\in [0,\bar\gamma+1]$ such that, for each $\sigma \in \R$,
\begin{align}\label{eq:u ugammak 0}
\frac{(u-u_{\gamma_{k} P - (\tau',\sigma)})(r_k x)}{r_k} \to 0 \quad \text{ strongly in } L^1(B_{1}) \text{ as } k \to \infty.
\end{align}
\end{prop}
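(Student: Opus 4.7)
The approach uses the potential expansion $u_{\gamma P-(\tau',\sigma)}(y)=p(y'+\tau')+V_{\gamma P}(y+(\tau',\sigma))$ together with the BMO-type approximation from Lemma~\ref{lem:improvement_of_growth_by_subtracting_affine_linear_function} to match the linear function $b\cdot x$ in the blow-down through the two free parameters. Writing $b=(b',b_N)\in\R^{2}\times\R$, the horizontal component $b'\cdot x'$ will be matched through $\tau'$ via the quadratic polynomial $p$, and the vertical component $b_N x_N$ through $\gamma_k$ via the dominant slope $\partial_3 A^{r_k}_{\gamma P}$ of the BMO affine approximation. Since $p(r_k x'+\tau')-p(r_k x')=r_k(Q\tau')\cdot x'+p(\tau')$ and $Q$ is positive definite, the choice $\tau':=Q^{-1}b'$ reproduces exactly the slope $b'\cdot x'$ modulo the constant $p(\tau')$, which vanishes after division by $r_k$.

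To produce $b_N x_N$ I would seek $\gamma_k$ solving
\[
\partial_3 A^{r_k}_{\gamma_k P}-\partial_3 A^{r_k}_{\gamma_\infty P}=b_N.
\]
From the identity $\partial_3 A^R_{\gamma P}=-\alpha_3\int_{\gamma P\setminus(\gamma P+Re^3)}y_3/|y|^3\dx{y}$ derived in the proof of Lemma~\ref{lem:matching_D3A}, the function $\gamma\mapsto\partial_3 A^{r_k}_{\gamma P}$ is continuous and strictly decreasing on $[0,\bar\gamma+1]$, and an argument analogous to~\eqref{eq:p3A log} yields the uniform lower bound $|\partial_\gamma\partial_3 A^{r_k}_{\gamma P}|\gtrsim\log r_k$ in a fixed neighborhood of $\gamma_\infty$. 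The intermediate value theorem then provides $\gamma_k\in[0,\bar\gamma+1]$ satisfying the displayed equation, with $|\gamma_k-\gamma_\infty|=O(1/\log r_k)\to 0$; the required sign of the adjustment, in the degenerate case $\gamma_\infty=0$, is delivered by the monotonicity of $u$ which forces $b_N\leq 0$.

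It then remains to show, using the assumption~\eqref{eq:assumption_that_u-tilde_u_converges_in_lin_scaling_to_lin_function}, that
\[
h_k(x):=\frac{V_{\gamma_k P}(r_k x+(\tau',\sigma))-V_{\gamma_\infty P}(r_k x)}{r_k}\to b_N x_N \quad\text{in }L^1(B_1).
\]
Applying a compactness argument analogous to Step 2 in the proof of Proposition~\ref{prop:ACF-alternative_for_linear_rescaling}, the BMO bound of Lemma~\ref{lem:improvement_of_growth_by_subtracting_affine_linear_function} gives $L^1$-boundedness of $h_k$, the Laplacian $\Delta h_k$ is supported on sets of vanishing rescaled measure so any weak limit is harmonic, and Lemma~\ref{lem:strong_W_1_2_convergence_of_v_R_R} upgrades this to strong $W^{1,2}_{\loc}$-convergence. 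The strong limit $h$ is harmonic with linear growth and hence affine: $h(x)=c+\alpha\cdot x$. Matching with the BMO expansion identifies $\alpha^3=b_N$ (from the tuning of $\gamma_k$), $\alpha'=0$ (from continuity of $\gamma\mapsto\nabla'A^{r_k}_{\gamma P}$ at $\gamma_\infty$ combined with $\gamma_k\to\gamma_\infty$), and $c=\lim_k h_k(0)=\lim_k V_{\gamma_k P}(\tau',\sigma)/r_k=0$ since $V_{\gamma_k P}(\tau',\sigma)$ is bounded uniformly in $k$.

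The principal obstacle is establishing the uniform quantitative lower bound $|\partial_\gamma\partial_3 A^{r_k}_{\gamma P}|\gtrsim\log r_k$ near $\gamma_\infty$: this estimate is the mechanism converting the fixed adjustment $b_N$ into a vanishing change in $\gamma_k$, which in turn guarantees $\gamma_k\to\gamma_\infty$, the uniform validity of the BMO-type constants as $\gamma_k$ varies, and the vanishing of the residual horizontal slope of $h$.
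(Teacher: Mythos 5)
Your overall architecture parallels the paper's: $\tau'=Q^{-1}b'$ handles the horizontal slope, and the vertical slope $b_3$ is matched by tuning $\gamma_k$ so that the $\partial_3$-coefficient of the BMO affine approximation of $V_{\gamma_k P}-V_{\gamma_\infty P}$ equals $b_3$, while $\gamma_k\to\gamma_\infty$ follows because that coefficient has magnitude $\sim\log r_k$. There are, however, two points worth flagging.

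First, you propose to solve $\partial_3 A^{r_k}_{\gamma_k P}-\partial_3 A^{r_k}_{\gamma_\infty P}=b_3$ exactly by the intermediate value theorem, which requires the quantitative bound $\abs{\partial_\gamma\partial_3 A^{R}_{\gamma P}}\gtrsim\log R$. You correctly identify this as the crux but misattribute its source: it is not ``analogous to \eqref{eq:p3A log}'', which estimates $\partial_3 A^R_{\gamma P}$ itself rather than its $\gamma$-derivative. The cleanest route is the scaling identity $\partial_3 A^R_{\gamma P}=\gamma\,\partial_3 A^{R/\gamma}_P$ (established in the paper as \eqref{eq:scaling D3A}): differentiating in $\gamma$ gives $\partial_3 A^{R/\gamma}_P-\frac{R}{\gamma}\,\partial_r\partial_3 A^{r}_P\big|_{r=R/\gamma}$, and the first term is $\sim-\log(R/\gamma)$ by Lemma~\ref{lem:DA} and \eqref{eq:p3A log} while the second is $O(1)$ by Lemma~\ref{lem:perturb A3}. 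The paper avoids this by matching only the dominant term $(\lambda-1)\partial_3 A^R_{\gamma_\infty P}=b_3$ (with $\lambda=\gamma/\gamma_\infty$) and showing the residual $\lambda[\partial_3 A^{R/\lambda}_{\gamma_\infty P}-\partial_3 A^R_{\gamma_\infty P}]$ is $O(1/\log R)$ via Lemma~\ref{lem:perturb A3}; either version works once the derivative bound is written out.

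Second, and more substantively, your final identification of $h$ is not valid as stated. You conclude ``the strong limit $h$ is harmonic with linear growth and hence affine'', but $h$ lives only on $B_1$, so no Liouville-type theorem applies; a bounded harmonic function on $B_1$ need not be affine. In fact the whole compactness step is unnecessary: once $\gamma_k$ is tuned so that $\partial_3\cA^{r_k}_{(\gamma_k P,\gamma_\infty P)}=b_3$, the BMO estimate of Lemma~\ref{lem:perturb A} together with $\nabla'\cA^{r_k},\cA^{r_k}(0)/r_k\to 0$ (and the translation bound of Lemma~\ref{lem:translate V}) gives directly
\begin{align}
\fint_{B_1}\Big|\,h_k(x)-b_3x_3\,\Big|\dx{x}\;\le\;C\big(\omega(|\gamma_k-\gamma_\infty|)+r_k^{-1/4}\big)\;\to\;0,
\end{align}
which is the $L^1$-convergence you need. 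This is essentially what the paper does in deriving \eqref{eq:u gamma gammainfty 2}. So your route is fundamentally the paper's route, with an unnecessary (and in its stated form incorrect) compactness wrapper around the final step, and with the key quantitative derivative bound asserted rather than proved.
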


To prove this result, we will need a series of preliminary estimates on the behaviour of paraboloid solutions under translation and scaling.
We collect these in the lemmas below.

{
\begin{lem}[Translations of paraboloid solutions] \label{lem:translate V}
Let $P$ be as in \eqref{eq:definition_of_tilde_u2}.
Then there exists a constant $C=C(P,\bar\gamma)$ such that,
for every $\gamma \in [0,\bar\gamma+1]$ and every $\tau \in \R^3$,
\begin{align}
\fint \limits_{B_R} \abs{ V_{\gamma P}(x+\tau) - V_{\gamma P}(x)} \dx{x} \leq C|\tau|
R^{3/4} \qquad \text{ for all } R\geq \max\{|\tau|,1\}.
\end{align}
\end{lem}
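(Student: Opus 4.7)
The plan is to estimate the difference directly from the integral representation
\[
V_{\gamma P}(x+\tau)-V_{\gamma P}(x)=\alpha_3\int_{\gamma P}\bigl[G(x+\tau,y)-G(x,y)\bigr]\dx{y},
\]
splitting the domain of integration at $|y|=4R$ and bounding the two resulting pieces separately. The main geometric input throughout will be the cross-section bound $\cH^{2}(\gamma P\cap\{y_3=t\})\leq C\gamma\,t$, valid uniformly for $\gamma\in[0,\bar\gamma+1]$, which in particular yields $|\gamma P\cap B_{4R}|\leq C\gamma R^{2}$.

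For the \emph{far-field} piece $\gamma P\setminus B_{4R}$, I will write $G(x+\tau,y)-G(x,y)=\int_{0}^{1}\tau\cdot\nabla_{x}G(x+s\tau,y)\dx{s}$ and observe that, since $|y|\geq 4R\geq 2(|x|+|\tau|)$ forces $|z-y|\geq|y|/2$ along the entire segment $z\in[0,x+s\tau]$, a first-order Taylor expansion of $\nabla_{x}G(x,y)=\tfrac{y-x}{|x-y|^{3}}-\tfrac{y}{|y|^{3}}$ around $x=0$ yields the pointwise estimate
\[
|G(x+\tau,y)-G(x,y)|\leq C\frac{|\tau|(|x|+|\tau|)}{|y|^{3}}\leq C\frac{|\tau|R}{|y|^{3}}.
\]
Integrating this against the cross-section bound produces $|\tau|R\int_{4R}^{\infty}(C\gamma t)\,t^{-3}\dx{t}\leq C|\tau|$, uniformly in $x\in B_{R}$.

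For the \emph{near-field} piece $\gamma P\cap B_{4R}$, I would proceed along the same lines but apply Fubini to average in $x$ first. The triangle inequality bounds $|\nabla_{x}G(x+s\tau,y)|$ by $|x+s\tau-y|^{-2}+|y|^{-2}$. The first term is straightforward, since $\fint_{B_{R}}|x+s\tau-y|^{-2}\dx{x}\leq C(R+|y|+|\tau|)/R^{3}\leq C/R^{2}$ on the relevant range, so its integral over $\gamma P\cap B_{4R}$ is bounded by $R^{-2}|\gamma P\cap B_{4R}|\leq C\gamma$.

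The main obstacle is the singular term $|y|^{-2}$, which in $\R^{3}$ would not be integrable near the vertex $y=0$, but is rescued by the linear degeneration of the paraboloid's cross-sections there. An explicit computation in polar coordinates on each horizontal slice $\{y_3=t\}$, using $\sqrt{\gamma t}\,E'\subset B'_{C\sqrt{\gamma t}}$, gives $\int_{\gamma P\cap B_{1}}|y|^{-2}\dx{y}\leq C$, while on the range $y_3\in[1,4R]$ the cross-section bound yields $\int_{1}^{4R}(C\gamma t)\,t^{-2}\dx{t}\leq C\gamma\log R$. Combining the far-field and near-field contributions produces the strengthened bound $\fint_{B_{R}}|V_{\gamma P}(x+\tau)-V_{\gamma P}(x)|\dx{x}\leq C|\tau|(1+\log R)$, which for $R\geq 1$ is dominated by $C|\tau|R^{3/4}$ (since $1+\log R\leq CR^{3/4}$), concluding the proof.
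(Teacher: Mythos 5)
Your proof is correct, but it follows a genuinely different route from the paper's. The paper's argument is soft and recycles earlier machinery: by the fundamental theorem of calculus it reduces the translation increment to $C|\tau|\fint_{B_{2R}}|\nabla V_{\gamma P}|$, then applies H\"older's inequality and the Caccioppoli estimate of Lemma~\ref{lem:subharm}\eqref{cacc} (noting $V_{\gamma P}=u_{\gamma P}-p$ is a difference of two solutions) to convert this into $\frac{C|\tau|}{R}\bigl(\fint_{B_{4R}}|V_{\gamma P}|^2\bigr)^{1/2}$, and finally invokes the uniform subquadratic growth bound $|V_{\gamma P}(x)|\le C_{\bar\gamma}(1+|x|)^{7/4}$ from \eqref{eq:sublinear} to produce exactly $R^{3/4}$. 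Your approach instead works directly with the kernel $G(x,y)$, a first-order Taylor expansion of $\nabla_xG$ in the far field (using that $\nabla_xG(0,y)=0$), a far/near splitting at $|y|=4R$, and Fubini plus the paraboloid cross-section bound $\cH^{2}(\gamma P\cap\{y_3=t\})\le C\gamma t$ in the near field. This is more computational but also strictly sharper: it yields $C|\tau|(1+\log R)$ rather than $C|\tau|R^{3/4}$, whereas the paper's method is limited by the $7/4$ exponent it imports from Lemma~\ref{lem:existence_of_newton_potential}. Two small points to make the write-up airtight: the far-field set $\gamma P\setminus B_{4R}$ is not literally $\gamma P\cap\{y_3\ge 4R\}$, so you should either observe that for $R\ge 1$ and $\gamma\le\bar\gamma+1$ any far-field point with $y_3<4R$ still has $|y|\ge 4R$ and the measure of that sliver is $O(\gamma R^2)$, contributing $O(|\tau|\gamma)$; and the near-vertex integral $\int_{\gamma P\cap\{y_3\le 1\}}|y|^{-2}\dx{y}$ should be written out (it evaluates to $\int_0^1\pi\log(1+C\gamma/t)\dx{t}\le C$ uniformly for $\gamma$ bounded), since the naive bound $|y|\ge y_3$ alone would suggest a logarithmic divergence there.
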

\begin{proof}
Using the fundamental theorem of calculus, for any $R\geq |\tau|$  we can estimate
$$
\fint \limits_{B_R} \abs{ V_{\gamma P}(x+\tau) - V_{\gamma P}(x)} \dx{x} \leq C|\tau| \fint \limits_{B_{2R}} \abs{ \nabla V_{\gamma P}(x) }\dx{x}.
$$
We now apply H\"older's inequality and Lemma \ref{lem:subharm}\eqref{cacc} (note that $V_{\gamma P} = u_{\gamma P}- p$, where both $u_{\gamma P}$ and $p$ are solutions to the obstacle problem)
to the right-hand side and obtain
\begin{align}
\fint \limits_{B_R} \abs{ V_{\gamma P}(x+\tau) - V_{\gamma P}(x)} \dx{x} \leq C|\tau|
\sqrt{\fint \limits_{B_{2R}} \abs{ \nabla V_{\gamma P}(x) }^2\dx{x}}
\leq C\frac{|\tau|}{R}
\sqrt{\fint \limits_{B_{4R}} \abs{ V_{\gamma P}(x) }^2\dx{x}}.
\end{align}
On the other hand, we know from \eqref{eq:sublinear} that there exists a constant $C_{\bar\gamma}$ such that $|V_{\gamma P}(x)|\leq C_{\bar\gamma}(1+|x|)^{7/4}$ for all $\gamma \in [0,\bar\gamma+1]$.
Combining these facts we obtain that
\begin{align}
\fint \limits_{B_R} \abs{ V_{\gamma P}(x+\tau) - V_{\gamma P}(x)} \dx{x} \leq C|\tau|
R^{3/4}  \qquad \text{ for all } R\geq \max\{|\tau|,1\},
\end{align}
and the result follows.
\end{proof}
}

\begin{lem}[The generalized Newtonian potential of scaled paraboloids] \label{lem:perturb A}
{Let $u_\infty$, $\gamma_\infty$, and $P$ be as in \eqref{eq:definition_of_tilde_u}-\eqref{eq:definition_of_tilde_u2}}, and for $\gamma \in [0,\bar\gamma+1]$ and $R>0$ define the 
affine function 
\begin{multline} \label{eq:definition_of_cal_A_diff_of _scaled_paraboloids}
\cA^R_{(\gamma P,\gamma_\infty P)}(x) := A^R_{\gamma P}(x) - A^R_{\gamma_\infty P}(x) \\
= \alpha_3 \int \limits_{\R^3}  \bra { - \frac{1}{|y|} + \frac{1}{|-R e^3 -y|} - \frac{x \cdot y}{|y|^3} - \frac{(-R e^3 -y) \cdot (x+ R e^3)}{|-R e^3 -y|^3}            }\bra {\chi_{\gamma P}-\chi_{\gamma_\infty P} }(y)  	\dx{y}.
\end{multline}
Then there exists {a modulus of continuity $\omega=\omega_{P,\bar\gamma}:
[0,+\infty)\to [0,+\infty)$ such that $\omega(0)=0$ and, for all $R\geq 1$,
\begin{align}
	\fint \limits_{B_R} \abs{ V_{\gamma P}- V_{ \gamma_\infty P}  -\cA^R_{(\gamma P,\gamma_\infty P)}} \dx{x} & \leq \omega(|\gamma-\gamma_\infty|)R,
\label{eq:potential vary lambda}
\end{align}
\begin{align}
|\nabla' \cA^R_{(\gamma P,\gamma_\infty P)}| \leq \omega(|\gamma-\gamma_\infty|) \qquad \text{and}\qquad|\cA^R_{(\gamma P,\gamma_\infty P)}(0)| \leq \omega(|\gamma-\gamma_\infty|)R.
\label{eq:affine vary lambda}
\end{align}
}
\end{lem}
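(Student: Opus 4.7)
My plan is to exploit the representation $\cA^R_{(\gamma P, \gamma_\infty P)}(x) = \alpha_3 \int_{\R^3} a^R(x,y)(\chi_{\gamma P} - \chi_{\gamma_\infty P})(y)\,\dx{y}$, where $a^R$ is the kernel appearing in \eqref{eq:definition_of_A_R}, together with the elementary geometric fact that the horizontal section of the symmetric difference $\gamma P \triangle \gamma_\infty P$ at height $y_3 \geq 0$ has $\cH^2$-measure exactly $|\gamma - \gamma_\infty|\, y_3\, |E'|$. I expect this to deliver the linear modulus $\omega(s) = Cs$ with $C = C(P, \bar\gamma)$.

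I would first dispatch the bounds \eqref{eq:affine vary lambda}. The gradient bound is essentially free: since $E'$ is symmetric under $y' \mapsto -y'$, so is each $\gamma P$, whereas a direct computation yields $\nabla_{x'} a^R(x,y) = -y'\bigl(|y|^{-3} - |Re^3+y|^{-3}\bigr)$, which is odd in $y'$. Hence $\nabla' A^R_{\gamma P} = 0$ for every $\gamma$ and every $R$, so in fact $\nabla' \cA^R_{(\gamma P,\gamma_\infty P)} \equiv 0$, which is stronger than required. For the bound on $|\cA^R_{(\gamma P,\gamma_\infty P)}(0)|$, I would split the defining integral into $\{|y| \leq 2R\}$ and $\{|y| > 2R\}$. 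In the far region, \eqref{eq:Taylor ell} gives $|a^R(0,y)| \leq CR^2/|y|^3$, so the section-area formula bounds the contribution by $CR^2\,|\gamma - \gamma_\infty|\,\int_{2R}^\infty t^{-2}\,\dx{t} \leq C|\gamma - \gamma_\infty|\,R$. In the near region, \eqref{eq:ell R J1} yields $|a^R(0,y)| \leq 3/|y|$; combined with $|y| \geq y_3$ and the section-area formula, this gives
\[
\int_{(\gamma P \triangle \gamma_\infty P)\cap\{y_3 \leq 2R\}} \frac{1}{|y|}\,\dx{y} \leq \int_0^{2R}\frac{|\gamma - \gamma_\infty|\, t\, |E'|}{t}\,\dx{t} \leq C|\gamma - \gamma_\infty|\,R.
\]

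For the main estimate \eqref{eq:potential vary lambda} I would write
\[
V_{\gamma P}(x) - V_{\gamma_\infty P}(x) - \cA^R_{(\gamma P,\gamma_\infty P)}(x) = \alpha_3 \int_{\R^3}\bigl[G(x,y) - a^R(x,y)\bigr]\bigl(\chi_{\gamma P} - \chi_{\gamma_\infty P}\bigr)(y)\,\dx{y}
\]
and split again at $|y|=2R$. In the far region, combining \eqref{eq:Taylor G} and \eqref{eq:Taylor ell} produces $|G(x,y) - a^R(x,y)| \leq CR^2/|y|^3$ uniformly for $x \in B_R$ and $|y| \geq 2R$, and the same section-area argument yields a contribution of order $|\gamma - \gamma_\infty|\,R$ to $\fint_{B_R}|\,\cdot\,|\,\dx{x}$. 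In the near region I would apply Fubini to swap $\dx{x}$ and $\dx{y}$, exploit the scaling identity $G(Rz, Ry) - a^R(Rz, Ry) = R^{-1}[G(z,y) - a^1(z,y)]$ (obtained by direct substitution), and perform the changes of variable $x = Rz$ and $y = R\tilde y$. The estimate then reduces to showing that $\int_{B_1}|G(z,\tilde y) - a^1(z, \tilde y)|\,\dx{z}$ is uniformly bounded for $\tilde y$ in the rescaled symmetric difference, which is contained in the narrow set $\{0 \leq \tilde y_3 \leq 2,\ |\tilde y'|^2 \leq 2\bar\gamma\,\tilde y_3\}$. On this set, $|e^3 + \tilde y| \geq 1$ takes care of all non-singular terms, and the local integrability of $1/|z - \tilde y|$ on $B_1$ handles the remaining term, precisely as in the derivation of \eqref{eq:G ell}. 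Tracking the scaling factors $R^3$ from $\dx{y}$, $R^2$ from the scaled $\int_{B_R} \dx{x}$, and $|B_R|^{-1} \sim R^{-3}$, against the volume $O(|\gamma - \gamma_\infty|/R)$ of the rescaled symmetric difference, yields a total contribution to $\fint_{B_R}|V_{\gamma P} - V_{\gamma_\infty P} - \cA^R|$ bounded by $C|\gamma - \gamma_\infty|\,R$, as required.

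The main obstacle will be the near region $\{|y| \leq 2R\}$ in \eqref{eq:potential vary lambda}: there $G - a^R$ is singular at $x = y$, and the naive pointwise estimate that led to Lemma~\ref{lem:improvement_of_growth_by_subtracting_affine_linear_function} only gives $CR$, which has the correct size but misses the desired factor $|\gamma - \gamma_\infty|$. The scaling identity is what rescues the argument, by transferring the singularity onto a fixed ball where it is uniformly integrable, so that the shrinking measure of the rescaled symmetric difference supplies the modulus $|\gamma - \gamma_\infty|$.
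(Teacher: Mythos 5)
Your proposal is correct, and it follows the paper's proof almost step for step: the same splitting into a far region (handled by the Taylor-type bound on $a^R$ and the section-area identity $\cH^2\bigl(\{y'\in\sqrt{t}(\sqrt{\gamma}E'\triangle\sqrt{\gamma_\infty}E')\}\bigr)=|\gamma-\gamma_\infty|\,t\,\cH^2(E')$) and a near region handled via Fubini, the scaling identity $G(Rz,Ry)-a^R(Rz,Ry)=R^{-1}[G(z,y)-a^1(z,y)]$, and the uniform bound $\int_{B_1}|z-\tilde y|^{-1}\dx{z}\le C$. Splitting at $|y|=2R$ rather than $y_3=2R$ is immaterial for a paraboloid. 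The scaling-factor bookkeeping in your last sentence is slightly garbled in the exposition ($\dx{x}=R^3\dx{z}$, not $R^2$; the $R^2$ arises only after combining with the $R^{-1}$ from the kernel identity), but the net count $\frac{R^5}{|B_R|}\cdot\frac{|\gamma-\gamma_\infty|}{R}\sim|\gamma-\gamma_\infty|R$ is right.

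There is one place where you genuinely improve on the paper. For the bound on $\nabla'\cA^R_{(\gamma P,\gamma_\infty P)}$, the paper estimates $\int_{\gamma P\triangle\gamma_\infty P}|\nabla_{x'}a^R|\dx{y}$ by brute force and must invoke dominated convergence, because the contribution from the cusp region $\{y_3\le 1\}$ is $\int_{(\gamma P\triangle\gamma_\infty P)\cap\{y_3\le1\}}|y|^{-2}\dx{y}$, which is only of order $|\gamma-\gamma_\infty|\,|\log|\gamma-\gamma_\infty||$ and therefore blocks a linear modulus. Your observation that $\nabla_{x'}a^R(x,y)=-y'\bigl(|y|^{-3}-|Re^3+y|^{-3}\bigr)$ is odd in $y'$ while each $\gamma P$ is symmetric under $y'\mapsto-y'$ (inherited from the centered ellipsoid $E'$ of Lemma~\ref{lem:matching_the_quadratic_part}) gives $\nabla'A^R_{\gamma P}\equiv0$ for every $\gamma$ and $R$, hence $\nabla'\cA^R_{(\gamma P,\gamma_\infty P)}\equiv0$. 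This bypasses the delicate cusp integral entirely and is what legitimizes your claim of the explicit linear modulus $\omega(s)=Cs$ — which the paper does not assert, contenting itself with an unspecified $\omega$. The remaining two estimates do admit linear bounds by the section-area formula, so your sharper statement is consistent.
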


\begin{proof} 
As we shall see, the proof of is slight modification of the ones of Lemmas~\ref{lem:improvement_of_growth_by_subtracting_affine_linear_function} and~\ref{lem:DA}.\\
\textbf{Step 1.} \emph{Proof of \eqref{eq:potential vary lambda}.} \\
We follow the notation used in the proof of Lemma~\ref{lem:improvement_of_growth_by_subtracting_affine_linear_function}.
Recalling the definition of $G(x,y)$ in Definition~\ref{NP} and of $a^R(x,y)$ in \eqref{eq:ell R}, and recalling that $A\triangle B$ denotes the symmetric difference of two sets $A$ and $B$, we have
\begin{multline}
\abs { V_{ {\gamma P}}(x) - V_{ \gamma_\infty P} (x) -\cA^R_{(\gamma P,\gamma_\infty P)}(x)    } 
\leq \int_{({{\gamma P} \triangle \gamma_\infty P})\cap \{0\leq y_3 \leq 2R\}}|G(x,y)-a^R(x,y)|\dx{y}\\
+\int_{({{\gamma P} \triangle \gamma_\infty P})\cap \{y_3 \geq 2R\}}|G(x,y)-a^R(x,y)|\dx{y}=:J_1(x)+J_2(x).
\end{multline}
Using \eqref{eq:Taylor G} and \eqref{eq:Taylor ell}, for $x \in B_R$ we estimate
\begin{multline}
J_2(x)\leq CR^2\int_{2R}^\infty \cH^2\left(\Big\{y' \in \sqrt{t}\big(\sqrt{\gamma} E' \triangle \sqrt{\gamma_\infty} E'\big)\Big\}\right) \frac{1}{t^3}\dx{t}\\
=CR^2\int_{2R}^\infty |\gamma-\gamma_\infty|\cH^2(E') \frac{1}{t^2}\dx{t} \leq C|\gamma-\gamma_\infty|R
\end{multline}
(cp. \eqref{eq:bound J2 R}).
For $J_1(x)$, we write $x=Rz$ with $z \in B_1$ and we perform the change of variables $y\mapsto Ry$, so that
\begin{align}
J_1(x)&\leq \int_{({{\gamma P} \triangle \gamma_\infty P})\cap \{y_3 \leq 2R\}}|G(x,y)-a^R(x,y)|\dx{y}\\
&\leq CR^2\int_{\left\{y' \in \sqrt{\frac{y_3}{R}}\left(\sqrt{\gamma} E' \triangle \sqrt{\gamma_\infty} E'\right)\right\}\cap \{0\leq y_3 \leq 2\}}|G(z,y)-a^1(z,y)|\dx{y}
\end{align}
(cp. \eqref{eq:x to z}).
Combining these two bounds, we get
\begin{multline}
\fint \limits_{B_R}  \abs { V_{ {\gamma P}}(x) - V_{ \gamma_\infty P} (x) -\cA^R_{(\gamma P,\gamma_\infty P)}(x)  } \dx{x}\leq \fint \limits_{B_R} \big(J_1(x)+J_2(x)) \dx{x}
\\
 \leq C|\gamma-\gamma_\infty|R
+ CR^2 \fint \limits_{B_1} \int_{\left\{y' \in \sqrt{\frac{y_3}{R}}\left(\sqrt{\gamma} E' \triangle \sqrt{\gamma_\infty} E'\right)\right\}\cap \{0\leq y_3 \leq 2\}}|G(z,y)-a^1(z,y)|\dx{y}\dx{z}
\end{multline}
(cp. \eqref{eq:J12 z}).
Concerning the last integral we note that \eqref{eq:G ell} holds on the domain of integration. Hence, since $\left|\left\{y' \in \sqrt{\frac{y_3}{R}}\left(\sqrt{\gamma} E' \triangle \sqrt{\gamma_\infty} E'\right)\right\}\cap \{0\leq y_3 \leq 2\}\right| \leq \frac{C|\gamma-\gamma_\infty|}R$,
using \eqref{eq:int G on B1} and  Fubini's Theorem we obtain \eqref{eq:potential vary lambda} 
(cp. \eqref{eq:J12 final}).\\
\textbf{Step 2.} \emph{Proof of \eqref{eq:affine vary lambda}.} \\
For the first bound we note that, for $y \in {\gamma P}\cup \gamma_\infty P$, \eqref{eq:D'ell} holds.
Hence
\begin{align}
\int_{({{\gamma P} \triangle \gamma_\infty P})}\abs { \nabla_{x'} a^R(x,y) } \dx{y}&\leq 2\int_{({{\gamma P} \triangle \gamma_\infty P}) \cap \set {y_3 \leq 1} }  \frac{1}{|y|^2}  \dx{y}
+2\int_{({{\gamma P} \triangle \gamma_\infty P}) \cap \set {y_3\geq 1} }  \frac{|y'|}{|y|^3} \dx{y}\\
 &\leq 2\int_{({{\gamma P} \triangle \gamma_\infty P}) \cap \set {y_3 \leq 1} }  \frac{1}{|y|^2}  \dx{y}\\
 &\qquad +C\int_{1}^\infty \cH^2\left(\Big\{y' \in \sqrt{t}\big(\sqrt{\gamma} E' \triangle \sqrt{\gamma_\infty} E'\big)\Big\}\right) \frac{t^{1/2}}{t^3}\dx{t}\\
& \leq 2\int_{({{\gamma P} \triangle \gamma_\infty P}) \cap \set {y_3 \leq 1} }  \frac{1}{|y|^2}  \dx{y}+C|\gamma-\gamma_\infty|\int_{1}^\infty \frac{1}{t^{3/2}}\dx{t}.
\end{align}
By dominated convergence, this proves the existence of a modulus of continuity $\omega$ such that
 $|\nabla' \cA^R_{(\gamma P,\gamma_\infty P)}| \leq \omega(|\gamma-\gamma_\infty|).$

For the second bound we write
\begin{multline}
\int_{({{\gamma P} \triangle \gamma_\infty P})}|a^R(0,y)|\dx{y}\leq \int_{({{\gamma P} \triangle \gamma_\infty P})\cap \{y_3 \leq 2R\}}|a^R(0,y)|\dx{y}\\
+\int_{({{\gamma P} \triangle \gamma_\infty P})\cap \{y_3 \geq 2R\}}|a^R(0,y)|\dx{y}=:J_1+J_2.
\end{multline}
Using \eqref{eq:Taylor ell}, we immediately get 
\begin{multline}
J_2
\leq CR^2\int_{2R}^\infty \cH^2\left(\Big\{y' \in \sqrt{t}\big(\sqrt{\gamma} E' \triangle \sqrt{\gamma_\infty} E'\big)\Big\}\right) \frac{1}{t^3}\dx{t}\\
\leq CR^2|\gamma-\gamma_\infty|\int_{2R}^\infty \frac{1}{t^2}\dx{t}\leq C|\gamma-\gamma_\infty| R,
\end{multline}
(cp. \eqref{eq:I2 ell}).
Concerning $J_1$, using \eqref{eq:ell R J1} we have
\begin{align}
J_1&\leq 3\int_{0}^R \cH^2\left(\Big\{y' \in \sqrt{t}\big(\sqrt{\gamma} E' \triangle \sqrt{\gamma_\infty} E'\big)\Big\}\right) \frac{1}{t}\dx{t} \leq C|\gamma-\gamma_\infty| \int_{0}^R \dx{t}\leq C|\gamma-\gamma_\infty| R
\end{align}
(cp.  \eqref{eq:ell R J1 2}).
This implies that $|\cA^R_{(\gamma P,\gamma_\infty P)}(0)| \leq C|\gamma-\gamma_\infty|R$,
concluding the proof.
\end{proof}

\begin{lem} \label{lem:perturb A3}
Let $P$ be as in \eqref{eq:definition_of_tilde_u2}, and fix $\gamma \in [0,\bar\gamma+1]$.
Then there  exists a constant $C=C(P,\bar\gamma)$ such that for all $R\geq 1$,
$$
\abs{ \frac{\partial}{\partial R}\partial_3A_{\gamma P}^R } \leq \frac{C}{R}.
$$
\end{lem}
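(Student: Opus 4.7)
The plan is to start from the formula \eqref{eq:p3 A1}, namely
\[
\partial_3 A_{\gamma P}^R = \alpha_3 \int_{\gamma P}\biggl(-\frac{y_3}{|y|^3} + \frac{R + y_3}{|Re^3 + y|^3}\biggr)\dx{y},
\]
and differentiate in $R$ under the integral sign. Only the second term depends on $R$, and a direct computation gives
\[
\partial_R\biggl(\frac{R+y_3}{|Re^3+y|^3}\biggr) = \frac{1}{|Re^3+y|^3} - \frac{3(R+y_3)^2}{|Re^3+y|^5}.
\]
Since the integrand in \eqref{eq:p3 A1} is only conditionally convergent on $\gamma P$, the differentiation would be justified via the same Abel-type regularization already used in the proof of \eqref{eq:p3 A2}: replace the exponents $3$ and $5$ by $3+\varepsilon$ and $5+\varepsilon$, differentiate for fixed $\varepsilon>0$ (where each integrand is absolutely integrable on $\gamma P$), establish the desired estimate uniformly in $\varepsilon\in(0,1)$, and then pass to the limit $\varepsilon\to 0^+$.

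Having obtained
\[
\frac{\partial}{\partial R}\partial_3 A_{\gamma P}^R = \alpha_3 \int_{\gamma P}\biggl(\frac{1}{|Re^3+y|^3} - \frac{3(R+y_3)^2}{|Re^3+y|^5}\biggr)\dx{y},
\]
the next step is to shift by $z = y + Re^3$ (changing the domain to $\gamma P + Re^3$) and then rescale by $z = Rw$. The shifted integrand is $\frac{1}{|z|^3} - \frac{3z_3^2}{|z|^5}$, which is $(-3)$-homogeneous in $z$, so the factor $R^3$ from $\dx{z} = R^3\dx{w}$ cancels exactly with the homogeneity to yield
\[
\frac{\partial}{\partial R}\partial_3 A_{\gamma P}^R = \alpha_3 \int_{(\gamma/R) P + e^3}\biggl(\frac{1}{|w|^3} - \frac{3 w_3^2}{|w|^5}\biggr)\dx{w},
\]
using $\tfrac{1}{R}(\gamma P + Re^3) = (\gamma/R) P + e^3$. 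The rescaled paraboloid has opening parameter $\gamma/R$, so its section at height $w_3\ge 1$ has $\cH^2$-area $\tfrac{\gamma}{R}(w_3-1)\cH^2(E')$. Combined with the pointwise bounds $\tfrac{1}{|w|^3}\le \tfrac{1}{w_3^3}$ and $\tfrac{w_3^2}{|w|^5} \le \tfrac{1}{w_3^3}$ (which follow from $|w|\ge w_3\ge 1$), integration of $(w_3-1)/w_3^3$ over $[1,\infty)$ is finite and produces
$|\partial_R\partial_3 A_{\gamma P}^R|\leq \tfrac{C\gamma}{R}$, with $C$ depending only on $E'$, hence on $P$.

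The only mild obstacle is the rigorous justification of the differentiation under the integral, since \eqref{eq:p3 A1} is not absolutely convergent on $\gamma P$; this is handled exactly as in the derivation of \eqref{eq:p3 A2}. Once past that point, the argument reduces to a clean scaling computation, and the $1/R$ decay emerges naturally from a simple geometric fact: the shifted paraboloid $\gamma P + Re^3$ has transverse width of order $\sqrt{\gamma R}$ near its tip, so after rescaling by $R$ the opening parameter contracts to $\gamma/R$, producing one extra power of $R$ in the denominator when integrated against the $(-3)$-homogeneous kernel.
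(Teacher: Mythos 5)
Your proof is correct and amounts to essentially the same computation as the paper's: differentiate under the integral sign, bound the resulting kernel pointwise using $|Re^3+y|\geq R+y_3$, and integrate against the linearly growing sections of the paraboloid; your change of variables $z=y+Re^3$, $z=Rw$ is a cosmetic rearrangement that makes the homogeneity visible but is equivalent to the paper's direct split $\int_0^R + \int_R^\infty$. One small remark: the Abel regularization you invoke is not actually needed here, since the integrand in \eqref{eq:p3 A1} and its $R$-derivative are \emph{absolutely} integrable on $\gamma P$ by the bound \eqref{eq:A3} (the regularization in the derivation of \eqref{eq:p3 A2} was used to split the two terms, each individually divergent, not because the combined integrand is conditionally convergent); so differentiation under the integral sign follows directly from dominated convergence, and the paper indeed just does this.
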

\begin{proof}
Recalling \eqref{eq:DAR}, it follows that
\begin{align}
\left|\frac{\partial}{\partial R} \partial_3A_{\gamma P}^R\right|&\leq \alpha_3 \int \limits_{\gamma P} \left|\frac{\partial}{\partial R}\bra { - \frac{y_3}{|y|^3} + \frac{R +y_3}{|R e^3 +y|^3}      }\right| \dx{y}= \alpha_3 \int \limits_{\gamma P} \left|\frac{\partial}{\partial R}\biggl(\frac{R +y_3}{|R e^3 +y|^3}  \biggr)    \right| \dx{y}\\
&\leq C\int \limits_{\gamma P} \frac{1}{|R e^3 +y|^3}     \dx{y} \leq C\int_{0}^\infty \cH^2\left(\big\{y' \in \sqrt{\gamma t} E' \big\}\right) \frac{1}{(R+t)^3}\dx{t}\\
&\leq C\int_{0}^\infty  \frac{t}{(R+t)^3}\dx{t}\leq \frac{C}{R^3}\int_0^R t\dx{t}+C\int_R^\infty \frac{1}{t^2}\dx{t}\leq \frac{C}{R}.
\end{align}
\end{proof}

\begin{proof}[Proof of Proposition~\ref{prop:calibration_in_the_case_that_tilde_u_is_paraboloid_solution}]
We note that, given $\gamma \in [0,\bar\gamma+1],$ we have (cp. \eqref{eq:translate dilate u})
\begin{align}
u_{\gamma P-(\tau',\sigma)}(x)&=u_{\gamma P}(x'+\tau',x_3+\sigma)=p(x'+\tau')+V_{\gamma P}(x'+\tau',x_3+\sigma)\\
&=p(x')+\nabla p(x')\cdot \tau'+p(\tau')+\Big(V_{\gamma P}(x'+\tau',x_3+\sigma)-V_{\gamma P}(x)\Big)\\
&\qquad\qquad\qquad\qquad\qquad\qquad+\Big(V_{\gamma P}(x)-V_{\gamma_\infty P}(x)\Big)+V_{\gamma_\infty P}(x)\\
&=u_{\gamma_\infty P}(x)+\nabla p(x')\cdot \tau'+\Big(V_{\gamma P}(x)-V_{\gamma_\infty P}(x)\Big) \\
&\qquad \qquad\qquad\qquad\qquad\qquad +\Big[p(\tau')+\Big(V_{\gamma P}(x'+\tau',x_3+\sigma)-V_{\gamma P}(x)\Big)\Big].
\end{align}
Let $b=(b',b_3)\in \R^2\times \R$.
Recalling that $\nabla p(x')=Qx'$ with $Q$ invertible, we choose $\tau':= Q^{-1}b'$ so that $\nabla p(x')\cdot \tau'=b'\cdot x'$.
Also, for each $R>1$,
\begin{multline}
V_{\gamma P}(x)-V_{\gamma_\infty P}(x)=\Big(V_{\gamma P}(x)-V_{\gamma_\infty P}(x)-\cA^R_{(\gamma P,\gamma_\infty P)}(x)\Big)\\
+\cA^R_{(\gamma P,\gamma_\infty P)}(0)+\nabla'\cA^R_{(\gamma P,\gamma_\infty P)}\cdot x'+\partial_3 \cA^R_{(\gamma P,\gamma_\infty P)}x_3.
\end{multline}
Combining all that and applying Lemmas~\ref{lem:translate V} and~\ref{lem:perturb A}, we obtain that for $x\in B_R,$
\begin{align}
\label{eq:u gamma gammainfty}
\fint \limits_{B_R}\Big| u_{\gamma P-(\tau',\sigma)}(x)-u_{\gamma_\infty P}(x)-b'\cdot x' -\partial_3 \cA^R_{(\gamma P,\gamma_\infty P)}x_3 \Big|\dx{x} \leq 
C\left(|\gamma-\gamma_\infty|R+R^{7/8}\right).
\end{align}
We now distinguish two cases, depending on whether $\gamma_\infty>0$ or not.\\
\textbf{Case 1.} \emph{$\gamma_\infty>0$}. \\
We note that, by a change of variables, for each $\lambda>0$ 
\begin{align}
\partial_3A^R_{\lambda\gamma_\infty P}&=\int \limits_{\lambda \gamma_\infty P} \bra { - \frac{y_3}{|y|^3} + \frac{R +y_3}{|R e^3 +y|^3}      }\dx{y}=\lambda^3\int \limits_{\gamma_\infty P} \bra { - \frac{\lambda z_3}{\lambda^3|z|^3} + \frac{R +\lambda z_3}{|R e^3 +\lambda z|^3}      }\dx{z}\\
&=\lambda\int \limits_{\gamma_\infty P} \bra { - \frac{z_3}{|z|^3} + \frac{R/\lambda +z_3}{|R/\lambda e^3 + z|^3}      }\dx{z}=\lambda \partial_3A^{R/\lambda}_{\gamma_\infty P}.
\label{eq:scaling D3A}
\end{align}
Thus, if we set $\lambda:=\frac{\gamma}{\gamma_\infty}$,
\begin{equation}
\label{eq:p3 AR scaling}
\partial_3 \cA^R_{(\gamma P,\gamma_\infty P)}=\partial_3A^R_{\gamma P} - \partial_3A^R_{\gamma_\infty P}=\lambda\big[\partial_3A^{R/\lambda}_{\gamma_\infty P}-\partial_3A^{R}_{\gamma_\infty P}\big]+ (\lambda-1)\partial_3A^R_{\gamma_\infty P}.
\end{equation}
Assuming now
that $\gamma$ is sufficiently close to $\gamma_\infty$ so that $\lambda \in [1/2,2]$,
it follows from Lemma~\ref{lem:perturb A3} that
$$
\big|\partial_3A^{R/\lambda}_{\gamma_\infty P}-\partial_3A^{R}_{\gamma_\infty P}\big|\leq \int_{R/\lambda}^R\abs{ \frac{\partial}{\partial r}\partial_3A_{\gamma P}^r } \,dr \leq C|\lambda-1| ={C \frac{|\gamma-\gamma_\infty|}{|\gamma_\infty|}}.
$$
Thus, combining \eqref{eq:u gamma gammainfty} and \eqref{eq:p3 AR scaling}, we obtain
\begin{align}
\fint \limits_{B_R}\Big| u_{\gamma P-(\tau',\sigma)}(x)-u_{\gamma_\infty P}(x)-b'\cdot x' -\frac{\gamma-\gamma_\infty}{\gamma_\infty} \partial_3A^R_{\gamma_\infty P} x_3 \Big|\dx{x} \leq 
C\left(|\gamma-\gamma_\infty|R+R^{7/8}\right).
\end{align}
We now observe that, as a consequence of Lemma~\ref{lem:DA} and \eqref{eq:p3A log}, there exist constants $0<c_\infty <C_\infty$ such that
$$
-c_\infty \log R\geq  \partial_3A^R_{\gamma_\infty P}\geq -C_\infty \log R \qquad \text{ for all $R$ sufficiently large}.
$$
In particular, for each $R$ sufficiently large we can choose $\gamma=\gamma_R \in \left[\gamma_\infty - \frac{|b_3|\gamma_\infty}{c_\infty\log R},\gamma_\infty+\frac{|b_3|\gamma_\infty}{c_\infty\log R}\right]$ such that $\frac{\gamma-\gamma_\infty}{\gamma_\infty} \partial_3A^R_{\gamma_\infty P}=b_3$, and with such a choice we have
\begin{align}
\label{eq:u gamma gammainfty 2}
\fint \limits_{B_R}\Big| u_{\gamma_R P-(\tau',\sigma)}(x)-u_{\gamma_\infty P}(x)-b'\cdot x' -b_3 x_3 \Big|\dx{x} \leq 
C\left(\frac{R}{\log R}+R^{7/8}\right)
\end{align}
for sufficiently large $R$.
Choosing $R=r_k$ and $\gamma_k=\gamma_{r_k}$, and combining \eqref{eq:assumption_that_u-tilde_u_converges_in_lin_scaling_to_lin_function} and \eqref{eq:u gamma gammainfty 2},
we obtain
\eqref{eq:u ugammak 0}.\\
\textbf{Case 2.} \emph{$\gamma_\infty=0$}. \\
Note that in this case $V_{\gamma_\infty P}\equiv 0$.

We first claim that $b_3 \leq 0$.
Indeed, from Definition \ref{def:solution}\eqref{eq:conincidence_set_of_u_is_on_the_right} we know that $u-p$ is harmonic in $\{y_3 \leq 0\}$, which combined with \eqref{eq:assumption_that_u-tilde_u_converges_in_lin_scaling_to_lin_function} implies that
\begin{align}
\frac{(u-p)(r_kx)}{r_k} \to b \cdot x \quad \text{ uniformly in } B_{1/2}(-e^3) \text{ as } k \to \infty.
\end{align}
On the other hand, Definition \ref{def:solution}\eqref{eq:conincidence_set_of_u_is_on_the_right}-\eqref{PDE_asymptotics} implies that $(u-p)(-t e^3) \geq 0$
for all $t \geq 0$. Thus 
\begin{align}
0 \leq \frac{(u-p)(- r_k e^3)}{r_k} \to -b_3 \quad \text{ as } k \to \infty,
\end{align} 
proving the claim

Now, if $b_3=0$ then the result follows by choosing $\gamma=0$. Otherwise we note that,
 thanks to \eqref{eq:scaling D3A},
$$
\partial_3 \cA^R_{(\gamma P,\gamma_\infty P)}=\partial_3 A^R_{\gamma P}=\gamma\partial_3A^{R/\gamma}_P.
$$
Moreover, for each $\gamma \in (0,1]$, we know from  Lemma~\ref{lem:DA} and \eqref{eq:p3A log} that
{
$$
-c_\infty \log (R/\gamma) \geq \partial_3A^{R/\gamma}_P \geq -C_\infty\log (R/\gamma) \qquad \text{ for all $R$ sufficiently large}.
$$
}
Thus, recalling that $b_3 < 0$, for each $R$ sufficiently large we can find $\gamma=\gamma_R \in \left(0,\frac{2|b_3|}{c_\infty\log R}\right]$ such that $\gamma_R\partial_3A^{R/\gamma_R}_P=b_3$.
Choosing again $R=r_k$ and $\gamma_k=\gamma_{r_k}$, we conclude as before.
\end{proof}

\subsection{Ordering of solutions and conclusion}

\begin{prop}[Ordering in dimension $N=3$]\label{prop:ordered 3}
{Let $u_\infty$, $\gamma_\infty$, and $P$ be as in \eqref{eq:definition_of_tilde_u}-\eqref{eq:definition_of_tilde_u2}}. 
Then there exists $\tau' \in \R^2$ such that for each $\sigma \in \R$,
	\begin{align}
	\text{ either } \quad  u \leq u_{\gamma_\infty P-(\tau',\sigma)} ~ \text{ in } \R^N \quad \text{ or } \quad u \geq u_{\gamma_\infty P-(\tau',\sigma)}~ \text{ in } \R^N.
	\end{align}
\end{prop}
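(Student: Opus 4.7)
The plan is to apply Proposition~\ref{prop:ACF-alternative_for_linear_rescaling} to $u - u_\infty$, extracting a sequence $r_k \to \infty$ with $w_{r_k}(x) := (u - u_\infty)(r_k x)/r_k \to w$ strongly in $W^{1,2}(B_1)$, where $w$ is harmonic in $B_1$ and either \emph{(a)} has constant sign in $B_1$, or \emph{(b)} equals a linear function $w(x) = b \cdot x$. In each alternative I will produce a $\tau' \in \R^2$, uniform in $\sigma$, and then show $\Phi(u - u_{\gamma_\infty P - (\tau',\sigma)}, R) \equiv 0$; Lemma~\ref{lem:ACF}\eqref{item:ACF order} will then yield the claimed ordering.

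Case \emph{(b)} is handled directly by Proposition~\ref{prop:calibration_in_the_case_that_tilde_u_is_paraboloid_solution}, which supplies $\tau' \in \R^2$ and a sequence $\gamma_k \to \gamma_\infty$, both independent of $\sigma$, such that $(u - u_{\gamma_k P - (\tau',\sigma)})(r_k\cdot)/r_k \to 0$ in $L^1(B_1)$ for every $\sigma$. Lemma~\ref{lem:ACF}\eqref{item:ACF L2 bound} combined with the scaling invariance of $\Phi$ gives $\Phi(u - u_{\gamma_k P - (\tau',\sigma)}, r_k) \to 0$, and by ACF monotonicity $\Phi(u - u_{\gamma_k P - (\tau',\sigma)}, R) \le \Phi(u - u_{\gamma_k P - (\tau',\sigma)}, r_k) \to 0$ for every fixed $R$ once $r_k > R$. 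The $C^{1,\alpha}_{\rm loc}$ convergence $u_{\gamma_k P - (\tau',\sigma)} \to u_{\gamma_\infty P - (\tau',\sigma)}$ from Lemma~\ref{lem:compact C1}\eqref{item:compact}, together with the uniform $C^{1,1}$ bound of Lemma~\ref{lem:compact C1}\eqref{item:C11}, then allows me to pass $k \to \infty$ in the ACF integrand at fixed $R$ via dominated convergence, yielding $\Phi(u - u_{\gamma_\infty P - (\tau',\sigma)}, R) = 0$ for every $R$.

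In Case \emph{(a)} I would take $\tau' = 0$ and set $u_\sigma^P := u_{\gamma_\infty P - (0,\sigma)} = u_\infty(\cdot + \sigma e^3)$. Lemma~\ref{lem:translate V} applied to $V_{\gamma_\infty P}$ gives $\fint_{B_R}|u_\infty - u_\sigma^P| \le C|\sigma| R^{3/4}$, so $(u_\infty - u_\sigma^P)(r_k \cdot)/r_k \to 0$ in $L^1_{\rm loc}(\R^3)$; combined with $w_{r_k} \to w$ this shows $\tilde w_{r_k} := (u - u_\sigma^P)(r_k \cdot)/r_k \to w$ in $L^1_{\rm loc}(B_1)$. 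Assuming without loss of generality that $w \ge 0$, for each fixed $\rho \in (0,1)$ the $L^1(B_\rho)$-convergence gives $\fint_{B_\rho}(\tilde w_{r_k})_- \to 0$, while $\fint_{B_\rho}(\tilde w_{r_k})_+$ remains uniformly bounded (combining Step~1 of the proof of Proposition~\ref{prop:ACF-alternative_for_linear_rescaling} with Lemma~\ref{lem:translate V}). Applying Lemma~\ref{lem:ACF}\eqref{item:ACF L2 bound} at scale $R_k := \rho r_k/4$ (so that $B_{4R_k} = B_{\rho r_k}$) and unpacking the rescaling then yields $\Phi(u - u_\sigma^P, R_k) \to 0$, and ACF monotonicity forces $\Phi(u - u_\sigma^P, R) = 0$ for every $R$.

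The hard part is Case \emph{(a)}: propagating the ordering from the specific comparison solution $u_\infty$ to every $e^3$-translate $u_\sigma^P$ requires quantitative control on how the generalized Newtonian potential changes under translation, and this is exactly what Lemma~\ref{lem:translate V} provides. Its bound $|\tau|\,R^{3/4}$ is strictly sublinear in $R$, which is precisely what is needed to ensure that the linear-scale blow-down $w$ of $u - u_\sigma^P$ coincides with that of $u - u_\infty$; this in turn allows the single choice $\tau' = 0$ to handle every $\sigma \in \R$ simultaneously, as required by the statement.
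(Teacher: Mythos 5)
Your proof is correct and follows essentially the same strategy as the paper: in case (i) of the ACF dichotomy you take $\tau'=0$ and use Lemma~\ref{lem:translate V} to propagate to every $\sigma$-translate, in case (ii) you invoke Proposition~\ref{prop:calibration_in_the_case_that_tilde_u_is_paraboloid_solution}, and in each branch you conclude by showing the ACF functional vanishes and applying Lemma~\ref{lem:ACF}\eqref{item:ACF order}. The minor organizational differences (working on $B_\rho$ with $\rho<1$ rather than directly on $B_1$, and the order in which the $k\to\infty$ limit and the $\Phi$-monotonicity estimate are performed in case (ii)) are purely cosmetic.
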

\begin{proof}
Let $w_{r_k}$ and $w$ be as in Proposition~\ref{prop:ACF-alternative_for_linear_rescaling}. We distinguish between the two cases in the dichotomy.\\
{\bf Case 1}. {\it Proposition~\ref{prop:ACF-alternative_for_linear_rescaling}\eqref{case:w ordered} holds.}\\
In this case we have that either $\norm{(w_{r_k})_+}_{L^1(B_{1})}\to 0$ or $\norm{(w_{r_k})_-}_{L^1(B_{1})}\to 0$ as $k \to \infty.$ 
Also, 
since 
$$
u_{\gamma_\infty P} (x)-u_{\gamma_\infty P-(0,\sigma)} (x)=u_{\gamma_\infty P} (x)-u_{\gamma_\infty P} (x+\sigma e^3)=V_{\gamma_\infty P}(x)-V_{\gamma_\infty P}(x+\sigma e^3),
$$
defining $w_{k,\sigma}(x):=\frac{(u-u_{\gamma_\infty P-(0,\sigma)})(r_k x)}{r_k} $ Lemma~\ref{lem:translate V} yields
$$
\norm{w_k-w_{k,\sigma}}_{L^1(B_1)}=\frac{1}{r_k} \fint \limits_{B_{r_k}}|V_{\gamma_\infty P}(x)-V_{\gamma_\infty P}(x+\sigma e^3)|\dx{x} \leq C\frac{|\sigma|}{r_k^{1/8}} \to 0\qquad \text{as }k \to \infty.
$$
Hence, for each $\sigma \in \R$,  either $\norm{(w_{k,\sigma})_+}_{L^1(B_{1})}\to 0$ or $\norm{(w_{k,\sigma})_-}_{L^1(B_{1})}\to 0$ as $k \to \infty.$ 
Therefore, thanks to Lemma~\ref{lem:ACF}\eqref{item:ACF monotone}-\eqref{item:ACF L2 bound}, for each $\rho \in (0,+\infty)$  it holds
\begin{multline}
0 \leq \Phi\big(u-u_\infty(\cdot+\sigma e^3),\rho\big) \leq \lim_{k\to \infty} \Phi\big(u-u_\infty(\cdot+\sigma e^3), \tfrac{r_k}{4}\big) \\
=  \lim_{k\to \infty} \Phi\big(w_{k,\sigma}, \tfrac{1}{4}\big) 
\leq C \limsup_{k\to \infty}  \norm{(w_{k,\sigma})_+}^2_{L^1(B_{1})} \norm{(w_{k,\sigma})_-}^2_{L^1(B_{1})} =0.
\end{multline}
Applying Lemma~\ref{lem:ACF}\eqref{item:ACF order} proves the result with $\tau'=0$.\\
{\bf Case 2}. {\it Proposition~\ref{prop:ACF-alternative_for_linear_rescaling}\eqref{case:w affine} holds.}\\
Let $\gamma_k$ and $\tau'$ be as in Proposition~\ref{prop:calibration_in_the_case_that_tilde_u_is_paraboloid_solution}, and define
$w_{k,\sigma}'(x):=\frac{(u-u_{\gamma_{k} P - (\tau',\sigma)})(r_k x)}{r_k}$.
Note that, since $\gamma_k \to \gamma_\infty$ and solutions to the obstacle problems are locally bounded in $C^{1,1}$ (cf. Lemma~\ref{lem:compact C1}\eqref{item:C11}), 
\begin{align}
	u_{\gamma_{k} P - (\tau',\sigma)} \to u_{\gamma_\infty P - (\tau',\sigma)}=u_\infty(\cdot + (\tau',\sigma)) \quad \text{ in } C^{1,\alpha}_{\rm loc}(\R^3)\quad \text{ as } k \to \infty,
\end{align}
which implies in particular that, given $\rho \in (0,+\infty)$,
$$
\Phi\big(u-u_{\gamma_{k} P - (\tau',\sigma)} ,\rho\big) \to \Phi\big(u-u_\infty(\cdot+(\tau',\sigma)),\rho\big)\quad \text{ as } k \to \infty.
$$
Since  $\|w_{k,\sigma}'\|_{L^1(B_{1})}\to 0$, it follows from Lemma~\ref{lem:ACF}\eqref{item:ACF monotone}-\eqref{item:ACF L2 bound} that, for each $\rho \in (0,+\infty)$,\begin{align}
0 &\leq \Phi\big(u-u_\infty(\cdot+(\tau',\sigma)),\rho\big) =  \lim_{k\to \infty} \Phi\big(u-u_{\gamma_{k} P - (\tau',\sigma)} ,\rho\big) \leq \limsup_{k\to \infty} \Phi\big(u-u_{\gamma_{k} P - (\tau',\sigma)} , \tfrac{r_k}{4}\big) \\
&=  \limsup_{k\to \infty} \Phi\big(w_{k,\sigma}', \tfrac{1}{4}\big) 
\leq C \limsup_{k\to \infty}  \norm{(w_{k,\sigma}')_+}^2_{L^1(B_{1})} \norm{(w_{k,\sigma}')_-}^2_{L^1(B_{1})} = 0.
\end{align}
Hence, the result follows again from Lemma~\ref{lem:ACF}\eqref{item:ACF order}.
\end{proof}

We can now prove our main result.
\begin{proof}[Proof of Theorem~\ref{thm:main}: the case $N=3$]
The proof is almost identical to the one of Theorem~\ref{thm:main} for $N\geq 4$ given in Section~\ref{sect:proof 4}, the only difference being the application of
Proposition~\ref{prop:ordered 3} instead of Proposition~\ref{prop:translation_in_eN-direction_does_not_affect_ordering}.
\end{proof}

\section{Proof of Theorem \ref{thm:MainTheorem_Intro_I}}\label{sect:proof thm}
As explained after the statement of \cite[Main Theorem**]{esw_arXiv}, every non-cylindrical solution with unbounded coincidence set is $x_N$-monotone. Hence, to prove Theorem~\ref{thm:MainTheorem_Intro_I} for $N\geq 3$, it suffices to characterize $x_N$-monotone solutions for $N\geq 3$, which is exactly the result of Theorem~\ref{thm:main}. For completeness and convenience of the interested reader, we present here an original alternative argument to explain how Theorem \ref{thm:MainTheorem_Intro_I} follows from Theorem~\ref{thm:main}.

\begin{proof}[Proof of Theorem \ref{thm:MainTheorem_Intro_I}]
Let $u$ be a global solution with non-empty coincidence. We can assume that $u$ is non-cylindrical (see Definition~\ref{def:cylindrical}), as otherwise a restriction of $u$ coincides with a non-cylindrical global solution $v$ in some lower dimension and it suffices to prove Theorem~\ref{thm:MainTheorem_Intro_I} for $v$.
Also, as mentioned in the introduction, Theorem \ref{thm:MainTheorem_Intro_I} has already been proved for $N=2$ in \cite{Sakai_Null_quadrature_domains}.

Hence, we assume that $u$ is a non-cylindrical global solution in dimension $N \geq 3$, whose coincidence set 
$\cC:=\{u=0\}$ has non-empty interior (recall that $\cC$ is convex, see Remark~\ref{rem:convexity_of_coincidence_set_of_global_solution}). 

Set $u_r(x):=\frac{u(rx)}{r^2}$, and define $g(x):=\lim_{r\to \infty}u_r(x)$ (cf. Lemma~\ref{lem:C11}). We distinguish several cases.\\
$\bullet$  {\bf Case 1.} {\it $g$ is a half-space solution.}\\
By a translation and a rotation we can assume that $\{u=0\}\subset \{x_N \leq 0\}$ and that $0 \in \partial \{u=0\}$. Then, since $\{u=0\}$ is convex we deduce that $\{u_r=0\}=\frac{1}r\{u=0\} \subset \{u=0\}$ for every $r \geq 1$, and letting $r\to \infty$ we conclude that $\{g=0\}\subset \{u=0\}$.

On the other hand, since $g$ is a half-space solution, $\{g=0\}$ is a half-space passing through the origin.

So the only option is that $\{g=0\}=\{u=0\}=\{x_N \leq 0\}$, from which it follows that 
$\Delta (u-g)\equiv 0$ and $(u-g)|_{\{x_N \leq 0\}}=0$. By unique continuation this implies that $u \equiv g$, and since $g$ is constant in the directions orthogonal to $e^N$ we deduce that $u$ is cylindrical, a contradiction.\\
$\bullet$  {\bf Case 2.} {\it $g(x)=\frac12 x^TQx$ is a quadratic polynomial solution with $Q$ positive definite.}\\
Since $Q$ is positive definite, there exists a constant $c_0>0$ such that
$g\geq c_0$ on  $\partial B_1$. Hence, it follows from the local uniform convergence of $u_r$ to $g$ that $u(x)\geq \frac{c_0}{2}|x|^2$ for sufficiently large $|x|$.
This implies that the coincidence set $\{u=0\}$ is compact, so the result follows from \cite{FriedmanSakai}  (see also \cite{ellipsoid} or Remark~\ref{rem:compact case}).\\
$\bullet$ {\bf Case 3.} {\it $g(x)=\frac12 x^TQx$ is a quadratic polynomial solution with ${\rm ker}(Q) \neq {\{0\}}$.}\\
{\bf Step 1.} {\it $u$ is monotone in the directions of ${\rm ker}(Q)$.}\\
Let $e \in \partial B_1\cap {\rm ker}(Q)$. Then  $\partial_eu=\partial_e(u-g).$ 
Also, if we define $v_r(x):=\frac{(u-g)(rx)}{r^2}=u_r(x)-g(x)$, 
{
H\"older's inequality and Lemma \ref{lem:subharm}\eqref{cacc} imply that
$$ 
\biggl(\fint \limits_{B_1} |\nabla v_r|  \dx{x}\biggr)^2
\leq
\fint \limits_{B_1} |\nabla v_r|^2 \dx{x}
\leq C\fint \limits_{B_{2}} v_r^2 \dx{x}  \to 0 \text{ as } r \to \infty.$$
}
It follows that
\begin{equation}
\label{eq:ACF Deu}
\frac{1}r  \fint \limits_{B_r}|\partial_e u|\dx{x}=\frac{1}r  \fint \limits_{B_r}|\partial_e (u-g)|\dx{x}=\|\partial_e v_r\|_{L^1(B_1)} \to 0\qquad \text{as }r\to \infty.
\end{equation}
Thus, thanks to Remark~\ref{rem:Deu}, \eqref{eq:ACF Deu}, and Lemma~\ref{lem:ACF}\eqref{item:ACF L2 bound}-\eqref{item:ACF order}, we deduce that either $\partial_eu \geq 0$ or $\partial_eu \leq 0.$ Since $\partial_e u \not\equiv 0$ (as $u$ is non-cylindrical) and $\partial_e u$ is harmonic outside the coincidence set of $u$, it follows by the strong maximum principle that
\begin{equation}
\label{eq:strong monotone}
\text{either $\partial_eu > 0$ or $\partial_eu < 0$}\quad \text{ inside }\{u>0\}.
\end{equation}
{\bf Step 2.} {\it ${\rm ker}(Q)$ is one-dimensional.}\\
Indeed, assume by contradiction that there exists a two-dimensional plane $\Pi\subset {\rm ker}(Q)$. Then, by the argument above we deduce that 
$$
\text{for any $e \in \partial B_1\cap \Pi$,}\quad \eqref{eq:strong monotone} \text{ holds}.
$$
Fix a point $\bar x \in \{u>0\}$ and consider a curve $[0,1]\ni s\mapsto e(s)\subset \partial B_1\cap \Pi$ such that $e(0)=-e(1)$. Then, since $\partial_{e(0)}u(\bar x) =-\partial_{e(1)}u(\bar x)$, it follows by continuity that there exists $s \in (0,1)$ such that $\partial_{e(s)}u(\bar x)=0$. This contradicts \eqref{eq:strong monotone} and proves that ${\rm ker}(Q)$ is one-dimensional.\\
{\bf Step 3.} {\it $u$ is $x_N$-monotone.}\\
Since ${\rm ker}(Q)$ is one-dimensional, we may assume that ${\rm ker}(Q)=\R e^N$ and that $\partial_Nu \geq 0$.
Thus, since by assumption $\cC=\{u=0\}$ has non-empty interior,
$u$ satisfies Definition~\ref{def:solution}\eqref{item:cC_has_nonempty_interior}-\eqref{eq:monotone}-\eqref{PDE_asymptotics}.
Also, up to a translation, we can assume that $\cC\subset \{x_N\geq 0\}$ and that $0 \in \partial \cC$, so $u$ is $x_N$-monotone, as desired.
%
%
\end{proof}

\bibliographystyle{abbrv}
\bibliography{global-obstacle-0807}
\end{document}